\documentclass[11pt]{amsart}
\usepackage[margin=0.7in]{geometry}% Change the margins here if you wish.
 \usepackage{pst-node}
\usepackage{multirow}
\usepackage{float}
\usepackage{stmaryrd}
\usepackage{tikz}
\usepackage{quiver}
\usepackage{thmtools}
\usepackage{tikz}
\usepackage{xcolor}
\usetikzlibrary{arrows.meta,shapes.geometric}
\usetikzlibrary{shapes.geometric,positioning}



\usepackage{amsmath,amsthm,amssymb}
\usepackage{nicematrix}
\usepackage{graphicx}
\usepackage{euscript}
\usepackage{color}
\usepackage{enumerate}
\usepackage{multicol}
\usepackage{thmtools}
\usepackage{amsfonts}
\usepackage{bm}
\usepackage[colorinlistoftodos]{todonotes}
\usepackage{hyperref}
\usepackage{xcolor}
\hypersetup{
    colorlinks,
    linkcolor={blue!80!pink},
    citecolor={blue!80!pink},
    urlcolor={blue!100!black}
}
\usepackage[capitalise]{cleveref}
\usepackage{mathtools}
\usepackage[colorinlistoftodos]{todonotes}
\usepackage{verbatim}
\usepackage{pict2e}
\usepackage[T1]{fontenc}
\newtheorem{theorem}{Theorem}[section]
\newtheorem*{claim}{Claim}
\newtheorem{prop}[theorem]{Proposition}
\newtheorem{corollary}[theorem]{Corollary}
\newtheorem{defn}[theorem]{Definition}

\newtheorem{remark}[theorem]{Remark}
\newtheorem{lemma}[theorem]{Lemma}

\usepackage[english]{babel}
\newcommand{\doublearrow}[1]{\xrightarrow[]{#1}\mathrel{\mkern-14mu}\rightarrow}

\renewcommand{\url}[1]{\href{#1}{Available online}}
\makeatletter
\newcommand{\adjunction}[4]{%
  #1\colon #2%
  \mathrel{\vcenter{%
    \offinterlineskip\m@th
    \ialign{%
      \hfil$##$\hfil\cr
      \longrightharpoonup\cr
      \noalign{\kern-.3ex}
      \smallbot\cr
      \longleftharpoondown\cr
    }%
  }}%
  #3 \noloc #4%
}
\newcommand{\longrightharpoonup}{\relbar\joinrel\rightharpoonup}
\newcommand{\longleftharpoondown}{\leftharpoondown\joinrel\relbar}
\newcommand\noloc{%
  \nobreak
  \mspace{6mu plus 1mu}
  {:}
  \nonscript\mkern-\thinmuskip
  \mathpunct{}
  \mspace{2mu}
}
\newcommand{\smallbot}{%
  \begingroup\setlength\unitlength{.15em}%
  \begin{picture}(1,1)
  \roundcap
  \polyline(0,0)(1,0)
  \polyline(0.5,0)(0.5,1)
  \end{picture}%
  \endgroup
}
\makeatother

\usepackage{braket}
\usepackage{mathrsfs}

\newcommand{\colim} {\operatornamewithlimits{{colim}}}
\newcommand{\Sp}{\operatorname{Sp}}

\newcommand{\id}{\operatorname{id}}
\newcommand{\Ker}{\operatorname{Ker}}

\newcommand{\fib}{\operatorname{fib}}
\newcommand{\sphere}{\mathbb{S}}
\newcommand{\Pic}{\operatorname{Pic}}
\newcommand{\KU}{\operatorname{KU}}
\newcommand{\Map}{\operatorname{Map}}
\newcommand{\K}{\operatorname{K}}

\newcommand{\ROS}{\operatorname{RO}^{\sigma}}
\newcommand{\KS}{\mathbb{S}_{\K(1)}}

\newcommand{\ba}{\bar{\alpha}}
\newcommand{\res}{\operatorname{Res}_e^{C_p}}
\newcommand{\tr}{\operatorname{Tr}_e^{C_p}}

\newtheorem{maintheorem}{Theorem}

\title{The equivariant $\K(1)$-local sphere at odd primes}
\author{Pengkun Huang}
\date{\today}

\begin{document}

\maketitle
\begin{abstract}
    We compute the $\operatorname{RO}^{\sigma}(C_p)$-graded Mackey functors $\underline{\pi}_* L_{\KU_{C_p}/p}\sphere_{C_p}$.
\end{abstract}
\tableofcontents
\section{Introduction}
The chromatic viewpoint organizes stable homotopy theory by separating phenomena according to their height. At height one, the nonequivariant local sphere $L_{K(1)}S$ is well understood, and its homotopy groups were computed by Bousfield and Ravenel \cite{Ravenel1984Localization, Bousfield1979Localization}. Genuine equivariant stable homotopy theory introduces a second source of complexity: chromatic behavior may vary with the isotropy subgroup, and equivariant homotopy groups carry Euler-class, restriction, transfer, and representation-graded structure. Thus, even at height one, the equivariant local sphere contains information that has no nonequivariant analogue. A systematic account of this subgroup-dependent chromatic picture can be found in \cite{BehrensCarlisle2025}. 

Fix an odd prime $p$ and let $G=C_p$. Let $\KU_G$ denote genuine $G$-equivariant complex $K$-theory. We study the $C_p$-equivariant spectrum $L_{\KU_G/p}\sphere_{C_p}$. By \cite[A.4.12]{Balderrama2024TotalPowerOperations}, there is an equivalence
\begin{equation*}
    L_{\KU_{C_p}/p} \sphere_{C_p} \cong F({EC_p}_+, i_* \sphere_{\K(1)}),
\end{equation*}
where $i_*$ is the inflation functor, which equips a spectrum with the trivial $G$-action. Thus, its geometric fixed points are
\begin{equation*}
    (L_{\KU_{C_p}/p} \sphere_{C_p})^{\Phi e} \cong \sphere_{\K(1)}, \quad (L_{\KU_{C_p}/p} \sphere_{C_p})^{\Phi C_p} \cong \sphere_{\K(1)}^{tC_p},
\end{equation*}
where $\sphere_{\K(1)}^{tC_p}$ is a rational spectrum with homotopy groups concentrated in degree $0$ and $-1$, isomorphic to the cyclotomic field $Q_p$ \cite[Example I.2.3(iii)]{NikolausScholze2018}. In the notation of equivariant chromatic homotopy theory, one may equivalently identify this spectrum with $L_{K(e,1)} \sphere_{C_p}$. 

For $G=C_2$, the \(RO(C_2)\)-grading is sufficient for many calculations of genuine spectra. For example, Guillou–Isaksen \cite{guillou2024c_2} computed the $C_2$-equivariant stable stems in stems $0 \leq s \leq 25$ and $-1 \leq t \leq 7$. Combining the computation of $\mathbb{R}$-motivic stems and the comparison theorem by \cite{BelmontGuillouIsaksen2021Comparison} and \cite{BelmontIsaksen2022RealMotivicStems}, certain stems are calculated in coweight $8 \leq t \leq 11$.  At an odd prime, however, the \(RO(C_p)\)-graded homotopy groups alone do not detect equivalences. Hahn, Senger, and Wilson \cite{HSW23} introduced the spoke sphere \(S^\sigma\), defined as the cofiber of $ C_p{}_+ \longrightarrow S_{C_p}$, and the associated spoke graded homotopy groups, denoted by $\pi_{i,j}^{C_p} X$. The spoke-graded homotopy groups detect equivalences of genuine \(C_p\)-spectra, and therefore retain strictly more information than the single \(RO(C_p)\)-graded groups. Recent computations of the \(C_3\)-equivariant stable stems by Hou and Zhang illustrate the effectiveness of this grading in concrete calculations \cite{hou2025c3equivariantstablestems}. The present paper may be viewed as an all-degree calculation of the height-one contribution to the spoke-graded \(C_p\)-equivariant stable stems.

There have been several related calculations of equivariant $\KU_G$-local spheres. At the prime \(2\), Balderrama \cite{Balderrama2026C2K1LocalSphere} computed the \(RO(C_2)\)-graded Green functor of $ L_{\KU_{C_2}/2}S_{C_2}$. At odd primes, Carawan et al. \cite{carawan2023homotopy} computed the $\mathbb{Z}$-graded homotopy Mackey functor of the \(\KU_G\)-local sphere for odd-primary groups. More recently, Li \cite{Li2026KUGLocalSphere} computed the \(\mathbb Z\)-graded homotopy Mackey functors of the \(\KU_G\)-local sphere for finite abelian groups. In this paper, we study the finer spoke grading and determine the additive groups together with their Euler-class, restriction, and transfer structure.

\subsection*{Outline of the Calculation and Main Results} The starting point of the calculation is a translation from genuine equivariant homotopy groups to generalized cohomology groups. Let 
\begin{equation*}
    P_j = \Sigma^{\infty}(BC_p)_j^{\infty}
\end{equation*}
denote the $j$-th stunted lens spectrum, implicitly localized at $p$. For every spectrum Y, \cref{spoketocohomo} identifies the spoke graded homotopy groups (defined in (\ref{defofspoke})) as:
\begin{equation*}
    \pi_{i,j}^{C_p} F({EC_p}_+, i_*Y) \cong Y^{j-i}(P_j).
\end{equation*}
Consequently, the spoke-graded calculation for $L_{\KU_G/p} \sphere_{C_p}$ reduces to computing the $\sphere_{\K(1)}$-cohomology of the spectra $P_j$. When $j$ is odd, the calculation of cohomology groups come from its $\K(1)$-local decomposition. Let $\omega\in \mathbb{Z}_p^{\times}$ be a primitive $(p-1)$-st root of unity, and let $\sphere_{\K(1)}[\omega^r]$ denote the corresponding invertible $\K(1)$-local spectrum. We prove that (see \cref{decomposeatpositive} and \ref{decomposeatnegative}) for every $n\in \mathbb{Z}$,
\begin{equation*}
    L_{\K(1)}P_{2n+1} \cong \bigvee_{r=1}^{p-1} \sphere_{\K(1)}[\omega^r].
\end{equation*}
This decomposition is obtained from the \(p\)-local stable splitting of \(BC_p\), together with the action of Adams operations on \(p\)-complete complex \(K\)-theory. It immediately determines the cohomology of all odd tails and hence the odd spoke weights. The even tails are subtler. They fit into cofiber sequences \[ S^{2n} \longrightarrow P_{2n} \longrightarrow P_{2n+1}, \] so their cohomology is governed by extension problems between the cohomology of the odd tail and that of the bottom cell. These extensions are generally nontrivial. We determine them by studying the cellular attaching maps of stunted lens spectra and by computing the relevant Toda brackets in the homotopy groups of the \(K(1)\)-local sphere. This gives the complete additive answer in even spoke weights.
\begin{maintheorem}\label{thm:intro-additive}
   Let \(p\) be an odd prime. The spoke-graded homotopy groups \[ \pi_{i,j}^{C_p}L_{\KU_G/p}S_{C_p} \] are determined for every \(i,j\in\mathbb Z\). For odd spoke weight \(j=2k+1\), there is an isomorphism
   \begin{equation*}
       \pi_{i,2k+1} L_{\KU_G/p} \sphere_{C_p} \cong
          \left\{
        \begin{aligned}
           & \mathbb{Z}/p^{v_p(k-m)+1},  & \text{if } i = 2m , \text{ with } m \neq k;\\
           & \mathbb{Z}_p, & \text{ if } i= 2k \text{ or } 2k+1.\\
           & 0, & \text{otherwise.}
        \end{aligned}
        \right.
    \end{equation*}
    For even spoke weights, the answer is given explicitly in terms of \(v_p(k)\), \(v_p(m)\), and \(v_p(k-m)\), including all additive extension problems. The complete formulas are recorded in \cref{Main result}.
\end{maintheorem}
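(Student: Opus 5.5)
The plan is to treat the odd spoke weight case in full and to reduce the even weights to an extension problem. The first step is \cref{spoketocohomo}, applied with $Y=\sphere_{\K(1)}$. Since $L_{\KU_G/p}\sphere_{C_p}\simeq F({EC_p}_+, i_*\sphere_{\K(1)})$, it gives
\[
\pi_{i,2k+1}^{C_p}L_{\KU_G/p}\sphere_{C_p}\cong \sphere_{\K(1)}^{2k+1-i}(P_{2k+1}).
\]
Because the target is $\K(1)$-local, we may replace $P_{2k+1}$ by $L_{\K(1)}P_{2k+1}$. By \cref{decomposeatpositive} and \cref{decomposeatnegative}, this is $\bigvee_{r=1}^{p-1}\sphere_{\K(1)}[\omega^r]$. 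The computation therefore reduces to
\[
\bigoplus_{r=1}^{p-1}\pi_{i-2k-1}F(\sphere_{\K(1)}[\omega^r],\sphere_{\K(1)}).
\]

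The second step is to identify each summand as a sphere twisted by a character. I would use the equivalence $F(\sphere_{\K(1)}[\omega^r],\sphere_{\K(1)})\simeq \sphere_{\K(1)}[\omega^{-r}]$. The homotopy of $\sphere_{\K(1)}[\omega^s]$ is computed from the fiber sequence
\[
\sphere_{\K(1)}[\omega^s]\to KU_p \xrightarrow{\psi^g-\omega^s} KU_p,
\]
by the standard argument used for $L_{\K(1)}S$. On $\pi_{2m}KU_p=\mathbb{Z}_p$, the map $\psi^g-\omega^s$ acts by $g^m-\omega^s$. Its $p$-adic valuation is:
\begin{itemize}
\item $0$ unless $m\equiv s \pmod{p-1}$;
\item $v_p(m-s')+1$ when $m\equiv s$, where $s'$ is the relevant $p$-adic lift;
\item infinite when $g^m=\omega^s$ exactly, which gives kernel and cokernel $\mathbb{Z}_p$.
\end{itemize}
This yields a $\mathbb{Z}/p^{v+1}$ in odd degree $2m-1$, and a pair of $\mathbb{Z}_p$'s in degrees $0$ and $-1$ only for the "untwisted" summand.

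The third step is bookkeeping. The degree shift $i-2k-1$ has to be matched against these. I need to check how the twist index $r$ depends on $k$. The decomposition of $L_{\K(1)}P_{2k+1}$ is presumably indexed so that the summand $\sphere_{\K(1)}[\omega^r]$ carries a suspension or Bott twist by $2k$, absorbed via $\omega$-periodicity. The upshot should be that, summed over all $r$, exactly one summand contributes in each even degree $i=2m$. Its order is governed by $v_p(k-m)+1$: the valuation of $g^{m}-g^{k}$ in $\mathbb{Z}_p$, i.e.\ $v_p(g^{m-k}-1)=v_p(m-k)+1$. The degenerate case $m=k$ produces the two $\mathbb{Z}_p$'s in degrees $2k$ and $2k+1$, and all other degrees vanish. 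This matches the odd formula in \cref{thm:intro-additive}, provided one consistently uses cohomological degree $2k+1-i$ (which flips odd homotopy degree into even $i$).

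For even weight $j=2k$, I would apply $\sphere_{\K(1)}^*(-)$ to the cofiber sequence $S^{2k}\to P_{2k}\to P_{2k+1}$. This gives a long exact sequence between the odd-weight groups above and $\pi_*\sphere_{\K(1)}$. The connecting map is determined by the attaching map of the bottom cell, composed into the $\K(1)$-local splitting. I would compute it via its effect on $KU_p$-cohomology together with the Adams operations. The hard part is the resulting additive extension problems. These I would attack by identifying the extensions with Toda brackets $\langle \alpha, p^a, -\rangle$ in $\pi_*\sphere_{\K(1)}$, using the cellular structure of the stunted lens spectra. I expect this determination of extensions, in terms of $v_p(k)$, $v_p(m)$ and $v_p(k-m)$, to be the main obstacle.
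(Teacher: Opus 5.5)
\textbf{Odd spoke weights.} This half is correct and is the paper's argument: \cref{spoketocohomo}, then the splittings \cref{decomposeatpositive} and \cref{decomposeatnegative}, then the Adams-eigenvalue computation behind \cref{computationatodd}. You actually supply more detail than the paper does there. Your bookkeeping paragraph needs two corrections.
\begin{itemize}
\item The splitting $L_{\K(1)}P_{2k+1}\simeq\bigvee_{r=1}^{p-1}\sphere_{\K(1)}[\omega^r]$ does not depend on $k$, and there is no Bott twist to absorb. A twist by $\Sigma^{2k}$ would be $\sphere_{\K(1)}[g^{\pm k}]$, which is not an $\omega$-power for $k\neq0$. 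All of the $k$-dependence is the degree shift $\pi_{i-2k-1}$ from your first step.
\item The claim $v_p(g^{m-k}-1)=v_p(m-k)+1$ fails unless $(p-1)\mid(m-k)$. What is true is this: in the unique summand with $s\equiv m-k\pmod{p-1}$ one has $g^{m-k}-\omega^s=\omega^s(u^{m-k}-1)$, where $u=g\omega^{-1}$ topologically generates $1+p\mathbb{Z}_p$. This has valuation $v_p(m-k)+1$, so your ``$s'$'' is simply $0$.
\end{itemize}

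\textbf{Even spoke weights: the gap.} The statement asserts that these groups are determined, extensions included, by the formulas of \cref{Main result}. Your last paragraph is a plan that stops where the work begins. Write $j=2k$ with $k\neq0$ and $l=v_p(k)$. At least two nontrivial facts must be proved.
\begin{itemize}
\item In the long exact sequence for $\sphere^{2k}\to P_{2k}\to P_{2k+1}$, the boundary $\sphere_{\K(1)}^{2k}(\sphere^{2k})\cong\mathbb{Z}_p\to\sphere_{\K(1)}^{2k+1}(P_{2k+1})\cong\mathbb{Z}/p^{l+1}$ has image of order exactly $p$. This is the source of the $\mathbb{Z}/p^{v_p(k)}$ in $\pi_{-1,2k}$.
\item For $m\equiv0\pmod{p-1}$, $m\neq0,k$ (and analogously for $m=k$), you need the class of $0\to\mathbb{Z}/p^{v_p(k-m)+1}\to\pi_{2m-1,2k}\to\mathbb{Z}/p^{v_p(m)+1}\to0$. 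This is what produces the dichotomy $v_p(k)>v_p(m)$ versus $v_p(k)\le v_p(m)$.
\end{itemize}

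The paper gets both from inputs your proposal lacks.
\begin{itemize}
\item The bottom cell of $P_{2k}$ is co-attached by $\ba_{l+1}$ (\cref{attachcell}). The proof reduces, via Gonz\'alez's James-periodicity classification, to $q\mid 2k$ and then to the $B\Sigma_p$ summand. Hence $P_{2k}^{2k+(l+1)q}$ maps to $\Sigma^{2k}C\ba_{l+1}$.
\item It computes $\sphere_{\K(1)}^*(C\ba_{l+1})$ using the specific bracket $\langle\ba_a,\alpha_b,p\rangle=\ba_{a+b}$ modulo $p$ for $b\neq0$ (\cref{Mark-toda}, proved with $v_1$-self maps of $\sphere_{\K(1)}/p$).
\item It pulls the finite-stage class $[p^l]$ back along the induced map of $\operatorname{Ext}^1$-groups. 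The class for $P_{2k}$ is not unique, but all preimages have the same valuation, so the middle group is determined (\cref{finite2ncoskeleton}, \cref{P2ncalc}).
\end{itemize}
Saying you would ``identify the extensions with Toda brackets $\langle\alpha,p^a,-\rangle$'' does not say which bracket, why it is nonzero, or how a finite-stage extension controls the one for $P_{2k}$.

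\textbf{A possible bracket-free route.} Your remark about $\KU_p$ and $\psi^g$ could in fact be pushed to a complete argument. Since $\KU_p^*(P_{2k})\cong\KU_p^{*-2k}({BC_p}_+)$ is free and concentrated in even degrees, $\sphere_{\K(1)}^{2d+1}(P_{2k})$ is exactly the cokernel of $\psi^g-1$ on $\KU_p^{2d}(P_{2k})$, so extensions come for free. You would then have to compute $\psi^g$ through the Thom isomorphism, where it is twisted by the cannibalistic class of $k\lambda$. You would also need the Smith normal form of the resulting $p\times p$ matrix, which the proposal does not do either.
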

The even-weight answer is more complicated than the odd-weight answer. In particular, when \(j=2k\), the groups may contain both free \(\mathbb Z_p\)-summands and nontrivial extensions between finite cyclic \(p\)-groups. Determining these extensions is an essential part of the calculation rather than a consequence of the associated graded groups. Our second main result determines the genuine equivariant structure on these groups.

\begin{maintheorem}
    \label{thm:intro-structure} 
    The equivariant structure maps $a_\sigma, \res, \tr $ on \(\pi_{\star,\star}^{C_p}b(S_{K(1)})\) are determined in every bidegree. These maps are described explicitly, up to the indicated choices of generators, in Section \ref{equivariantstructure}. 
    
    In particular, all restriction maps and transfer maps in odd spoke weights are zero except in bidegrees
      $ (rq-1,rq-1), r\neq 0$ 
      and
    $ (0,2k+1), k\neq 0$
    respectively. And in both cases, their image is a cyclic subgroup of order $p$.
    \end{maintheorem}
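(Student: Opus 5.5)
Via \cref{spoketocohomo}, the plan is to recast all three structure maps as maps of stunted lens spectra and compute them in $\sphere_{\K(1)}$-cohomology. The spoke groups are
\[
\pi_{i,j}^{C_p}F({EC_p}_+,i_*\sphere_{\K(1)})\cong \sphere_{\K(1)}^{j-i}(P_j).
\]
\begin{itemize}
\item The Euler class $a_\sigma$ lowers spoke weight by one. Under the identification it should be induced by the inclusion $P_{j+1}\to P_j$ of stunted lens spectra, coming from the cofiber sequence $C_{p+}\to S^0\to S^\sigma$ smashed with ${EC_p}_+$.
\item Restriction to the trivial subgroup corresponds to restricting along ${C_p}_+\to S^0$. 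In lens-spectrum terms it is the map $P_j\to S^{j}$ collapsing onto the bottom cell, which exists when $j$ is even, or its appropriate analogue.
\item Transfer is dual to restriction. It is induced by $S^{j-1}\to P_j$, or by the Becker--Gottlieb/Kahn--Priddy transfer $P_j\to S^{j-1}$, depending on parity.
\end{itemize}
The first step is to pin these identifications down carefully, including the degree shifts. I would do this by tracking $S^{\sigma}\wedge {EC_p}_+$ and its cell structure.

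Odd spoke weights come next. By \cref{decomposeatpositive} and \cref{decomposeatnegative}, $L_{\K(1)}P_{2n+1}$ splits as $\bigvee_r\sphere_{\K(1)}[\omega^r]$. Restriction and transfer then become maps between $\pi_*\sphere_{\K(1)}$ and $\pi_*$ of a single summand, namely the eigenspace carrying the bottom cell. The $\omega^r$-twisted spheres have homotopy concentrated where the Adams operation acts compatibly. Consequently the map $\sphere_{\K(1)}^{*}(S^{2k})\to \sphere_{\K(1)}^{*}(P_{2k+1})$, or its reverse, is nonzero only in degrees of the form $rq-1$ with $q=2(p-1)$, and at the $\mathbb{Z}_p$ classes in bidegree $(0,2k+1)$. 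In those bidegrees I would compute the image via the $e$-invariant or $J$-homomorphism description of $\pi_{rq-1}\sphere_{\K(1)}$. I expect the image to be reduction mod $p$ of the generator, because the attaching degree-$p$ cell of $P$ contributes exactly a factor of $p$. This would give the stated cyclic subgroup of order $p$.

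Even spoke weights use the cofiber sequence $S^{2n}\to P_{2n}\to P_{2n+1}$ and the extension classes already determined in \cref{Main result}. Restriction and transfer factor through the bottom cell $S^{2n}$, so they are read off as the boundary and inclusion maps of the long exact sequence. The Euler class $a_\sigma$ is the composite of the maps $P_{j+1}\to P_j$ in the long exact sequence. I would choose generators compatibly with the extensions already fixed.

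I expect the main obstacle to be the even weights. There the extension problems make the effect of $a_\sigma$ and of the transfer depend on the specific generators chosen. Detecting whether $a_\sigma$ hits $p$-multiples or generators requires the same Toda bracket computations $\langle p,\alpha_{1},-\rangle$ in $\pi_*\sphere_{\K(1)}$ that were used to solve the extensions. I would try first to compute everything after mapping to $\KU_p^\wedge$-cohomology, where the maps are explicit on Adams-operation eigenvectors, and then descend to $\sphere_{\K(1)}$ via the fiber sequence $\sphere_{\K(1)}\to \KU_p\xrightarrow{\psi^g-1}\KU_p$.
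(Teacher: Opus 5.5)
Your proposal has a genuine gap in odd spoke weight, which is exactly the ``in particular'' clause. There the group on the other side of $\res$ and $\tr$ is neither $\pi_*\sphere_{\K(1)}$ nor a single eigen-summand of $L_{\K(1)}P_{2k+1}$. It is $\pi_{i-1}\Map(C_\sigma,\sphere_{\K(1)})\cong\pi_i\sphere_{\K(1)}\otimes\mathcal{Q}$, with $\mathcal{Q}=\mathbb{Z}[C_p]/(1+\sigma+\cdots+\sigma^{p-1})$ and a residual $C_p$-action. The paper's key input is that $\res$ lands in the $C_p$-invariants, which are $\pi_i(\sphere_{\K(1)})[p]$, and that $\tr$ factors through the coinvariants, which are $\pi_i(\sphere_{\K(1)})/p$. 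This bounds both images by order $p$ and kills maps between free modules, which degree or Adams-eigenvalue bookkeeping cannot do. Take $\tr\colon\pi_0\sphere_{\K(1)}\otimes\mathcal{Q}\to\pi_{0,1}\cong\mathbb{Z}_p$ and $\res\colon\pi_{-1,-1}\cong\mathbb{Z}_p\to\pi_{-1}\sphere_{\K(1)}\otimes\mathcal{Q}$. Both go between nonzero free $\mathbb{Z}_p$-modules, and after $\K(1)$-localization each is governed by an element of $\pi_{-1}\sphere_{\K(1)}\cong\mathbb{Z}_p$, so nothing about degrees forces vanishing, yet both maps are zero. Your ``degree-$p$ cell'' heuristic can be made rigorous for $\res$ alone, though you do not say how. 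The restriction is induced by $\Sigma^{2k}C_\sigma\to P_{2k+1}$, which factors through the bottom cell by cellular approximation, and $P_{2k+1}^{2k+2}\simeq\Sigma^{2k+1}\sphere/p$ forces the image into $\pi_i(\sphere_{\K(1)})[p]$. This has no evident dual for $\tr$, which is induced by a map out of $P_{2k+1}$. You also misplace the transfer. For $k\neq0$, $\pi_{0,2k+1}\cong\mathbb{Z}/p^{v_p(k)+1}$ is torsion, not a $\mathbb{Z}_p$ class, and the image of $\tr$ is the subgroup $p^{v_p(k)}\mathbb{Z}/p^{v_p(k)+1}$, not the reduction of a generator. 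At $(0,1)$, where the group really is $\mathbb{Z}_p$, the transfer is zero.

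Nonvanishing also needs an actual argument, and the $e$-invariant remark is not one. The paper uses the exactness relations $\Ker(\res)=\operatorname{Im}(\pi_{i-1,2k}F(\sphere^\sigma\wedge\sphere^\sigma,X)\to\pi_{i,2k+1}X)$ and $\operatorname{Im}(\tr)=\Ker(\pi_{i,2k+1}X\to\pi_{i-1,2k}F(\sphere^\sigma\wedge\sphere^{-\sigma},X))$, together with the splittings of \cref{Ssigma}:
\begin{itemize}
\item Since $\pi_{rq}\sphere_{\K(1)}=0$ for $r\neq0$, $\Ker(\res)$ at $(rq-1,rq-1)$ is the image of $a_\sigma\colon\pi_{rq,rq}\to\pi_{rq-1,rq-1}$, which is $p\mathbb{Z}_p$.
\item Since the extra summands $(\pi_{-1}\sphere_{\K(1)})^{p-2}$ are torsion-free, $\operatorname{Im}(\tr)$ at $(0,2k+1)$ is $\Ker(a_\sigma\colon\pi_{0,2k+1}\to\pi_{-1,2k})$. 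This has order $p$ because the bottom cell of $P_{2k}$ is coattached by $\ba_{v_p(k)+1}$ (\cref{attachcell}).
\end{itemize}
So the odd-weight claim rests on the even-weight $a_\sigma$ computations, a link your plan never makes. Your identifications are also reversed in ways that matter when reading maps off long exact sequences:
\begin{itemize}
\item $a_\sigma$ is induced by the bottom-cell collapse $P_{j-1}\to P_j$; there is no inclusion $P_{j+1}\to P_j$.
\item The even-weight restriction is induced by the bottom-cell inclusion $\sphere^{2k}\to P_{2k}$, not by a collapse onto the bottom cell, which does not split off for $k\neq0$.
\item The even-weight transfer is induced by the connecting map $P_{2k}\to\Sigma\sphere^{2k-1}$ of $\sphere^{2k-1}\to P_{2k-1}\to P_{2k}$, which is why $\operatorname{Im}(\tr)=\Ker(a_\sigma)$ there.
\end{itemize}
With these fixed, no new Toda brackets are needed. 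The paper gets the even-weight $a_\sigma$ maps from the already-solved extensions and from the fact that $P_{2k-1}\to P_{2k+1}$, whose fiber is $\Sigma^{2k-1}\sphere/p$, induces multiplication by $p$. It then pins down units using $\res\circ\tr=p$.
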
 
 Together, Theorems~\ref{thm:intro-additive} and \ref{thm:intro-structure} determine the spoke-graded homotopy Mackey functor structure of \(L_{\KU_{C_p}/p}S_{C_p}\).
 
Finally, we reinterpret the cohomological Atiyah--Hirzebruch spectral sequence as a \(\lambda\)-torsion Bockstein spectral sequence associated to the skeletal tower of \(P_{2n+1}\). In this formulation, a class supports a differential of length \(r\) precisely when its image in the filtered object is \(\lambda^{r-1}\)-divisible but not \(\lambda^r\)-divisible. The finite-stage calculations of the \(S_{K(1)}\)-cohomology of stunted lens spectra determine this divisibility and therefore force all the differentials in Theorem~\ref{thm:intro-ahss}. 

 \begin{maintheorem}\label{thm:intro-ahss} 
 Let $q=2(p-1)$. For every \(n\in\mathbb Z\), all differentials and permanent cycles in the cohomological Atiyah--Hirzebruch spectral sequence computing \[ S_{K(1)}^*(P_{2n+1}) \] are determined. The nonzero higher differentials occur in two families, whose lengths are governed by \(p\)-adic valuations and are of the form \[ \left(v_p(k)+1\right)q \qquad\text{and}\qquad \left(v_p(k)+1\right)q+1. \] A complete description of their sources, targets, and surviving classes is given in \cref{BCPAHSS}.
\end{maintheorem}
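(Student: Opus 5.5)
The plan is to pin down the spectral sequence by playing the known abutment against the known $E_2$-page. The differential lengths will then come from the $\lambda$-Bockstein reformulation described before the statement.

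\textbf{The two ends.} Let $q=2(p-1)$. The $E_2$-page is
$$E_2^{s,t}=H^s\bigl(P_{2n+1};\pi_{-t}\sphere_{\K(1)}\bigr).$$
Since $P_{2n+1}$ is $p$-local with one cell in each dimension $\geq 2n+1$, the cellular cochain complex has differentials alternating between $0$ and $p$. So $H^s(P_{2n+1};\mathbb{Z}_p)$ is $\mathbb{Z}/p$ in each even $s>2n+1$ and vanishes otherwise. With coefficients $\mathbb{Z}/p^{a}$ one gets $\mathbb{Z}/p$ in every degree $s\ge 2n+1$.

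We also use $\pi_*\sphere_{\K(1)}$, recalled earlier:
\begin{itemize}
\item $\mathbb{Z}_p$ in degrees $0$ and $-1$;
\item $\mathbb{Z}/p^{v_p(k)+1}$ in degree $kq-1$ for $k\neq 0$;
\item zero otherwise.
\end{itemize}
The abutment is known exactly from \cref{decomposeatpositive} and \ref{decomposeatnegative}. Since $L_{\K(1)}P_{2n+1}\simeq\bigvee_{r=1}^{p-1}\sphere_{\K(1)}[\omega^r]$, the groups $\sphere_{\K(1)}^*(P_{2n+1})$ are direct sums of the homotopy of the eigenspectra $\sphere_{\K(1)}[\omega^r]$. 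Each of these is again concentrated in degrees congruent to $0,-1$ modulo $q$ up to a shift determined by $r$. Their orders are $p^{v_p(\cdot)+1}$, and there are $\mathbb{Z}_p$'s only in total degrees $0$ and $1$.

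\textbf{Sparseness and the first differential.} Because the coefficients are sparse (nonzero only in degrees $\equiv 0,-1 \bmod q$), a differential changes $t$ by $r-1$. Hence $d_r$ can be nonzero only for $r\equiv 0$ or $1 \bmod q$, which already gives the two families of lengths $mq$ and $mq+1$.

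The first nontrivial differentials $d_q$ and $d_{q+1}$ are read off from the attaching maps of $P_{2n+1}$. The cells $2j$ and $2j+q-1$ (and their neighbours) are attached by $\alpha_1$ precisely when the relevant binomial coefficient, equivalently the $p$-adic unit $k$ indexing the James periodicity, is prime to $p$. This matches the action of $\alpha_1$ on $\pi_*\sphere_{\K(1)}$.

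\textbf{Higher differentials via the Bockstein reformulation.} Filter $P_{2n+1}$ by its skeleta $P_{2n+1}^{2n+N}$. The finite-stage groups $\sphere_{\K(1)}^*(P_{2n+1}^{2n+N})$ were computed earlier in the paper from the cofiber sequences $S^{2m}\to P_{2m}\to P_{2m+1}$ and the Toda bracket analysis. In the filtered-object language, a class supports a differential of length $r$ exactly when its image is $\lambda^{r-1}$-divisible but not $\lambda^r$-divisible. So for a class in filtration $s$ and a target cell at distance $r$, I compute whether the class survives to the $(s+r-1)$-skeleton but dies on the $(s+r)$-skeleton.

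Explicitly, the order of the image of $\sphere_{\K(1)}^*(P_{2n+1})\to\sphere_{\K(1)}^*(P_{2n+1}^{2n+N})$ jumps by a factor of $p$ exactly when $N$ passes a cell at which a differential hits. Comparing these jumps with the known orders $p^{v_p(k)+1}$ of the abutment forces the lengths $(v_p(k)+1)q$, with the neighbouring parity giving $(v_p(k)+1)q+1$. The counting then identifies which classes are permanent cycles: for each $k$, exactly $v_p(k)+1$ copies of $\mathbb{Z}/p$ on the $E_\infty$-page assemble to $\mathbb{Z}/p^{v_p(k)+1}$.

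\textbf{Main obstacle.} I expect the hardest step to be this divisibility bookkeeping. One must show that the $E_\infty$ filtration jumps occur at exactly the predicted cells, and not merely that the total orders match. A counting argument alone does not exclude several shorter differentials conspiring. To rule this out I would first use naturality along $P_{2n+1}\to P_{2n+1+q}$ and the $q$-fold periodicity in $n$ (the James periodicity matches the $p$-adic Adams-operation eigenvalues). This reduces the claim to the cases where $v_p(k)$ is smallest, which can then be settled by induction on $v_p(k)$.
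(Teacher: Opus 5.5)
\textbf{Verdict: genuine gap.} The gap sits at exactly the step that determines the differentials.

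You state the right criterion ($\lambda^{r-1}$- but not $\lambda^{r}$-divisibility in the skeletal tower), but you never evaluate it. Instead you compare the orders of the images $\sphere_{\K(1)}^*(P_{2n+1})\to\sphere_{\K(1)}^*(P^{2n+N}_{2n+1})$ with the abutment. Those images only record the filtration of the abutment, i.e.\ the $E_\infty$-page: the order jumps at filtrations carrying surviving classes, not where a differential lands. So, as you concede, they cannot distinguish one long differential from several shorter ones.

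Your proposed repair does not close this. There is no $q$-fold periodicity in $n$ to exploit. $L_{\K(1)}P_{2n+1}$ is independent of $n$, but the cell filtration is not, and the lengths depend on $v_p(lq-2n-2)$, which is not $q$-periodic in $n$. James periodicity for $P^{2n+N}_{2n}$ has period $p^{\lfloor N/q\rfloor}$ and preserves exactly the valuations the skeleton can see. So it gives neither a reduction to small $v_p(k)$ nor a base case for the induction you sketch.

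The missing idea is to compute divisibility directly for bottom-cell classes. There $\operatorname{F}_sX_\star\to X_s$ is the identity, so $\lambda^N$-divisibility of $\alpha_l[2n+1]$ just means lying in the image of $\sphere_{\K(1)}^*(P^{2n+1+N}_{2n+1})\to\sphere_{\K(1)}^*(\sphere^{2n+1})$.
\cref{Xinoextension} gives these finite-skeleton groups. The maps between successive $q$-blocks are surjections $\mathbb{Z}/p^{a+1}\to\mathbb{Z}/p^{a}$ until the order saturates at $p^{r}$, with $r=v_p(lq-2n-2)+1$, and multiplication by $p$ afterwards. Hence $\alpha_l[2n+1]$ is $\lambda^{(r-1)q+1}$- but not $\lambda^{rq+1}$-divisible, and sparseness forces $d_{rq+1}(\alpha_l[2n+1])=\ba_{l+r}[2n+2+rq]$. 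Naturality along $P_{2n+1}\to P_{2n+1+k}$, which is injective on $E_2$ when the new bottom cell is odd, then transports this to classes on higher odd cells.

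The other family, $d_{(v_p(k)+1)q}(1[2k])=\ba_{v_p(k)+1}[2k+(v_p(k)+1)q]$, likewise does not come from counting. It is \cref{attachcell}, which identifies the co-attaching map of the $2k$-cell as the $J$-class $\ba_{v_p(k)+1}$. Your attaching-map paragraph only covers the $\alpha_1$ case $p\nmid k$.

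A smaller slip: sparseness of $\pi_*\sphere_{\K(1)}$ permits $r\equiv 0,1,2\pmod q$, not just $0,1$. The $r\equiv 2$ candidates run from $\pi_{lq-1}$-classes with $l\le 0$ into the classes $1[2m]$, and they must be excluded separately. For $m\neq 0$ this holds because $1[2m]$ itself supports a differential.
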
  
This spectral-sequence calculation recovers the odd-weight part of Theorem~\ref{thm:intro-additive}, but it also gives more refined information: it identifies the precise filtration in which each class appears and explains the differential lengths through a divisibility criterion. 

\section*{Acknowledgment}
The author is deeply grateful to Mark Behrens for all the helpful discussions and especially for explaining the computations of $\K(1)$-local Toda brackets. The author would also like to thank Sihao Ma, Shangjie Zhang and Yingxin Li for helpful conversations.

\section*{Notation}
Throughout the paper, we use the following notation:
\begin{itemize}
    \item For $X \in \Sp$, we write $b(X)$ to denote the $C_p$-genuine spectrum $F({EC_p}_+, i_*X)$.
    \item We fix $p$ as an odd prime.
    \item We fix $g$ to be an integer such that it is primitive modulo $p^2$, which means it is a topological generator of $\mathbb{Z}_p^{\times}$.
    \item We fix $\omega$ to be a primitive $(p-1)$-th root of unity in $\mathbb{Z}_p$ such that $g \equiv \omega$ mod $p$.
    \item Let $a\in \mathbb{Z}_p^{\times}$. We use $\sphere_{\K(1)}[a]$ to denote a $\K(1)$-local spectrum in the Picard group $\Pic_1$ of $\K(1)$-local spectra, characterized by the fact that $\pi_* (\KU_p \otimes\sphere_{\K(1)}[a])^{\wedge}_p\cong \pi_*{\KU_p}$ and the Adams operation $\psi^g$ acting on $\pi_0 (\KU_p \otimes\sphere_{\K(1)}[a])^{\wedge}_p \cong \mathbb{Z}_p$ as the scalar multiplication by $a$ \cite[Proposition 2.1]{hopkins1994constructions}.
    \item  We fix $q=2(p-1)$ as the degree of $v_1$.
    \item We use $P_j$ to denote the spectrum $\Sigma^{\infty}(BC_p)_j^{\infty}$.
    \item We adopt the conventions $v_p(0)=\infty$ and $\mathbb{Z}/p^0=0$.
 \end{itemize}

\section{Preliminaries and Equivariant Background}

We recall some facts about $C_p$-equivariant homotopy theory and fix notation. Genuine equivariant homotopy groups are typically $RO(G)$-graded. The group $C_p$ has one trivial representation and $(p-1)/2$ two-dimensional irreducible representations coming from rotations on $\mathbb{C}$ by non-trivial $p$-th roots of unity. Because the non-trivial representation spheres are all equivalent $p$-locally (e.g. \cite{HHR17}), we can use a two-parameter grading generated by $1$ and $\lambda$. 

The motivation to introduce the spoke grading is that an isomorphism on $RO(G)$-graded homotopy groups is not enough to determine a weak equivalence between genuine $G$-spectra if $G\neq C_2$. To fix this issue, we have the following definition

\begin{defn} \cite{HSW23}
    The spoke sphere $\sphere^{\sigma}$ is defined by the cofiber sequence
    \begin{equation*}
        {C_p}_+ \rightarrow \sphere_{C_p} \stackrel{a_{\sigma}}{\longrightarrow} \sphere^{\sigma}
    \end{equation*} 
    We also define $S^{-\sigma}$ as its Spanier-Whitehead dual in $C_p$-genuine spectra.
\end{defn}
The spoke sphere has the following basic properties:
\begin{prop} \label{Ssigma}From \cite[section 5]{HSW23} and \cite[A.2]{angelini2025spoke}, there are noncanonical equivalences
\begin{equation*}
    \sphere^{\sigma} \wedge \sphere^{-\sigma}\cong \sphere \oplus \bigvee_{p-2} \ {C_p}_+, \quad \sphere^{\lambda} \wedge \sphere^{-\sigma} \cong \sphere^{\sigma}. 
\end{equation*}
These equivalences imply 
\begin{equation*}
\sphere^{\sigma} \wedge \sphere^{\sigma}\cong \sphere^{\lambda} \oplus \bigvee_{p-2} \Sigma^2 {C_p}_+
\end{equation*}
\end{prop}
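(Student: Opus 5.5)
The first two equivalences are quoted from \cite[Section 5]{HSW23} and \cite[A.2]{angelini2025spoke}; what needs an argument is the final one, so I take those two as given and deduce it. The plan is to write $\sphere^{\sigma}\wedge\sphere^{\sigma}$ as $\sphere^{\lambda}\wedge\sphere^{-\sigma}\wedge\sphere^{\sigma}$ and then distribute. Concretely, smashing the equivalence $\sphere^{\lambda}\wedge\sphere^{-\sigma}\cong\sphere^{\sigma}$ with $\sphere^{\sigma}$ gives
\begin{equation*}
\sphere^{\sigma}\wedge\sphere^{\sigma}\cong \sphere^{\lambda}\wedge\bigl(\sphere^{-\sigma}\wedge\sphere^{\sigma}\bigr).
\end{equation*}
The first equivalence then identifies the bracketed term with $\sphere\oplus\bigvee_{p-2}{C_p}_+$. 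Since smashing with $\sphere^{\lambda}$ commutes with finite wedges, we get
\begin{equation*}
\sphere^{\sigma}\wedge\sphere^{\sigma}\cong \sphere^{\lambda}\oplus\bigvee_{p-2}\bigl(\sphere^{\lambda}\wedge {C_p}_+\bigr).
\end{equation*}

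It remains to identify $\sphere^{\lambda}\wedge{C_p}_+$. By the shearing (untwisting) isomorphism $G_+\wedge X\cong G_+\wedge i^*X$, this is ${C_p}_+\wedge\operatorname{Res}_e^{C_p}\sphere^{\lambda}$. The restriction of $\lambda$ to the trivial group is the two-dimensional trivial representation, so $\operatorname{Res}_e^{C_p}\sphere^{\lambda}\simeq \sphere^{2}$. Hence $\sphere^{\lambda}\wedge{C_p}_+\cong\Sigma^2{C_p}_+$, which gives the stated decomposition.

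There is no serious obstacle here, given the two cited equivalences. The only points needing care are that these equivalences are noncanonical, so the resulting splitting is too, and that we use the $p$-local identification of all nontrivial $\lambda$'s (as in \cite{HHR17}), so that $\sphere^{\lambda}$ is unambiguous in the statement.
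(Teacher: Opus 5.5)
Your proposal is correct and is the deduction the paper intends by ``These equivalences imply.'' The paper gives no further argument and takes the first two equivalences from the cited sources, just as you do. From there, rewriting one factor as $\sphere^{\lambda}\wedge\sphere^{-\sigma}$, applying the splitting of $\sphere^{\sigma}\wedge\sphere^{-\sigma}$, and using $\sphere^{\lambda}\wedge{C_p}_+\simeq\Sigma^2{C_p}_+$ (by shearing) is exactly what is needed.
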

With the spoke spheres, we adopt the following notation
\begin{equation} \label{defofspoke}
    \pi^{C_p}_{i,j}(X) \colon=
    \left\{  
             \begin{aligned}
                &\pi_0 \Map_{C_p}(\sphere^{i-j+k\lambda} , X), & \quad j=2k \\
                 &\pi_0 \Map_{C_p}(\sphere^{i-j+k\lambda+\sigma} , X), &\quad j=2k+1 \\
             \end{aligned}   
\right.
\end{equation}
The spoke-graded homotopy groups are sometimes called the $\ROS(G)$-graded homotopy groups.
A crucial feature of the spoke grading is that it is enough to detect weak equivalences:
\begin{prop} \cite[Proposition $3.1$]{hou2025c3equivariantstablestems}
    A map $f: X\rightarrow Y$ in $\Sp^{C_p}$ is a weak equivalence if and only if it induces an isomorphism on the spoke graded homotopy groups.
\end{prop}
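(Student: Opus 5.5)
The plan is to use the isotropy-separation (geometric fixed points) criterion for equivalences of genuine $C_p$-spectra. A map $f\colon X\to Y$ in $\Sp^{C_p}$ is an equivalence if and only if both $f^e$ (the underlying map) and $f^{\Phi C_p}$ are equivalences of spectra. The forward direction of the statement is trivial, since an equivalence induces isomorphisms on all $\pi_0\Map_{C_p}(W,-)$. So I will assume $f$ induces isomorphisms on all $\pi^{C_p}_{i,j}$ and show that $f^e$ and $f^{\Phi C_p}$ are equivalences. Replacing $f$ by its fiber $F$, the hypothesis becomes $\pi^{C_p}_{i,j}F=0$ for all $i,j$, and it suffices to prove $F\simeq 0$.

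\textbf{Step 1: underlying homotopy.} Apply $\Map_{C_p}(-,F)$ to the defining cofiber sequence ${C_p}_+\to \sphere\to\sphere^\sigma$, suspended by $\Sigma^{n}$. This gives a long exact sequence
\[
\cdots\to\pi_0\Map(\Sigma^n\sphere^\sigma,F)\to\pi_n^{C_p}F\to\pi_n^eF\to\pi_0\Map(\Sigma^{n-1}\sphere^\sigma,F)\to\cdots.
\]
Here I use $\Map_{C_p}({C_p}_+\wedge \sphere^n,F)\simeq \Map(\sphere^n,F^e)$.

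The groups $\pi_0\Map(\Sigma^n\sphere^\sigma,F)$ are spoke groups of odd weight $j=1$, namely $\pi_{n+1,1}$. The groups $\pi_n^{C_p}F$ are the spoke groups $\pi_{n,0}$. Both vanish by hypothesis, so $\pi_*F^e=0$ and $F^e\simeq0$.

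\textbf{Step 2: geometric fixed points.} Since $F^e\simeq0$, the spectrum $F$ is concentrated over the isotropy $C_p$. Hence $F\simeq \widetilde{EC_p}\wedge F$, and $\pi_n^{C_p}F\cong\pi_nF^{\Phi C_p}$. This already follows from the weight-$0$ groups, which are the integer-graded fixed points. So $F^{\Phi C_p}\simeq 0$, and therefore $F\simeq 0$.

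\textbf{Main obstacle.} The key point is that the weight-$0$ groups alone do not give Step 1: recovering the underlying homotopy genuinely needs the spoke sphere $\sphere^\sigma$, whose cofiber sequence involves the free orbit ${C_p}_+$. By contrast, $\sphere^\lambda$ alone would only give cofibers involving ${C_p}_+\wedge S^1$ with a nontrivial attaching map. This is exactly what the $\sigma$-grading provides, and it is where the noncanonical splittings of \cref{Ssigma} are avoided: only the definition of $\sphere^\sigma$ is needed.

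One subtlety to check is that the indexing convention (\ref{defofspoke}) with $k=0$ and $j=1$ really yields $\Map(\Sigma^{n}\sphere^\sigma,F)$ for all $n\in\mathbb Z$. Since $i$ ranges over all integers, it does.
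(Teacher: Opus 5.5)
Your proof is correct: the long exact sequence of $\Sigma^n{C_p}_+\to\sphere^n\to\Sigma^n\sphere^\sigma$ traps $\pi_n^eF$ between $\pi_{n,0}F$ and $\pi_{n,1}F$, and the vanishing of $\pi_*^eF$ and of $\pi_*^{C_p}F=\pi_{*,0}F$ forces $F\simeq 0$. Your isotropy-separation Step 2 works, though it could be replaced by the fact that the suspensions of ${C_p}_+$ and $\sphere$ generate $\Sp^{C_p}$; your argument also shows that only the weights $j=0,1$ are needed. The paper gives no argument of its own here, only the citation to Hou--Zhang, and your argument is the standard proof of that cited fact.
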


For $Y \in \Sp$, we define $b(Y)$ to be the Borel-complete genuine $C_p$-spectrum $F({EC_p}_+ , i_*Y)$, where $i_*$ is the inflation functor from $\Sp$ to $\Sp^{C_p}$. The spoke-graded homotopy groups of $b(Y)$ are as follows:
\begin{prop} \label{spoketocohomo}
    We have an isomorphism
    \begin{equation*}
        \pi_{i,j} b(Y) \cong Y^{j-i}(P_j),
    \end{equation*}
    where $P_j = \Sigma^{\infty} ({BC_p})_{j}^{\infty}$.
\end{prop}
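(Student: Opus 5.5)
We reduce the equivariant mapping spectrum out of a representation sphere into a Borel-complete spectrum to a non-equivariant mapping spectrum out of a homotopy orbit, and then identify that homotopy orbit with a Thom spectrum over $BC_p$, i.e. a stunted lens spectrum.

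\textbf{Step 1: reduction to homotopy orbits.} For any genuine $C_p$-spectrum $V$, the free--forgetful adjunction gives
\[
\Map_{C_p}(V, F({EC_p}_+, i_*Y)) \simeq \Map_{C_p}(V\wedge {EC_p}_+, i_*Y) \simeq \Map(V_{hC_p}, Y),
\]
where $V_{hC_p}=(EC_p{}_+\wedge V)/C_p$. The last equivalence holds because maps from a free $C_p$-spectrum into an inflated spectrum factor through orbits. Hence
\[
\pi_{i,j} b(Y) = \pi_0\Map\bigl((S^{V_{i,j}})_{hC_p}, Y\bigr),
\]
where $V_{i,j}$ is the virtual spoke representation in (\ref{defofspoke}).

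\textbf{Step 2: even weight $j=2k$.} Here $V=S^{i-2k}\wedge S^{k\lambda}$. Since $S^{i-2k}$ is trivial, $(S^{V})_{hC_p}\simeq \Sigma^{i-2k}(S^{k\lambda})_{hC_p}$. The spectrum $(S^{k\lambda})_{hC_p}$ is the Thom spectrum of $k$ times the tautological line bundle $L$ over $BC_p$; for negative $k$ it is the corresponding virtual Thom spectrum. Writing $BC_p$ as $S(\mathbb{C}^\infty)/C_p$, the Thom space of $kL$ over the finite skeleton $S(\mathbb{C}^N)/C_p$ is
\[
S(\mathbb{C}^{N+k})/C_p \,\big/\, S(\mathbb{C}^k)/C_p = L^{2N+2k-1}_{2k},
\]
the standard stunted-projective-space identification. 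Passing to the colimit gives $\Sigma^\infty (BC_p)^\infty_{2k}=P_{2k}$; for negative $k$ we use the James periodicity / Thom spectrum definition of $P_{2k}$. Therefore
\[
\pi_{i,2k}\, b(Y)=[\Sigma^{i-2k}P_{2k},Y]=Y^{2k-i}(P_{2k}),
\]
as claimed.

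\textbf{Step 3: odd weight $j=2k+1$.} By the defining cofiber sequence of $\sphere^\sigma$ and exactness of homotopy orbits, $(S^{\sigma})_{hC_p}$ is the cofiber of $\Sigma^\infty_+ EC_p \to \Sigma^\infty_+ BC_p$. The map $(C_p{}_+)_{hC_p}\simeq \Sigma^\infty_+EC_p\simeq \sphere$ includes the bottom cell, so
\[
(S^\sigma)_{hC_p}\simeq \Sigma^\infty BC_p = P_1 .
\]
Smashing with $S^{k\lambda}$ replaces $P_1$ by the Thom spectrum of $kL$ restricted to the cofiber of the bottom cell, giving $\Sigma^{2k}$-shifted cells starting in degree $2k+1$, namely $P_{2k+1}$. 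Concretely, this is the Thom-space cofiber sequence $S^{2k}\to P_{2k}\to P_{2k+1}$ from Step 2 with the bottom cell removed. The trivial suspension $S^{i-2k-1}$ then gives $\pi_{i,2k+1}b(Y)=Y^{2k+1-i}(P_{2k+1})$.

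\textbf{Main obstacle.} The main obstacle is the odd-weight identification. We must check that the cofiber of the bottom-cell inclusion $(S^{k\lambda})_{hC_p}\to (S^{k\lambda+\sigma})_{hC_p}$ correctly models $P_{2k+1}$, including for negative $k$. The cleanest route is to prove it first for the finite skeleta, and then deduce the negative range using the duality $S^{-\sigma}$ and the relation $\sphere^\lambda\wedge\sphere^{-\sigma}\simeq\sphere^\sigma$ of \cref{Ssigma}. Compatibility with stunted lens spectra for $k<0$ is then handled via the Thom spectrum definition.
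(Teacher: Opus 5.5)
Your argument is correct and is essentially the paper's proof: the same adjunction reduces $\pi_{i,j}b(Y)$ to maps out of $(S^{V})_{hC_p}$. The even case is the Thom-spectrum identification $(S^{k\lambda})_{hC_p}\simeq P_{2k}$, which the paper simply cites from \cite[Chapter V, Theorem 2.4]{bruner2006h} for all $k$, including negative $k$. The odd case comes from applying $(-)_{hC_p}$ to $S^{k\lambda}\wedge {C_p}_+\to S^{k\lambda}\to S^{k\lambda+\sigma}$ and recognizing the first map as the bottom-cell inclusion $S^{2k}\to P_{2k}$.

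That cofiber-sequence argument works uniformly in $k\in\mathbb{Z}$, so your informal ``smash $P_1$ with $S^{k\lambda}$'' phrasing should be replaced by it. The detour through $S^{-\sigma}$ in your last paragraph is not needed.
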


\begin{proof}
    The proof is similar to the proof of \cite[Proposition 3.2]{hou2025c3equivariantstablestems}. When $j=2k$, we have the following isomorphism 
    \begin{equation*}
\begin{aligned}
     \pi_{i,j}(b(Y)) & \cong \pi_0 \Map_{\Sp_{C_p}}(\sphere^{i-j+k\lambda}, F({EC_p}_+, i_*Y)) \cong \pi_0 \Map_{\Sp_{C_p}}({EC_p}_+ \wedge \sphere^{i-j+k\lambda},  i_*Y)  \\
     & \cong \pi_0 \Map_{\Sp}(({EC_p}_+ \wedge\sphere^{i-j+k\lambda})_{C_p},  Y) \cong \pi_0 \Map_{\Sp}( (\sphere^{i-j+k\lambda})_{hC_p},  Y)
\end{aligned}  
\end{equation*}
By \cite[Chapter $V$, Theorem 2.4]{bruner2006h}, we have
\begin{equation*}
    (\sphere^{i-j+k\lambda})_{hC_p} \cong (BC_p)^{i-j+k\lambda} \cong \Sigma^{i-j} P_{2k}.
\end{equation*}
This tells us that
    $\pi_{i,j}(b(Y)) \cong Y^{j-i}(P_{j})$.
When $j=2k+1$, the odd case is identical once we establish $P_{2k+1} \cong (\sphere^{k\lambda+ \sigma})^{hC_p}$.
The above follows from the cofiber sequence
\begin{equation*}
    (\sphere^{k\lambda} \otimes {C_p}_+)^{hC_p} \rightarrow (\sphere^{k\lambda})^{hC_p} \rightarrow (\sphere^{k\lambda+ \sigma})^{hC_p}
\end{equation*}
and the first map can be seen as the bottom cell inclusion when identifying $(\sphere^{k\lambda})^{hC_p}$ as the Thom spectrum.
\end{proof}

\section{Odd tails and $\K(1)$-local decomposition of $P_{2n+1}$}

\begin{lemma}\label{characterPicard}
    Let $X$ be a spectrum such that $\KU_p^*(X) \cong \Sigma^{2k}\KU_p^*$ as $\KU_p^*$-modules for some $k\in \mathbb{Z}$. Suppose that $\psi^g$ acts on $\KU_p^0(X)$ as the scalar multiplication by $a \in \mathbb{Z}_p^{\times}$. Then $L_{\K(1)}X$ is invertible, and
    \begin{equation*}
        F(X, \sphere_{\K(1)}) \cong \sphere_{\K(1)}[a], \quad L_{\K(1)}X \cong \sphere_{\K(1)}[a^{-1}].
    \end{equation*}
\end{lemma}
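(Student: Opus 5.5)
The plan is to reduce to the Hopkins--Mahowald--Sadofsky characterization of invertible $\K(1)$-local spectra, and then read off the Picard element from the $\psi^g$-action, using the characterization of $\sphere_{\K(1)}[a]$ recalled in the Notation section.

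\emph{Step 1: invertibility.} Apply $X$ to the cofiber sequence $\KU_p \xrightarrow{p} \KU_p \to \KU/p$. Since $\KU_p^*(X)\cong \Sigma^{2k}\KU_p^*$ is free of rank one and $p$-torsion free, the long exact sequence splits into short exact sequences. Hence $(\KU/p)^*(X)$ is free of rank one over $(\KU/p)^*$, concentrated in even degrees after the shift.

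At odd $p$, $\K(1)$ is a wedge summand of $\KU/p$ (it is $\KU/p$ split by the $\mathbb{F}_p^\times$-eigenspaces), so $\K(1)^*(X)$ is free of rank one over $\K(1)^*$. Because $\K(1)_*$ is a graded field, $\K(1)^*(X)\cong \operatorname{Hom}_{\K(1)_*}(\K(1)_*X,\K(1)_*)$. Therefore $\K(1)_*X$ is also of rank one. By Hopkins--Mahowald--Sadofsky, $L_{\K(1)}X$ is then invertible in the $\K(1)$-local category.

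\emph{Step 2: identifying the dual.} Let $D=F(X,\sphere_{\K(1)})=F(L_{\K(1)}X,\sphere_{\K(1)})$. This is the $\K(1)$-local inverse of $L_{\K(1)}X$, in particular it is dualizable. For a dualizable $\K(1)$-local object we have
\[
(\KU_p\otimes D)^\wedge_p\simeq F(L_{\K(1)}X,\KU_p)=F(X,\KU_p),
\]
and this identification is compatible with the stable Adams operations acting on the $\KU_p$ factor. So $\pi_*(\KU_p\otimes D)^\wedge_p\cong \KU_p^{-*}(X)\cong \pi_*\KU_p$, using $2$-periodicity to absorb the even shift $2k$. Moreover, $\psi^g$ acts on $\pi_0$ by multiplication by $a$, so by the characterization of Picard elements in \cite[Proposition 2.1]{hopkins1994constructions} we get $D\simeq \sphere_{\K(1)}[a]$.

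\emph{Step 3: the inverse.} The Künneth isomorphism for completed $\KU_p$-homology of $\K(1)$-local smash products of invertible objects is $\psi$-equivariant. This gives $\sphere_{\K(1)}[a]\otimes \sphere_{\K(1)}[b]\simeq \sphere_{\K(1)}[ab]$, so
\[
L_{\K(1)}X\simeq D^{-1}\simeq \sphere_{\K(1)}[a^{-1}].
\]

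I expect the main obstacle to be Step 2. There one must check carefully that the identification of $(\KU_p\otimes D)^\wedge_p$ with $F(X,\KU_p)$ intertwines the Adams operations, including the sign and twist conventions for how $\psi^g$ acts on cohomology versus homology. One must also make sure the even shift $\Sigma^{2k}$ does not change the scalar on $\pi_0$. If $\psi^g$ acts on the Bott class by $g$, a shift by $2k$ would multiply the eigenvalue by $g^{k}$. So I would first verify that the hypothesis is stated directly on $\KU_p^0(X)$, which makes the shift irrelevant: the scalar $a$ is read off in degree $0$ on both sides.
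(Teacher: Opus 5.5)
Your proposal is correct and follows essentially the same route as the paper: get rank one in $\K(1)$-(co)homology from the splitting of $\KU/p$, deduce invertibility of $L_{\K(1)}X$ from Hopkins--Mahowald--Sadofsky, use $(\KU_p\otimes F(X,\sphere_{\K(1)}))^\wedge_p\simeq F(X,\KU_p)$ to read off the $\psi^g$-eigenvalue $a$ on $\KU_p^0(X)$, and then pass to the inverse. Your extra checks are exactly the details the paper leaves implicit: universal coefficients to get $\K(1)_*X$ of rank one from $\K(1)^*X$, $\psi^g$-equivariance of the duality map by naturality in the coefficient spectrum, and multiplicativity of eigenvalues to obtain $a^{-1}$ for $L_{\K(1)}X$.
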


\begin{proof}
  Because $\K(1)$ is a summand of $\KU_p/p$, we learn that 
  \begin{equation*}
      \dim_{\K(1)_*} \K(1)_*(X)=   \dim_{\K(1)^*} \K(1)^*(X) =1.
  \end{equation*}
  By \cite[Theorem 1.3]{hopkins1994constructions}, we already see that $L_{\K(1)}X$ is invertible. To identify it as $\sphere_{\K(1)}[a^{-1}]$, we recall that there is a short exact sequence \cite[Proposition 2.1]{hopkins1994constructions}:
  \begin{equation*}
      0\rightarrow M \rightarrow \pi_0\operatorname{Pic}(\Sp_{\K(1)}) \rightarrow \mathbb{Z}/2 \rightarrow 0.
  \end{equation*}
  The kernel $M$ is identified via the isomorphism $\operatorname{ev}: M \xrightarrow[]{\cong} \mathbb{Z}_p^{\times}$ where $\operatorname{ev}(X)$ is the eigenvalue of $\psi^g$ on $\lim_n\KU_p(X\otimes \sphere/p^n)$. Because $L_{\K(1)}X$ is invertible, we have
  \begin{equation*}
      \KU_p \otimes F(X,\sphere_{\K(1)}) \cong \KU_p \otimes F(L_{\K(1)}X, \sphere_{\K(1)}) \cong F(L_{\K(1)}X, \KU_p) \cong F(X, \KU_p) 
  \end{equation*}
 By assumption, $F(X, \KU_p)$ is a finite-rank free $\KU_p$-module, so we have
 \begin{equation*}
     \lim_n \KU_p \otimes F(X,\sphere_{\K(1)}) \otimes \sphere/p^n \cong \KU_p \otimes F(X,\sphere_{\K(1)}) \cong F(X, \KU_p)
 \end{equation*}
Hence, the eigenvalue isomorphism identifies $F(X,\sphere_{\K(1)})$ with $\sphere_{\K(1)}[a]$. Hence $L_{\K(1)}X$ is isomorphic to $ \sphere_{\K(1)}[a^{-1}]$ as the dual.
\end{proof}

\begin{prop} \label{fixpoint}
     Consider $\sphere_{\K(1)}$ with the trivial $C_p$-action and let $\omega$ be a primitive $(p-1)$-th root of unity in $\mathbb{Z}_p$. We have an isomorphism of spectra 
    \begin{equation*}
  \sphere_{\K(1)}^{hC_p} \cong \sphere_{\K(1)} \oplus \bigvee_{i=1}^{p-1} \sphere_{\K(1)}[{\omega^i}].
    \end{equation*}
\end{prop}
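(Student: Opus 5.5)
We identify $\sphere_{\K(1)}^{hC_p} = F(BC_{p+}, \sphere_{\K(1)})$, which is the same as $F(L_{\K(1)}\Sigma^\infty BC_{p+}, \sphere_{\K(1)})$. The argument then has three steps: split $\Sigma^\infty BC_{p+}$ stably at $p$, identify each $\K(1)$-local summand with \cref{characterPicard}, and dualize.

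\emph{Splitting.} First, $\Sigma^\infty BC_{p+} \simeq \sphere \vee \Sigma^\infty BC_p$. Next, $p$-locally, the stable splitting of $\Sigma^\infty BC_p$ comes from the action of $\mathbb{F}_p^\times = \operatorname{Aut}(C_p)$ on $BC_p$. The group ring $\mathbb{Z}_{(p)}[\mathbb{F}_p^\times]$ (after adjoining $\omega$, or working $p$-completely) contains orthogonal idempotents
\begin{equation*}
e_i = \frac{1}{p-1}\sum_{a\in\mathbb{F}_p^\times}\omega^{-i}(a)\,[a], \qquad i=1,\dots,p-1.
\end{equation*}
These split $(\Sigma^\infty BC_p)^\wedge_p$ into $p-1$ summands $X_1, \dots, X_{p-1}$. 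Because $\K(1)$-localization only depends on the $p$-completion, the splitting persists after $L_{\K(1)}$.

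\emph{Identifying the summands.} We use that $\KU_p^0(BC_p) \cong \mathbb{Z}_p[[x]]/([p](x))$ with $x=L-1$, and $\KU_p^1(BC_p)=0$. The reduced part $\widetilde{\KU}_p^0(BC_p)$ is free of rank $p-1$ over $\mathbb{Z}_p$, and it is identified with the augmentation ideal of $R(C_p)^\wedge_p \cong \mathbb{Z}_p[C_p^\vee]$. Under this identification, $\psi^k$ and the automorphism $[a]$ act on characters by $L\mapsto L^{k}$ and $L\mapsto L^{a}$ respectively. Hence for $k$ prime to $p$, $\psi^k$ acts on the reduced $\KU_p$-cohomology by the same permutation as the automorphism $[k \bmod p]$.

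Since $\widetilde{\KU}_p^0(BC_p)\otimes\mathbb{Q}$ is the regular representation of $\mathbb{F}_p^\times$ with the trivial character removed, the idempotent $e_i$ cuts out a rank-one free $\mathbb{Z}_p$-module. This means $\KU_p^*(X_i) \cong \KU_p^*$ is free of rank one in even degree. On it, $\psi^g$ acts as $[g\bmod p]$, which is the scalar $\omega^{\pm i}$ depending on the sign convention in $e_i$. By \cref{characterPicard}, $F(X_i,\sphere_{\K(1)}) \cong \sphere_{\K(1)}[\omega^{\pm i}]$. As $i$ ranges over $1,\dots,p-1$, the set $\{\omega^{\pm i}\}$ is the whole $\mu_{p-1}$, so the sign convention does not matter. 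The trivial character $\omega^{0}=\omega^{p-1}$ occurs for $i=p-1$, and this is consistent: that summand $X_{p-1}$, the piece whose cohomology is $\psi$-invariant of rank one, is $\K(1)$-locally $\sphere_{\K(1)}[1]=\sphere_{\K(1)}$. The extra $\sphere$ coming from the basepoint gives the separate summand $\sphere_{\K(1)}$.

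\emph{Dualizing.} Summing over the summands gives
\begin{equation*}
F(BC_{p+},\sphere_{\K(1)}) \cong \sphere_{\K(1)} \oplus \bigvee_{i=1}^{p-1} \sphere_{\K(1)}[\omega^i].
\end{equation*}

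\emph{Main obstacle.} The delicate step is checking the hypothesis of \cref{characterPicard} for each $X_i$: it requires $\KU_p^*(X_i)$ to be \emph{free} of rank one, not merely rank one rationally. The idempotents $e_i$ involve division by $p-1$, which is a unit in $\mathbb{Z}_p$, so they act integrally on $\widetilde{\KU}_p^0(BC_p)$. The augmentation ideal is a free $\mathbb{Z}_p$-module of rank $p-1$, and applying the idempotents decomposes it into direct summands, each of which is a rank-one free module. I would verify this directly by writing down the basis $\{L^j-1\}$. A secondary check is compatibility of the $\psi^g$-eigenvalue convention with the one in \cite{hopkins1994constructions}. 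Since the final answer is symmetric under $\omega\mapsto\omega^{-1}$, this does not affect the statement.
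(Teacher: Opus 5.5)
Your proposal is correct and is essentially the paper's argument: the Harris--Kuhn idempotents $e_i\in\mathbb{Z}_p[\mathbb{F}_p^\times]$ split $\Sigma^\infty BC_p$, and $\KU_p^0(BC_{p+})\cong\mathbb{Z}_p[t]/(t^p-1)$ shows that $e_i$ cuts out a free rank-one line on which $\psi^g$ acts by $\omega^i$. The paper exhibits this line by a discrete Fourier transform of the basis $t^{g^n}-1$, while you use that $\{L^j-1\}$ is permuted simply transitively; \cref{characterPicard} then identifies each dual summand in both versions. One slip to fix: $\widetilde{\KU}_p^0(BC_p)\otimes\mathbb{Q}$ is the \emph{full} regular representation of $\mathbb{F}_p^\times$, not the regular representation minus the trivial character, and this is exactly why $X_{p-1}$ contributes the extra copy $\sphere_{\K(1)}[\omega^{p-1}]=\sphere_{\K(1)}$, as you correctly say afterwards.
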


\begin{proof}
 Let us quickly recall the construction of the stable splitting of $\Sigma^{\infty} BC_p $\cite{harris1988stable}: 
    Because $\Sigma^{\infty} BC_p$ is $p$-complete and any element in $\mathbb{F}_p^{\times}=\langle \sigma \rangle$ induces an automorphism of $BC_p$, we see there is a ring map as follows:
    \begin{equation} \label{ringmap}
     \mathbb{Z}_p[\mathbb{F}_p^{\times}] \rightarrow \pi_0 \Map(\Sigma^{\infty} BC_p, \Sigma^{\infty} BC_p),
    \end{equation}
    In the ring $ \mathbb{Z}_p[\mathbb{F}_p^{\times}]$, we can write $1$ as 
    \begin{equation*}
        1=e_1+e_2+ \cdots + e_{p-1},
    \end{equation*}
    where 
    \begin{equation*}
        e_i= \frac{1}{p-1} \sum_{n=0}^{p-2} \omega^{-in}\sigma^n \in \mathbb{Z}_p[\langle \sigma \rangle].
    \end{equation*}
    This idempotent decomposition in the ring induces a decomposition of $\Sigma^{\infty}BC_p$ as
    \begin{equation*}
        \Sigma^{\infty}BC_p \cong X_1 \vee X_2 \vee \cdots \vee X_{p-1}.
    \end{equation*}
    In particular, we also know that $X_{p-1} \cong \Sigma^{\infty} B\Sigma_p$ \cite[Theorem B]{harris1988stable}. The stable decomposition implies that
    \begin{equation*}
         \sphere_{\K(1)}^{hC_p} \cong 
        F(\Sigma^{\infty}_+ BC_p, \sphere_{\K(1)}) \cong \sphere_{\K(1)} \oplus \bigvee_{i=1}^{p-1} \Map(X_i, \sphere_{\K(1)}).
    \end{equation*}
We will identify these $\K(1)$-local summands by \cref{characterPicard}. Notice the ring map (\ref{ringmap}) can be extended to a map as follows: 
\begin{equation*}
        \mathbb{Z}_p[\mathbb{F}_p^{\times}] \rightarrow \pi_0 \Map(\Sigma^{\infty} BC_p, \Sigma^{\infty} BC_p) \rightarrow  \operatorname{Hom}_{\mathbb{Z}_p}( \widetilde{\KU}_p^0({BC_p}), \widetilde{\KU}_p^0({BC_p})).
\end{equation*}
   By the Atiyah-Segal completion theorem, we know that 
   \begin{equation*}
        \KU_p^0({BC_p}) \cong \operatorname{R}_{\mathbb{C}}(C_p)^{\wedge}_p \cong \mathbb{Z}_p[t]/(t^p-1).
   \end{equation*}
    Under this isomorphism, $\sigma \in \mathbb{F}_p^{\times}$ induces a ring automorphism of $\KU_p^0({BC_p}_+)$ given by $t \mapsto t^g$, where $g$ is primitive modulo $p^2$ and also primitive modulo $p$. Under the decomposition
    \begin{equation*}
         \KU_p^0({BC_p}) \cong \KU_p^0(*) \oplus \widetilde{\KU}_p^0(BC_p),
    \end{equation*}
    the idempotent $e_i$ induces a map on $\widetilde{\KU}_p^0(BC_p)\cong \Ker(\epsilon)$, where $\epsilon\colon \KU_p^0({BC_p})  \rightarrow \KU_p^0(*)$ is the augmentation map. Consider a natural basis $\{t^{g^n}-1\}_{n=1}^{p-1}$ on the kernel and perform the discrete Fourier transform, then we get another basis:
    \begin{equation*}
    \begin{aligned}
         &v_{p-1}=\frac{1}{p-1} \sum_{n=0}^{p-2} (t^{g^n} -1) ;\\
         &v_i=\frac{1}{p-1} \sum_{n=0}^{p-2} \omega^{-in}t^{g^n} \qquad \text{for } i=1, \cdots, p-2. \\
    \end{aligned}    
    \end{equation*}
    By noticing that $\sigma(v_i)=\omega^i v_i$, we have
    \begin{equation*}
    \begin{aligned}
        e_i(v_i) &=\frac{1}{p-1} \sum_{n=0}^{p-2} \omega^{-in}\sigma^n(v_i)
          = \frac{1}{p-1} \sum_{n=0}^{p-2} \omega^{-in} \omega^{in} v_i 
          = v_i; \\
        e_i(v_j) &=\frac{1}{p-1} \sum_{n=0}^{p-2} \omega^{-in}\sigma^n(v_j)
          = \frac{1}{p-1} \sum_{n=0}^{p-2} \omega^{-in} \omega^{jn} v_j 
          = 0.\\
    \end{aligned} 
    \end{equation*}
    This shows $X_i=e_i\Sigma^{\infty}BC_p$ has $\KU_p$-cohomology
    \begin{equation*}
        \KU_p^0(X_i) \cong \mathbb{Z}_p\{v_i\}
    \end{equation*}
    as a $\mathbb{Z}_p^{\times}$-submodule of $\widetilde{\KU}_p^0({BC_p})$, so we have $\KU_p^*(X_i) \cong \KU_p^*$ and the Adams operation $\psi^g$ acts as
    $$\psi^g(v_i)=\omega^i v_i.$$
 \cref{characterPicard} therefore gives
        $ \Map(X_i, \sphere_{\K(1)}) \cong \sphere_{\K(1)}[\omega^i]$, which finishes the proof.
\end{proof}
As an immediate corollary, we also have
    \begin{corollary}     
 \label{k1bcp}
         Consider the stable decomposition $\Sigma^{\infty}_{+} BC_p \cong \sphere \oplus \bigvee_{i=1}^{p-1}X_i $. We have $L_{\K(1)}X_i \cong\sphere_{\K(1)}[{\omega^{-i}}] $, which implies
        \begin{equation*}
           L_{\K(1)} \Sigma^{\infty}_{+} BC_p \simeq \sphere_{\K(1)} \oplus \bigvee_{i=1}^{p-1} \sphere_{\K(1)}[{\omega^i}],
        \end{equation*}
        where $\omega$ is the chosen primitive $(p-1)$-th root of unity in $\mathbb{Z}_p$. 
   \end{corollary}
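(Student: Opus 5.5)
The plan is to read the corollary off from the proof of \cref{fixpoint}, using the duality statement in \cref{characterPicard}. Start with the stable splitting $\Sigma^{\infty}_+ BC_p \cong \sphere \oplus \Sigma^\infty BC_p \cong \sphere \oplus \bigvee_{i=1}^{p-1} X_i$ coming from the idempotents $e_i$. Since $L_{\K(1)}$ is exact and commutes with finite wedges, we get
\begin{equation*}
L_{\K(1)}\Sigma^\infty_+ BC_p \simeq \sphere_{\K(1)} \oplus \bigvee_{i=1}^{p-1} L_{\K(1)} X_i .
\end{equation*}
So it suffices to identify each $L_{\K(1)}X_i$.

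The proof of \cref{fixpoint} shows that $\KU_p^*(X_i) \cong \KU_p^*$ is free of rank one on $v_i$ in degree $0$. It also shows that $\psi^g$ acts on $\KU_p^0(X_i)$ by the scalar $\omega^i$. There is one point to justify here: the identification $\sigma(v_i)=\omega^i v_i$ is the action of the automorphism induced by the group automorphism $t\mapsto t^g$. This agrees with $\psi^g$ on $\KU_p^0(BC_p)\cong R(C_p)^\wedge_p$, because on line bundles $\psi^g(t)=t^g$ and both operations are ring maps, so they agree on all of $\mathbb{Z}_p[t]/(t^p-1)$.

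Now apply \cref{characterPicard} with $k=0$ and $a=\omega^i$. It gives that $L_{\K(1)}X_i$ is invertible and $L_{\K(1)}X_i \cong \sphere_{\K(1)}[\omega^{-i}]$, which is the first claim.

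For the displayed splitting, note that $\omega^{-i}=\omega^{p-1-i}$. As $i$ runs over $1,\dots,p-1$, the index $p-1-i$ runs over $\{0,\dots,p-2\}$, which is the same set of residues mod $p-1$ as $\{1,\dots,p-1\}$ because $\omega^{0}=\omega^{p-1}=1$. Reindexing the wedge therefore gives
\begin{equation*}
\bigvee_{i=1}^{p-1}\sphere_{\K(1)}[\omega^{-i}]\simeq \bigvee_{i=1}^{p-1}\sphere_{\K(1)}[\omega^{i}].
\end{equation*}

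The only genuinely nontrivial step is the compatibility of $\psi^g$ with the automorphism $\sigma$. This reduces to the standard fact that Adams operations act on representation rings by $\psi^g(\rho)(x)=\rho(x^g)$, which is compatible with the Atiyah–Segal completion map. Everything else is bookkeeping.
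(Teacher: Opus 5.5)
Your proposal is correct and is the paper's argument: the corollary is read off from the proof of \cref{fixpoint}, where $\KU_p^0(X_i)=\mathbb{Z}_p\{v_i\}$ with $\psi^g$ acting by $\omega^i$. One then applies \cref{characterPicard} to get $L_{\K(1)}X_i\cong\sphere_{\K(1)}[\omega^{-i}]$ and reindexes $\omega^{-i}=\omega^{p-1-i}$. Your check that $\sigma^*$ agrees with $\psi^g$ on $\mathbb{Z}_p[t]/(t^p-1)$, since both are ring maps sending $t\mapsto t^g$, makes explicit a step the paper uses without comment.
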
 
   
We first treat $n\geq 0$:

\begin{prop}\label{decomposeatpositive}
    For any $n \geq 0$ and $1 \leq i \leq p-1$, we have $L_{\K(1)}(X_i)_{2n+1}^{\infty} \cong \sphere_{\K(1)}[\omega^{-i}]$, and
    \begin{equation*}
        L_{\K(1)} P_{2n+1} \simeq  \bigvee_{i=1}^{p-1} \sphere_{\K(1)}[{\omega^{i}}].
    \end{equation*}
\end{prop}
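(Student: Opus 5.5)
We first make sense of $(X_i)^{\infty}_{2n+1}$. The idempotents $e_i$ act on $\Sigma^\infty BC_p$ compatibly with the skeletal filtration, since they are built from self-maps of $BC_p$ that are cellular up to homotopy. Hence the splitting $\Sigma^\infty BC_p\simeq \bigvee_i X_i$ restricts to the skeleta. We then define $(X_i)^{\infty}_{2n+1}$ as $e_iP_{2n+1}$, so that $P_{2n+1}\simeq\bigvee_{i=1}^{p-1}(X_i)^\infty_{2n+1}$. Concretely, $e_iP_{2n+1}$ is the cofiber of $e_i$ applied to $\Sigma^\infty(BC_p)^{2n}\to \Sigma^\infty BC_p$. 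With this definition the displayed decomposition follows from the first claim, after reindexing $i\mapsto p-1-i$, since $\omega^{-i}=\omega^{p-1-i}$. The plan is therefore to apply \cref{characterPicard} to each $e_iP_{2n+1}$, exactly as in the proof of \cref{fixpoint}.

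The first step is to compute $\KU_p^*(P_{2n+1})$. Consider the cofiber sequence $\Sigma^\infty(BC_p)^{2n}\to \Sigma^\infty BC_p\to P_{2n+1}$. The skeleton $(BC_p)^{2n}$ has reduced $\KU_p$-cohomology concentrated in even degrees; its $\widetilde{\KU}_p^0$ is $\widetilde R(C_p)^\wedge_p$ modulo the $(n+1)$-st power of the augmentation ideal $I$, where $I$ is generated by $x=t-1$. The restriction map is surjective, so $\KU_p^{0}(P_{2n+1})\cong I^{n+1}\subset \widetilde{\KU}^0_p(BC_p)$ and $\KU_p^{1}(P_{2n+1})=0$. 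This uses the standard filtration compatibility: the Atiyah--Hirzebruch filtration on $\KU^0(BC_p)$ coincides with the $I$-adic one, and the AHSS for lens spaces collapses. Next, $I^{n+1}$ is a free $\mathbb{Z}_p$-module of rank $p-1$, being finite index in $I$. It is preserved by every $\sigma^m$, since these are ring maps preserving $I$. Hence the $e_i$ split it into $(p-1)$ rank-one eigenspaces, one for each character: $\sigma$ acts via $t\mapsto t^g$, and the eigenspaces of $I^{n+1}\otimes\mathbb{Q}$ agree with those of $I\otimes\mathbb{Q}$, namely $\mathbb{Q}_p v_i$ with eigenvalue $\omega^i$. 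Thus $\KU^0_p(e_iP_{2n+1})$ is free of rank one, spanned by a multiple of $v_i$, and $\KU_p^1$ vanishes.

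The second step is to identify the eigenvalue of $\psi^g$. The key point is that on $\KU^0_p(BC_p)\cong\mathbb{Z}_p[t]/(t^p-1)$ the Adams operation $\psi^g$ is the ring map $t\mapsto t^g$. This holds because $\psi^g$ of a line bundle is its $g$-th tensor power, and it is exactly the map induced by $\sigma$. By naturality, $\psi^g$ on the sub-lattice $\KU^0_p(e_iP_{2n+1})\subset\KU^0_p(X_i)$ is multiplication by $\omega^i$. \cref{characterPicard} then gives $L_{\K(1)}e_iP_{2n+1}\simeq\sphere_{\K(1)}[\omega^{-i}]$, matching \cref{k1bcp} in the case $n=0$.

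The main obstacle is the first step. We must justify that the idempotent splitting is compatible with the stunted pieces, i.e.\ that the $\sigma^m$ can be chosen to respect skeleta; cellular approximation should handle this. We must also pin down $\KU^0_p(P_{2n+1})$ as the sublattice $I^{n+1}$ with vanishing odd part. If the skeletal compatibility is awkward, a fallback avoids it: define the pieces by applying $e_i$ to the $\KU_p$-cohomology alone. Since \cref{characterPicard} only needs $\KU_p$-cohomology with its $\psi^g$-action, one can then split $L_{\K(1)}P_{2n+1}$ using the $\mathbb{Z}_p[\mathbb{F}_p^\times]$-action on its endomorphisms.
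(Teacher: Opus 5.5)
Your proposal is correct and follows essentially the same route as the paper. Both arguments split $P_{2n+1}$ by the idempotents $e_i$, use the cofiber sequence $\Sigma^{\infty}(BC_p)^{2n}\to\Sigma^{\infty}BC_p\to P_{2n+1}$ to realize $\KU_p^0$ of each stunted summand as a rank-one $\psi^g$-stable sublattice of $\KU_p^0(X_i)$ (so $\psi^g$ acts by $\omega^i$), and conclude with \cref{characterPicard}.

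The differences are minor. You identify $\KU_p^0(P_{2n+1})$ directly as $I^{n+1}$, whereas the paper goes through the Thom isomorphism for $P_{2n}$. For the splitting step you flagged as the main obstacle, the paper takes $(X_i)^{\infty}_{2n+1}$ to be the quotient of $X_i$ by the $2n$-skeleton of its minimal cell structure, so that $\bigvee_i X_i^{(2n)}$ is a minimal $2n$-skeleton of $\Sigma^{\infty}BC_p$; this avoids having to check that cellular representatives of the $e_i$ remain idempotent on $P_{2n+1}$.
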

\begin{proof}
    By cellular approximation, we can make the idempotent $e_j$ cellular with respect to the minimal cell structure. Notice that $e_j H^*(BC_p; \mathbb{F}_p) \neq 0$ iff $* \equiv 2j, 2j-1$ mod $2(p-1)$. This tells us that the $p$-localized $X_j$ has the minimal cell structure with one cell at each dimension $* \equiv 2j, 2j-1$ mod $2(p-1)$, and 
   \begin{equation} \label{Pndecom}
    P_n \simeq \bigvee_{i=1}^{p-1} (X_i)_{n}^{\infty}.
   \end{equation}
   By \cite[Chapter $V$, Theorem 2.4]{bruner2006h}, we know $P_{2n} \simeq (BC_p)^{n\lambda}$, where $\lambda$ is a rotation $C_p$-representation. The Thom isomorphism gives $\KU_p^*(P_{2n}) \cong \KU_p^{*-2n}({BC_p}_+)$. Then we can use the cofiber sequence $\sphere^{2n} \rightarrow P_{2n} \rightarrow P_{2n+1}$ to learn that
   \begin{equation*}
       \KU_p^*(P_{2n+1}) \cong
           \left\{  
             \begin{aligned}
                &\bigoplus_{i=1}^{p-1} \mathbb{Z}_p, & \quad *=2k \\
                 &0, &\quad *=2k+1 \\
             \end{aligned}   
\right.
   \end{equation*}
   Consider the long exact sequence induced by the cofiber sequence $\Sigma^{\infty} (BC_p)^{2n} \rightarrow \Sigma^{\infty} BC_p \rightarrow P_{2n+1}$. Combining the above calculation and the calculation of $\KU_p^*((BC_p)^{2n})$ from \cite[Chapter $V$, Theorem 2.3]{bruner2006h}, we know the long exact sequence breaks into short exact sequences in each degree
   \begin{equation*}
       0 \rightarrow \bigoplus_{i=1}^{p-1} \KU_p^*((X_i)_{2n+1}^{\infty}) \rightarrow \bigoplus_{i=1}^{p-1} \KU_p^*(X_i) \rightarrow \KU_p^*((BC_p)^{2n}) \rightarrow 0, 
   \end{equation*}
   which realizes $\KU_p^0((X_i)_{2n+1}^{\infty})\cong \mathbb{Z}_p$ as a $\mathbb{Z}_p^{\times}$-submodule of $\KU_p^0(X_i)$. Therefore $\psi^g$ acts on $\KU_p^0((X_i)_{2n+1}^{\infty})$ as the scalar multiplication by $\omega^i$. Again, by \cref{characterPicard} we see that we must have
   \begin{equation*}
       L_{\K(1)} (X_i)_{2n+1}^{\infty} \cong \sphere_{\K(1)}[\omega^{-i}].
   \end{equation*}
\end{proof}

For the more general case where $n\in \mathbb{Z}$, we recall that, under James periodicity, $P_{2n+\epsilon}^{2n+k}$ for $\epsilon=0,1$ and $k >0$ can be constructed as $\Sigma^{2(n-r)} P_{2r+\epsilon}^{2r+k}$ for any $r \equiv n \mod{p^{\lfloor k/q \rfloor}}$, and $(X_i)_{2n+\epsilon}^{2n+k}$ can be defined similarly for $n <0$.
We have

\begin{prop} \label{decomposeatnegative}
    For $n <0$ and $1 \leq i \leq p-1 $, we have $L_{\K(1)}(X_i)_{2n+1}^{\infty} \cong \sphere_{\K(1)}[\omega^{-i}]$, and
        \begin{equation*}
        L_{\K(1)} P_{2n+1} \simeq  \bigvee_{i=1}^{p-1} \sphere_{\K(1)}[{\omega^{i}}].
    \end{equation*}
\end{prop}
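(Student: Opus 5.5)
The plan is to define $P_{2n+1}$ for $n<0$ as the colimit (or limit) of James-periodic stunted spectra, and then reduce to the case $n\geq 0$ of \cref{decomposeatpositive}. Concretely, $P_{2n+1}=\colim_k P_{2n+1}^{2n+k}$. For each finite $k$, James periodicity identifies $P_{2n+1}^{2n+k}\simeq\Sigma^{2(n-r)}P_{2r+1}^{2r+k}$ with $r\ge 0$ and $r\equiv n \bmod p^{\lfloor k/q\rfloor}$. The same holds summandwise for $(X_i)_{2n+1}^{2n+k}$, because the idempotents $e_i$ are compatible with the Thom-space identifications: they come from automorphisms of $C_p$, which act on $(BC_p)^{n\lambda}$ compatibly.

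The first step is to compute $\KU_p^*((X_i)_{2n+1}^{\infty})$ together with its $\psi^g$-action.
\begin{itemize}
\item By the Thom isomorphism, $P_{2n}\simeq (BC_p)^{n\lambda}$ has $\KU_p^*(P_{2n})\cong \KU_p^{*-2n}(BC_p{}_+)$ for every $n\in\mathbb Z$. This holds because the virtual Thom spectrum $(BC_p)^{n\lambda}$ is defined for negative $n$ as well, as in \cite[Chapter V]{bruner2006h}.
\item The cofiber sequence $S^{2n}\to P_{2n}\to P_{2n+1}$ then gives $\KU_p^*(P_{2n+1})$ free of rank $p-1$ in even degrees and zero in odd degrees, exactly as in the proof of \cref{decomposeatpositive}.
\item Applying the idempotents $e_i$, each $(X_i)_{2n+1}^\infty$ has $\KU_p^*$ free of rank one.
\end{itemize}

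The second step, and the main obstacle, is identifying the eigenvalue of $\psi^g$ on $\KU_p^0((X_i)_{2n+1}^{\infty})$. The Thom class of $n\lambda$ is not $\psi^g$-invariant: $\psi^g$ multiplies it by a Bott-type cannibalistic class $\rho^g(n\lambda)$. So one cannot simply transport the eigenvalue. I would avoid this by working with finite skeleta. The restriction
\[
\KU_p^0((X_i)_{2n+1}^{\infty})\to \KU_p^0((X_i)_{2n+1}^{2n+k})
\]
is compatible with $\psi^g$ and, for large $k$, injective modulo $p^N$ with $N$ growing in $k$. This injectivity follows from the Milnor sequence, since $\lim^1$ vanishes as the groups are finitely generated $\mathbb Z_p$-modules.

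On the finite piece, James periodicity gives an equivalence with $\Sigma^{2(n-r)}(X_i)_{2r+1}^{2r+k}$. By \cref{decomposeatpositive}, the bottom free class there has $\psi^g$-eigenvalue $\omega^i$ modulo $p^{N}$. The suspension shift $\Sigma^{2(n-r)}$ changes the $\psi^g$-eigenvalue by a factor $g^{\,n-r}$ on the Bott-periodic identification, and that factor is $\equiv 1$ modulo $p^{N}$ because $r\equiv n \bmod p^{\lfloor k/q\rfloor}$ and $r-n$ is also divisible by $p-1$ for the appropriate normalization. Here one should choose $r$ also in the right class modulo $p-1$, which is allowed since $X_i$ is $q$-periodic in cell structure. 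Letting $k\to\infty$, the eigenvalue on the infinite spectrum is exactly $\omega^i$. Some care is needed to match the even-degree generator in degree $0$ with the degree-$2r$ classes; I expect this bookkeeping with the factor $g^{n-r}$ to be the delicate point.

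Finally, \cref{characterPicard} applies: $\KU_p^*$ is free of rank one and $\psi^g$ acts by $\omega^i$. This yields $L_{\K(1)}(X_i)_{2n+1}^\infty\cong\sphere_{\K(1)}[\omega^{-i}]$. Summing over $i$ via the decomposition (\ref{Pndecom}) gives $L_{\K(1)}P_{2n+1}\simeq\bigvee_{i=1}^{p-1}\sphere_{\K(1)}[\omega^{-i}]=\bigvee_{i=1}^{p-1}\sphere_{\K(1)}[\omega^{i}]$, after reindexing $i\mapsto p-1-i$.
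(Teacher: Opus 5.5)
Your argument is correct in outline, but it takes a different route from the paper. The paper does not compare with $r\ge 0$ via James periodicity. It inducts downward from the base case $n=0$ (\cref{k1bcp}) along the cofiber sequences $\Sigma^{2n-1}\sphere/p\to P_{2n-1}\to P_{2n+1}$. Since $\KU_p^*(\Sigma^{2n-1}\sphere/p)$ is $\mathbb{Z}/p$ in even degrees and zero in odd degrees, one gets $0\to\KU_p^0(P_{2n+1})\to\KU_p^0(P_{2n-1})\to\mathbb{Z}/p\to 0$. The two cells of the Moore spectrum lie in the single summand $X_{i_0}$ with $i_0\equiv n \pmod{p-1}$. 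So each summand sees a $\psi^g$-equivariant injection of rank-one lattices, with torsion-freeness coming from the Thom isomorphism as in your first step. This forces the eigenvalue to stay $\omega^i$ as $n$ decreases.

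That argument buys several things:
\begin{itemize}
\item It avoids the cannibalistic class entirely.
\item It needs no Bott-twist bookkeeping and no limit over skeleta.
\item It gives the rank-one claim for free. Your first step asserts rank one for each $(X_i)_{2n+1}^\infty$ without justification; in your approach it does follow, from the finite pieces being cyclic of unbounded order.
\end{itemize}

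Your route, in exchange, gives the eigenvalue stage by stage, and its key point is right. Once $(p-1)\mid n-r$, the twist satisfies $g^{n-r}\equiv 1 \bmod p^{v_p(n-r)+1}$, and you may choose $r\ge 0$ with $v_p(n-r)$ as large as you like.

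To make your route airtight, two points need attention.
\begin{itemize}
\item The finite pieces have torsion $\KU_p^0$, so there is no free class on them. You should take $k$ even. Then $\KU_p^1((X_i)_{2r+k+1}^{\infty})=0$, which makes $\KU_p^0((X_i)_{2r+1}^\infty)\to\KU_p^0((X_i)_{2r+1}^{2r+k})$ surjective. That surjectivity is what lets you read off the scalar on the finite piece from \cref{decomposeatpositive}.
\item An automorphism of $C_p$ pulls $\lambda$ back to $\lambda^{\otimes g}$. So it acts on $(BC_p)^{n\lambda}$ only after a $p$-local identification $(BC_p)^{n\lambda^{\otimes g}}\simeq(BC_p)^{n\lambda}$. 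The paper is equally brief about the summand decomposition for $n<0$, so this is not a defect specific to your proof.
\end{itemize}
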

\begin{proof}
    From the description above the proposition, we see immediately that $P_n^{n+k}$ can be decomposed as $\bigvee_{i=1}^{p-1} (X_i)_{n}^{n+k}$ $p$-locally. Passing to the colimit, we get a stable decomposition 
    for $P_{n}$ as in the case of positive integers. Then we can use a similar proof as above, except that here we use induction with cofiber sequence $\Sigma^{2n-1} \sphere/p \rightarrow P_{2n-1} \rightarrow P_{2n+1}$ instead of $\Sigma^{\infty} (BC_p)^{2n} \rightarrow \Sigma^{\infty} BC_p \rightarrow P_{2n+1}$, where the base case for $n=0$ is covered by \cref{k1bcp}.
\end{proof}

The Adams-eigenvalue calculation now gives the following cohomology groups:

\begin{corollary} \label{computationatodd}
    For $k \in \mathbb{Z}$, we have
    \begin{equation*}
       \pi_{i,2k+1} b(\sphere_{\K(1)}) \cong \sphere_{\K(1)}^{2k+1-i}\left(P_{2k+1}\right) \cong
          \left\{
        \begin{aligned}
           & \mathbb{Z}/p^{v_p(k-m)+1},  & \text{if } i = 2m , \text{ with } m \neq k;\\
           & \mathbb{Z}_p, & \text{ if } i= 2k, 2k+1.\\
           & 0, & \text{otherwise.}
        \end{aligned}
        \right.
    \end{equation*}
\end{corollary}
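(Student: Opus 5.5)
We combine \cref{spoketocohomo} with the decompositions \cref{decomposeatpositive} and \cref{decomposeatnegative}. By \cref{spoketocohomo}, $\pi_{i,2k+1}b(\sphere_{\K(1)})\cong \sphere_{\K(1)}^{2k+1-i}(P_{2k+1})$. Since $\sphere_{\K(1)}$ is $\K(1)$-local, $F(P_{2k+1},\sphere_{\K(1)})\simeq F(L_{\K(1)}P_{2k+1},\sphere_{\K(1)})$. This reduces the cohomology to the sum over $r=1,\dots,p-1$ of $\pi_{i-2k-1}$ of the duals. The dual of $\sphere_{\K(1)}[\omega^r]$ is $\sphere_{\K(1)}[\omega^{-r}]$, and as $r$ runs over $1,\dots,p-1$, so does $-r$ modulo $p-1$. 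Hence
\begin{equation*}
\pi_{i,2k+1}b(\sphere_{\K(1)})\cong \bigoplus_{r=1}^{p-1}\pi_{i-2k-1}\sphere_{\K(1)}[\omega^{r}].
\end{equation*}

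The second step is to compute $\pi_*\sphere_{\K(1)}[a]$ for $a=\omega^r$. I would use the fiber sequence $\sphere_{\K(1)}[a]\to \KU_p\otimes\sphere_{\K(1)}[a]\xrightarrow{\psi^g-1}\KU_p\otimes\sphere_{\K(1)}[a]$, where $\psi^g$ acts on $\pi_{2t}$ by $g^t a$. For $a=\omega^r$ we get the cokernel $\mathbb{Z}_p/(g^t\omega^r-1)$.
\begin{itemize}
\item If $t\not\equiv -r \pmod{p-1}$, then $g^t\omega^r\not\equiv 1 \pmod p$, so the element is a unit and the group vanishes.
\item If $t+r=s(p-1)$, write $g^t\omega^r=g^{t}\omega^{-t}\cdot\omega^{t+r}=(g\omega^{-1})^{t}$, with $g\omega^{-1}\in 1+p\mathbb{Z}_p$ a topological generator. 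Then $v_p((g\omega^{-1})^t-1)=v_p(t)+1$, which is finite when $t\neq0$. This gives $\pi_{2t-1}\cong\mathbb{Z}/p^{v_p(t)+1}$ and $\pi_{2t}=0$.
\item If $t=0$ and $r=p-1$, we get $\mathbb{Z}_p$ in degrees $0$ and $-1$, as for $\sphere_{\K(1)}$.
\end{itemize}
Summing over $r$, the degree $2t-1$ group is $\mathbb{Z}/p^{v_p(t)+1}$ for each $t\neq0$, and $\mathbb{Z}_p$ in degrees $0,-1$. Here the convention $v_p(0)=\infty$ is replaced by the $\mathbb{Z}_p$ case, and there is exactly one contributing $r$ for each $t$.

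Finally, reindex: $i-2k-1=2t-1$ means $i=2m$ with $t=m-k$, giving $\mathbb{Z}/p^{v_p(k-m)+1}$ for $m\neq k$. The degrees $0$ and $-1$ correspond to $i=2k+1$ and $i=2k$, each giving $\mathbb{Z}_p$. All other degrees vanish.

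The main obstacle is the bookkeeping of the eigenvalue conventions: whether $L_{\K(1)}P_{2k+1}$ or its dual carries $\omega^{r}$ versus $\omega^{-r}$, and the corresponding sign of $t$. I would handle this by noting that the full set $\{\omega^r\}$ is closed under inversion, so the sum is insensitive to the convention. Also $v_p(t)=v_p(-t)$, so the sign of $t$ does not affect the answer.
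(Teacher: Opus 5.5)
Your proposal is correct and follows the paper's route. The paper deduces this corollary directly from \cref{spoketocohomo}, the splitting $L_{\K(1)}P_{2k+1}\simeq\bigvee_{r=1}^{p-1}\sphere_{\K(1)}[\omega^r]$ of \cref{decomposeatpositive} and \cref{decomposeatnegative}, and the Adams-eigenvalue computation of $\pi_*\sphere_{\K(1)}[\omega^r]$; you have simply written that eigenvalue computation out in more detail than the paper does, namely the fiber of $\psi^g-1$, the valuation $v_p((g\omega^{-1})^t-1)=v_p(t)+1$, and the reindexing $t=m-k$.
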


\section{Computations for the even tails}\label{sec: evencom}

To compute the spoke-graded homotopy groups, it remains to compute $\sphere_{\K(1)}$-cohomology of $P_{2n}$. Consider the cofiber sequence
\begin{equation*}
    \sphere^{2n} \rightarrow P_{2n} \rightarrow P_{2n+1}.
\end{equation*}
Using the fact that $\sphere_{\K(1)}^{2m}(P_{2n+1})=0$ when $m\neq0$ and $\sphere_{\K(1)}^{2m}(\sphere^{2n})=0$ when $m\neq n$, we can break down the long exact sequence associated to the cofiber sequence above as short exact sequences of the following types (we are assuming $n\neq 0$):
  \begin{gather*}\sphere_{\K(1)}^{-1}(\sphere^{2n}) \stackrel{(\ast)}{\longrightarrow} \mathbb{Z}_p \rightarrow \sphere_{\K(1)}^0(P_{2n}) \rightarrow  0 \rightarrow \mathbb{Z}_p \rightarrow \sphere_{\K(1)}^1(P_{2n}) \rightarrow  \sphere_{\K(1)}^1(\sphere^{2n}) \rightarrow 0;\\
0 \rightarrow \sphere_{\K(1)}^{2n}(P_{2n}) \rightarrow \mathbb{Z}_p \rightarrow \sphere_{\K(1)}^{2n+1}(P_{2n+1}) \rightarrow \sphere_{\K(1)}^{2n+1}(P_{2n}) \rightarrow \mathbb{Z}_p \rightarrow 0;\\
0 \rightarrow \sphere_{\K(1)}^{2k+1}(P_{2n+1}) \rightarrow \sphere_{\K(1)}^{2k+1}(P_{2n}) \rightarrow  \sphere_{\K(1)}^{2k+1}(\sphere^{2n}) \rightarrow 0, \quad k \in \mathbb{Z} \setminus \{0,n\}.
\end{gather*}  
It remains to determine the extension classes in these short exact sequences. The first observation we make is that the map $(\ast)$ is the zero map:
\begin{lemma}
    For any $n \in \mathbb{Z}$, the map $\sphere_{\K(1)}^{-1}(\sphere^{2n}) \longrightarrow\sphere_{\K(1)}^0(P_{2n+1}) \cong \mathbb{Z}_p$ is the zero map.
\end{lemma}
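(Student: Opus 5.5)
The plan is to avoid any case analysis on the source group. I would show that the target $\sphere_{\K(1)}^0(P_{2n+1})$ injects into $p$-complete $K$-theory, and that every element of $\sphere_{\K(1)}^{-1}(\sphere^{2n}) \cong \pi_{2n+1}\sphere_{\K(1)}$ has zero $\KU_p$-Hurewicz image. Naturality then forces the map $(\ast)$ to vanish.

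\emph{Step 1: the map is a precomposition.} The map in question is the connecting homomorphism of the cofiber sequence $\sphere^{2n}\to P_{2n}\to P_{2n+1}$. It is precomposition with the boundary map $\delta\colon P_{2n+1}\to \sphere^{2n+1}$. Concretely, an element $x\in \sphere_{\K(1)}^{-1}(\sphere^{2n})$, viewed as a map $x\colon\sphere^{2n+1}\to \sphere_{\K(1)}$, is sent to $x\circ\delta \in \sphere_{\K(1)}^0(P_{2n+1})$.

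\emph{Step 2: the target injects into $\KU_p$-cohomology.} Use the fiber sequence
\begin{equation*}
\sphere_{\K(1)}\to \KU_p \xrightarrow{\psi^g-1}\KU_p .
\end{equation*}
Mapping $P_{2n+1}$ into it gives an exact sequence
\begin{equation*}
\KU_p^{-1}(P_{2n+1}) \to \sphere_{\K(1)}^0(P_{2n+1})\to \KU_p^0(P_{2n+1}).
\end{equation*}
By the computation in the proof of \cref{decomposeatpositive} (and its extension in \cref{decomposeatnegative}), $\KU_p^{\mathrm{odd}}(P_{2n+1})=0$. Hence the Hurewicz-type map $h\colon\sphere_{\K(1)}^0(P_{2n+1})\to \KU_p^0(P_{2n+1})$ is injective. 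As a consistency check, its image is the $\psi^g$-fixed part, namely the $\omega^{p-1}=1$ eigenline. This recovers $\sphere_{\K(1)}^0(P_{2n+1})\cong\mathbb Z_p$.

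\emph{Step 3: the composite has zero $K$-theory image.} By naturality, $h(x\circ\delta) = u\circ x\circ\delta$, where $u\colon \sphere_{\K(1)}\to\KU_p$ is the unit. The class $u\circ x$ lies in $\pi_{2n+1}\KU_p=0$, since this is an odd degree. So $h(x\circ\delta)=0$, and injectivity from Step 2 gives $x\circ\delta=0$.

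This argument works uniformly in $n$. In particular it handles the only case where the source is not torsion, namely $n=-1$, where $\pi_{-1}\sphere_{\K(1)}\cong\mathbb Z_p$ is generated by $\zeta$. For every other $n$ one could also conclude directly, since a torsion group has no nonzero maps to $\mathbb Z_p$.

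The main point requiring care is Step 2. One must check that the odd $\KU_p$-cohomology of $P_{2n+1}$ vanishes for all $n\in\mathbb Z$, including negative $n$, where $P_{2n+1}$ is defined via James periodicity as a colimit. For $n\ge 0$ this is stated in the proof of \cref{decomposeatpositive}. For $n<0$ I would deduce it from \cref{decomposeatnegative}: there $L_{\K(1)}P_{2n+1}$ is a wedge of the invertible spectra $\sphere_{\K(1)}[\omega^i]$, each of which has $\KU_p$-cohomology concentrated in even degrees. One also needs that $\KU_p$-cohomology only sees the $\K(1)$-localization, which is true after $p$-completion.
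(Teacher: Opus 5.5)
Your proof is correct, and it takes a different route from the paper's. The paper uses the $p$-local splitting $P_{2n+1}\simeq\bigvee_{i}(X_i)_{2n+1}^{\infty}$. The boundary map $\delta$ factors through the single summand with $i\equiv n \pmod{p-1}$, and a congruence count finishes: $\pi_{2n+1}\sphere_{\K(1)}\neq 0$ forces $n\equiv -1$, whereas $\sphere_{\K(1)}^0((X_i)_{2n+1}^{\infty})\cong\pi_0\sphere_{\K(1)}[\omega^i]$ is nonzero only for $i=p-1$, i.e.\ $n\equiv 0 \pmod{p-1}$. You instead detect the target in $\KU_p$-cohomology. Your only inputs are the fiber sequence $\sphere_{\K(1)}\to\KU_p\xrightarrow{\psi^g-1}\KU_p$ (the same sequence underlies the paper's computation of $\sphere_{\K(1)}^*((X_i)^{kq+2i})$) and the vanishing of $\KU_p^{\mathrm{odd}}(P_{2n+1})$. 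You therefore never need the idempotent splitting or the identification of the individual Picard summands.

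The paper's version has the advantage of recording which summand carries the bottom-cell boundary, an observation it reuses in \cref{attachcell} and \cref{finite2ncoskeleton}. Yours needs less input and transfers verbatim to finite skeleta. Since $\KU_p^*(P_{2n+1}^{2n+(l+1)q})$ is also concentrated in even degrees, the same Hurewicz argument shows that the map $(\ast)$ in the proof of \cref{finite2ncoskeleton} vanishes. There both source and target are torsion, so the vanishing is not automatic.

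One optional simplification: for $n<0$ you can get $\KU_p^{\mathrm{odd}}(P_{2n+1})=0$ without the sketched proof of \cref{decomposeatnegative}. Argue by downward induction along $\Sigma^{2n-1}\sphere/p\to P_{2n-1}\to P_{2n+1}$, using that $\KU_p^*(\Sigma^{2n-1}\sphere/p)$ is concentrated in even degrees.
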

\begin{proof}
The map introduced in the lemma is induced by the boundary map $P_{2n+1} \rightarrow \Sigma \sphere^{2n}$. Let $i$ be the positive integer $1 \leq i \leq p-1$ such that $i\equiv n $ mod $p-1$. Then the boundary map factors through the projection $P_{2n+1} \rightarrow (X_i)_{2n+1}^{\infty}$ by the $p$-local cell decomposition of $P_{2n+1}$. 

Notice that $\sphere_{\K(1)}^{-1}(\sphere^{2n}) \neq 0$ if and only if $-1 \equiv n$ mod $p-1$ and $\sphere_{\K(1)}^0((X_i)_{2n+1}^{\infty}) \neq 0$ iff $0 \equiv n$ mod $p-1$. The two required congruences are incompatible, so the relevant summand is zero and the boundary map vanishes.
\end{proof}

 The following proposition will be helpful in analyzing the last two short exact sequences above: 

\begin{prop} \label{attachcell}
    Let $n$ be a nonzero integer and write $2n=p^ls$ with $(p,s)=1$ . Then we have the following commutative diagram
    % https://q.uiver.app/#q=WzAsOSxbMCwwLCJcXG1hdGhiYntTfV57Mm59Il0sWzEsMCwiUF97Mm59Il0sWzIsMCwiUF97Mm4rMX0iXSxbMCwxLCJcXG1hdGhiYntTfV57Mm59Il0sWzEsMSwiUF97Mm59XnsybisyKHAtMSkobCsxKX0iXSxbMiwxLCJQX3sybisxfV57Mm4rMihwLTEpKGwrMSl9Il0sWzAsMiwiXFxtYXRoYmJ7U31eezJufSJdLFsxLDIsIkNcXGJhcntcXGFscGhhX2x9Il0sWzIsMiwiXFxtYXRoYmJ7U31eezJuKzIocC0xKShsKzEpfSJdLFswLDFdLFsxLDJdLFswLDMsIiIsMix7ImxldmVsIjoyLCJzdHlsZSI6eyJoZWFkIjp7Im5hbWUiOiJub25lIn19fV0sWzMsNF0sWzQsNV0sWzQsMV0sWzUsMl0sWzMsNiwiIiwyLHsibGV2ZWwiOjIsInN0eWxlIjp7ImhlYWQiOnsibmFtZSI6Im5vbmUifX19XSxbNiw3XSxbNCw3XSxbNyw4XSxbNSw4XV0=
\[\begin{tikzcd}
	{\mathbb{S}^{2n}} & {P_{2n}} & {P_{2n+1}} \\
	{\mathbb{S}^{2n}} & {P_{2n}^{2n+q(l+1)}} & {P_{2n+1}^{2n+q(l+1)}} \\
	{\mathbb{S}^{2n}} & {\Sigma^{2n} C\bar{\alpha}_{l+1}} & {\mathbb{S}^{2n+q(l+1)}}
	\arrow[from=1-1, to=1-2]
	\arrow[equals, from=1-1, to=2-1]
	\arrow[from=1-2, to=1-3]
	\arrow[from=2-1, to=2-2]
	\arrow[equals, from=2-1, to=3-1]
	\arrow[from=2-2, to=1-2]
	\arrow[from=2-2, to=2-3]
	\arrow[from=2-2, to=3-2]
	\arrow[from=2-3, to=1-3]
	\arrow[from=2-3, to=3-3]
	\arrow[from=3-1, to=3-2]
	\arrow[from=3-2, to=3-3]
\end{tikzcd}\]
The horizontal sequences are cofiber sequences, and $\bar{\alpha}_{l+1}$ represents a generator of $ \pi_{(l+1)q-1}(\sphere_{\K(1)})\cong \mathbb{Z}/p^{v_p(l+1)+1}$.
\end{prop}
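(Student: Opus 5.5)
The plan is to build the diagram in two stages. First I compare $P_{2n}$ with the finite skeleton $P_{2n}^{2n+q(l+1)}$. Then I collapse everything strictly between the bottom cell and the top cell, so that only a two-cell complex $\Sigma^{2n}C\bar\alpha_{l+1}$ remains. Finally I identify its attaching map through the $e$-invariant, computed from Adams operations on $\KU_p$.

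The top row and the middle row, together with the vertical maps between them, are formal. The skeletal inclusion $P_{2n}^{2n+q(l+1)}\to P_{2n}$ restricts to the identity on the bottom cell $\sphere^{2n}$, and it induces the map of cofibers $P_{2n+1}^{2n+q(l+1)}\to P_{2n+1}$. For $n<0$ I interpret these skeleta via James periodicity, exactly as in the construction used for \cref{decomposeatnegative}.

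For the bottom row I pass to the $p$-local splitting (\ref{Pndecom}). Let $1\le i\le p-1$ with $i\equiv n$ mod $p-1$.
\begin{itemize}
\item The bottom cell $\sphere^{2n}$ lies in the summand $(X_i)_{2n}^{2n+q(l+1)}$.
\item The maps $\sphere^{2n}\to P_{2n}^{2n+q(l+1)}$ and $P_{2n}^{2n+q(l+1)}\to P_{2n+1}^{2n+q(l+1)}$ are compatible with this splitting.
\item So it suffices to produce the vertical maps out of the $X_i$-summand and to extend them by zero on the other summands.
\end{itemize}
In $(X_i)_{2n}^{2n+q(l+1)}$ the cells are in dimensions $2n$ and $2n+qj-1,\,2n+qj$ for $1\le j\le l+1$. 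Each odd cell $2n+qj-1$ is attached to the cell directly below it in degree $0$. Each even cell $2n+qj$ is attached to the odd cell $2n+qj-1$ by a map of degree $p$ up to a unit. This follows from the $\mathbb{Z}$-cohomology of $BC_p$, where $H^{2*}$ is $\mathbb{Z}/p$ and $H^{\mathrm{odd}}$ vanishes.

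The middle cells therefore assemble, $p$-locally, into a complex built from mod $p$ Moore spectra. I then want the projection $P_{2n+1}^{2n+q(l+1)}\to\sphere^{2n+q(l+1)}$ onto the top cell, and a map $P_{2n}^{2n+q(l+1)}\to\Sigma^{2n}C\bar\alpha_{l+1}$ lying over it. The route I would try first is to construct both maps by collapsing the intermediate Moore-spectrum cells. If genuine collapse maps do not exist, I would instead construct them after $\K(1)$-localization, which is all that is used later. \cref{decomposeatpositive} and \cref{decomposeatnegative} identify $L_{\K(1)}(X_i)_{2n+1}^{\infty}$ with an invertible spectrum, and I expect the finite skeleton $L_{\K(1)}(X_i)_{2n+1}^{2n+q(l+1)}$ to be detected on its top cell in the same way. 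Composing with the inclusion of the bottom cell and taking cofibers then gives the bottom row.

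It remains to show that the composite attaching map $\sphere^{2n+q(l+1)-1}\to\sphere^{2n}$ is a generator of $\pi_{(l+1)q-1}\sphere_{\K(1)}$. My approach is to compute its Adams $e$-invariant from $\psi^g$ acting on $\KU_p^0$ of the two-cell complex. Via the Thom isomorphism $\KU_p^*(P_{2n})\cong\KU_p^{*-2n}({BC_p}_+)$ and the Atiyah--Segal description $\mathbb{Z}_p[t]/(t^p-1)$, the operation $\psi^g$ acts on the Thom class of $n\lambda$ through the cannibalistic class. On the eigen-line for the top cell this produces a factor of the form $g^{n}\cdot(\text{unit})$ compared with $g^{n+(p-1)(l+1)}$. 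The $e$-invariant is then the class of
\[
\frac{g^{(p-1)(l+1)}-1}{p^{\,v_p(l+1)+1}}
\]
relative to the bottom cell, adjusted by the $v_p(n)$-dependent contribution coming from the Thom class. The choice of truncation length $l+1$, where $2n=p^ls$, is exactly what should make this a $p$-adic unit modulo the appropriate power of $p$. Since the $e$-invariant detects $\pi_{*}\sphere_{\K(1)}$ in these degrees, this forces the attaching map to be $\bar\alpha_{l+1}$ up to a unit.

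This valuation bookkeeping is the step I expect to be the main obstacle. The difficulty is to check that the contributions of the lower Moore cells to $\psi^g$ cancel, and that the Thom class correction shifts the valuation by precisely $l$. I would verify it first for $n>0$ using the explicit basis $\{v_i\}$ from \cref{fixpoint}, and then transport the result to $n<0$ by James periodicity.
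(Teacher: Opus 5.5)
\textbf{There is a genuine gap.} You never establish that a map $P_{2n}^{2n+q(l+1)}\to\Sigma^{2n}C\bar\alpha_{l+1}$ exists which is the identity on $\sphere^{2n}$ and covers the top-cell collapse; your proposal leaves this conditional. The cells strictly between the bottom and the top do not form a subcomplex, so there is nothing formal to collapse. Such a map exists exactly when the bottom cell splits off the $(2n+q(l+1)-1)$-skeleton, i.e.\ when $\sphere^{2n}\to P_{2n}^{2n+q(l+1)-1}$ has a left inverse $\rho$. Given $\rho$, the bottom row is the pushout along $\rho$ of the cofiber sequence $\sphere^{2n+q(l+1)-1}\xrightarrow{\partial}P_{2n}^{2n+q(l+1)-1}\to P_{2n}^{2n+q(l+1)}$, with attaching map $\rho\circ\partial$.

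This coreducibility is the real content of the proposition, and it is exactly where $v_p(2n)=l$ is used. It is a $J$-theoretic statement about the Thom spectrum of $n\lambda$ over a finite skeleton. The paper handles it as follows:
\begin{itemize}
\item it uses \cite{gonzalez1996classification} to arrange $n\equiv 0$ mod $p-1$, so that the bottom cell lies in $\Sigma^\infty B\Sigma_p$;
\item it takes $\rho$ from \cite[Chapter V, Theorem 2.6]{bruner2006h};
\item it gets $\rho\circ\partial\simeq\bar\alpha_{l+1}$ by dualizing \cite[Chapter V, Proposition 2.17]{bruner2006h}.
\end{itemize}
Cell-by-cell obstruction theory controlled by $e$-invariants cannot replace the second step, since for $l\ge p-1$ the obstruction groups include $\pi^s_{pq-2}\ni\beta_1$. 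In your outline, $l$ enters only as an unspecified ``Thom class correction'' to an $e$-invariant, so the hypothesis is never used where it is needed. Your passage to the summand $X_i$ and the formal top two rows are fine. Once $\rho$ is in hand, an $e$-invariant computation could legitimately replace the citation of Proposition 2.17.

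\textbf{The $\K(1)$-local fallback does not close the gap.}
\begin{itemize}
\item It would only give a $\K(1)$-local version of the diagram, which is weaker than the statement.
\item Even $\K(1)$-locally, the retraction exists only if the connecting map $\sphere_{\K(1)}^{2n}(\sphere^{2n})\to\sphere_{\K(1)}^{2n+1}(P_{2n+1}^{2n+q(l+1)-1})$ kills $1$. Its target surjects onto $\sphere_{\K(1)}^{2n+1}(P_{2n+1}^{2n+lq})\cong\mathbb{Z}/p^{l}$ by \cref{Xinoextension}, so the vanishing is a genuine claim. Via the Thom isomorphism, it asks (for $n>0$) for $v$ of augmentation $1$ in $\KU_p^0$ of the $((l+1)q-1)$-skeleton of $BC_p$ with $\theta^g(n\lambda)\,\psi^g(v)=g^{n}v$, where $\theta^g$ is the cannibalistic class. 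That is an Adams $J'$-type computation, and your proposal does not carry it out.
\item Your expectation that $L_{\K(1)}(X_i)_{2n+1}^{2n+q(l+1)}$ is ``detected on its top cell'', like the invertible $L_{\K(1)}(X_i)_{2n+1}^{\infty}$, is false. The finite skeleton is $\K(1)$-locally torsion, with cohomology as in \cref{Xinoextension}.
\end{itemize}

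\textbf{The $e$-invariant step itself is not done.} The ratio $\bigl(g^{(p-1)(l+1)}-1\bigr)/p^{v_p(l+1)+1}$ is merely a unit. The $e$-invariant is the off-diagonal coefficient of $\psi^g$ on $\KU_p^0(\Sigma^{2n}C\bar\alpha_{l+1})$ divided by $g^{(p-1)(l+1)}-1$. It can only be extracted after the map to the two-cell complex exists, because the top-cell class in $\KU_p^0(P_{2n}^{2n+q(l+1)})$ is torsion and so has no eigen-line there.
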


\begin{proof}
    The upper half of the diagram comes from the cell structure of $P_{2n}$. For the lower half part, it remains to identify the primary co-attaching map out of $\sphere^{2n}$ as $\bar{\alpha}_{l+1}$.
    
    Let $e$ be a positive integer such that $(p-1)^e \equiv 1$
    mod $p^{l+1}$. By \cite[Theorem 1.1]{gonzalez1996classification}, we have an isomorphism
    \begin{equation*}
        \Sigma^{2n(p-1)^e-2n} P_{2n}^{2n+q(l+1)} \cong P_{2n(p-1)^e}^{2n(p-1)^e+q(l+1)}.
    \end{equation*}
    Hence we can assume $2n=2k(p-1)$. By the $p$-local decomposition of $P_{2n}$, the bottom cell $\sphere^{2n}$ 
    is only attached to $X_{p-1}=\Sigma^{\infty}B\Sigma_p$, so it is enough to construct the diagram after replacing each occurrence of $P$ by $B\Sigma_p$. From \cite[Chapter $V$, Theorem $2.6$]{bruner2006h}, the bottom cell inclusion $\sphere^{2n} \rightarrow {(B\Sigma_p)_{2n}^{2n+q(l+1)-1}} $ has a left inverse, denoted by $\rho$; then we have the following commutative diagram:
    % https://q.uiver.app/#q=WzAsMTEsWzEsMCwiXFxtYXRoYmJ7U31eezJufSJdLFsyLDAsIlBfezJufV57Mm4rMihwLTEpKGwrMSl9Il0sWzMsMCwiUF97Mm4rMX1eezJuKzIocC0xKShsKzEpfSJdLFsxLDEsIlBfezJufV57Mm4rMihwLTEpKGwrMSktMX0iXSxbMiwxLCJQX3sybn1eezJuKzIocC0xKShsKzEpfSJdLFszLDEsIlxcbWF0aGJie1N9XnsybisyKHAtMSkobCsxKX0iXSxbMSwyLCJcXG1hdGhiYntTfV57Mm59Il0sWzIsMiwiWCJdLFszLDIsIlxcbWF0aGJie1N9XnsybisyKHAtMSkobCsxKX0iXSxbMCwxLCJcXG1hdGhiYntTfV57Mm4rMihwLTEpKGwrMSktMX0iXSxbMCwyLCJcXG1hdGhiYntTfV57Mm4rMihwLTEpKGwrMSktMX0iXSxbMCwxXSxbMSwyXSxbMCwzXSxbMyw0XSxbNCw1XSxbNCwxLCIiLDEseyJsZXZlbCI6Miwic3R5bGUiOnsiaGVhZCI6eyJuYW1lIjoibm9uZSJ9fX1dLFsyLDVdLFszLDYsInAiLDJdLFs2LDddLFs0LDddLFs3LDhdLFs1LDgsIiIsMix7ImxldmVsIjoyLCJzdHlsZSI6eyJoZWFkIjp7Im5hbWUiOiJub25lIn19fV0sWzksMywiXFxwYXJ0aWFsIl0sWzksMTAsIiIsMCx7ImxldmVsIjoyLCJzdHlsZSI6eyJoZWFkIjp7Im5hbWUiOiJub25lIn19fV0sWzEwLDYsInAgXFxjaXJjIFxccGFydGlhbCJdXQ==
\[\begin{tikzcd}
	& {\mathbb{S}^{2n}} & {(B\Sigma_p)_{2n}^{2n+q(l+1)}} & {(B\Sigma_p)_{2n+1}^{2n+q(l+1)}} \\
	{\mathbb{S}^{2n+q(l+1)-1}} & {(B\Sigma_p)_{2n}^{2n+q(l+1)-1}} & {(B\Sigma_p)_{2n}^{2n+q(l+1)}} & {\mathbb{S}^{2n+q(l+1)}} \\
	{\mathbb{S}^{2n+q(l+1)-1}} & {\mathbb{S}^{2n}} & X & {\mathbb{S}^{2n+q(l+1)}}
	\arrow[from=1-2, to=1-3]
	\arrow[from=1-2, to=2-2]
	\arrow[from=1-3, to=1-4]
	\arrow[from=1-4, to=2-4]
	\arrow["\partial", from=2-1, to=2-2]
	\arrow[equals, from=2-1, to=3-1]
	\arrow[from=2-2, to=2-3]
	\arrow["\rho"', from=2-2, to=3-2]
	\arrow[equals, from=2-3, to=1-3]
	\arrow[from=2-3, to=2-4]
	\arrow[from=2-3, to=3-3]
	\arrow[equals, from=2-4, to=3-4]
	\arrow["{\rho \circ \partial}", from=3-1, to=3-2]
	\arrow[from=3-2, to=3-3]
	\arrow[from=3-3, to=3-4]
\end{tikzcd}\]
We want to show that $\rho \circ \partial \simeq \bar{\alpha}_{l+1}$, and this can be obtained by dualizing \cite[Chapter $V$, Proposition $2.17$]{bruner2006h}.
\end{proof}

To understand the extension classes, we first compute the lower part of the diagram, and use it to understand the top row.

\begin{prop}
    Let $n \in \mathbb{Z}$ and $k$ be a non-negative integer. Then we have
    \begin{equation*}
        \sphere_{\K(1)}^m\left((X_i)_{nq+2i-1}^{(n+k)q+2i}\right) \cong
        \left\{
        \begin{aligned}
           & \mathbb{Z}/p^{\min\{k+1, v_p(a)+1 \} }, \quad & \text{if } m= 2a+1  \text{ or } 2a \text{ with } a \equiv i \mod{p-1};\\
          &0, &\text{otherwise.}\\
        \end{aligned}
        \right.
    \end{equation*}
\end{prop}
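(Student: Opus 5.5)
The plan is to identify the $\K(1)$-localization of the finite spectrum $(X_i)_{nq+2i-1}^{(n+k)q+2i}$ as a Moore spectrum on an invertible $\K(1)$-local spectrum. Then compute its $\sphere_{\K(1)}$-cohomology from the homotopy of $\sphere_{\K(1)}[\omega^{\pm i}]$, which we read off from the Adams operation on $\KU_p$.

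First, the cell structure. By the proof of \cref{decomposeatpositive} (and its James-periodic extension used in \cref{decomposeatnegative}), $X_i$ has one cell in each dimension $\equiv 2i-1,2i \pmod q$. Hence the spectrum in question has exactly $k+1$ pairs of cells, in dimensions $2a-1,2a$ with $a=n(p-1)+i,\dots,(n+k)(p-1)+i$. Each pair is a mod $p$ Moore spectrum. Consider the cofiber sequence
\[
(X_i)_{nq+2i-1}^{(n+k)q+2i}\longrightarrow (X_i)_{nq+2i-1}^{\infty}\longrightarrow (X_i)_{(n+k+1)q+2i-1}^{\infty}.
\]
By \cref{decomposeatpositive} and \cref{decomposeatnegative}, both infinite tails become $\sphere_{\K(1)}[\omega^{-i}]$ after $\K(1)$-localization. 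The second map is therefore an element $f\in\pi_0\operatorname{End}(\sphere_{\K(1)}[\omega^{-i}])\cong\pi_0\sphere_{\K(1)}\cong\mathbb{Z}_p$ (here $p$ is odd). To pin down $f$, apply $\KU_p^0$. The finite piece has $\KU_p^0$ of order $p^{k+1}$, with $\KU_p^1=0$, by induction over the $k+1$ Moore pieces, each having $\KU^0_p=\mathbb{Z}/p$. The proof of \cref{decomposeatpositive} shows the $\KU_p$-cohomology long exact sequence is short exact. So $f^*$ is an injection $\mathbb{Z}_p\to\mathbb{Z}_p$ with cokernel of order $p^{k+1}$, and $f=u\,p^{k+1}$ for a unit $u$. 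Consequently
\[
L_{\K(1)}(X_i)_{nq+2i-1}^{(n+k)q+2i}\simeq \Sigma^{-1}\sphere_{\K(1)}[\omega^{-i}]/p^{k+1}.
\]

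Next, the cohomology. We have $F(\sphere_{\K(1)}[\omega^{-i}],\sphere_{\K(1)})\simeq \sphere_{\K(1)}[\omega^{i}]$. Hence $\sphere_{\K(1)}^m$ of our spectrum is $\pi_{-m-1}$ of $\sphere_{\K(1)}[\omega^{i}]/p^{k+1}$, up to a shift of one that must be tracked carefully. To compute $\pi_*\sphere_{\K(1)}[\omega^i]$, use the fiber sequence $\sphere_{\K(1)}[\omega^i]\to\KU_p\xrightarrow{\psi^g-\omega^i}\KU_p$. On $\pi_{2t}$ the map is multiplication by $g^t-\omega^i$. This is a unit unless $t\equiv i\pmod{p-1}$. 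In that case $g^t-\omega^i=\omega^i\bigl((g\omega^{-1})^t-1\bigr)$, which has valuation $v_p(t)+1$, since $g\omega^{-1}$ topologically generates $1+p\mathbb{Z}_p$. So $\pi_*\sphere_{\K(1)}[\omega^i]$ is $\mathbb{Z}/p^{v_p(t)+1}$ in degree $2t-1$ for $t\equiv i$, and zero otherwise (with $\mathbb{Z}_p$ in two degrees when $t=0$ is allowed, consistent with the convention $v_p(0)=\infty$). Smashing with $\sphere/p^{k+1}$, the universal coefficient sequence turns each $\mathbb{Z}/p^{v+1}$ in degree $2t-1$ into $\mathbb{Z}/p^{\min\{k+1,v+1\}}$ in two adjacent degrees, namely $2t-1$ and $2t$. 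Translating back through the shift and the dualization then produces the groups $\mathbb{Z}/p^{\min\{k+1,v_p(a)+1\}}$ in degrees $m=2a,2a+1$ with $a\equiv i$, and zero elsewhere.

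The main obstacle is the bookkeeping of degrees and signs. This covers the suspension in the fiber sequence, the dualization $\omega^{-i}\leftrightarrow\omega^{i}$, and whether the index is $t$ or $-t$; together these decide that the congruence reads $a\equiv i$ and the valuation is $v_p(a)$. I would check the normalization against \cref{computationatodd}, which is the $k\to\infty$ limit: there the $\K(1)$-local spectrum is the full invertible spectrum, and the same computation must reproduce $\mathbb{Z}/p^{v_p(\cdot)+1}$ in the indicated degrees. A secondary point is the claim that the cofiber map is exactly $p^{k+1}$ times a unit. This relies on $\pi_0\sphere_{\K(1)}$ being torsion-free at odd $p$, so that the $\KU_p^0$ computation determines $f$.
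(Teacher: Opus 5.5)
Your route is correct and genuinely different from the paper's. The paper never identifies the $\K(1)$-local homotopy type of the stunted piece. For $n=0$ it computes $\KU_p^*((X_i)^{kq+2i})\cong\mathbb{Z}/p^{k+1}$ in even degrees. It then uses the cofiber sequence $(X_i)^{kq+2i}\to X_i\to(X_i)^{\infty}_{kq+2i+1}$ to see that $\psi^g$ acts on $\KU_p^{2a}$ by $\omega^i g^{-a}$. Finally it applies the fiber sequence $\sphere_{\K(1)}\to\KU_p\xrightarrow{\psi^g-1}\KU_p$ in cohomology, which gives a four-term exact sequence whose middle map is $\times p^{v_p(a)+1}$ on $\mathbb{Z}/p^{k+1}$ when $a\equiv i$, and an isomorphism otherwise. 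General $n$ is reached by a further cofiber sequence and James periodicity. You instead prove $L_{\K(1)}Y\simeq\Sigma^{-1}\sphere_{\K(1)}[\omega^{-i}]/p^{k+1}$ for $Y=(X_i)^{(n+k)q+2i}_{nq+2i-1}$, uniformly in $n$ once \cref{decomposeatpositive} and \cref{decomposeatnegative} are granted. That is a stronger statement, and it also controls maps between stunted pieces. It costs you the extra step $f=up^{k+1}$ (via $\pi_0\sphere_{\K(1)}\cong\pi_0\KU_p$). It also forces you to fix Picard-group conventions, which the paper's direct computation with $F(Y,\KU_p)$ never needs.

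The bookkeeping you flagged does need correcting, in two places.

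First, there is no shift. Since $D(\sphere/p^{k+1})\simeq\Sigma^{-1}\sphere/p^{k+1}$, one has $F(L_{\K(1)}Y,\sphere_{\K(1)})\simeq\sphere_{\K(1)}[\omega^{i}]/p^{k+1}$. So $\sphere^m_{\K(1)}(Y)$ is $\pi_{-m}$ of this, not $\pi_{-m-1}$.

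Second, the fiber sequence has the wrong twist. In the paper's (homology) convention, $\psi^g$ acts on $\pi_{2t}(\KU_p\otimes\sphere_{\K(1)}[\omega^i])^{\wedge}_p$ by $\omega^i g^t$. So $\sphere_{\K(1)}[\omega^i]$ is the fiber of $\omega^i\psi^g-1$ on $\KU_p$, not of $\psi^g-\omega^i$; the latter fiber is $\sphere_{\K(1)}[\omega^{-i}]$. Its homotopy is $\mathbb{Z}/p^{v_p(t)+1}$ in degree $2t-1$ for $t\equiv -i\pmod{p-1}$.

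With both corrections, $-m\in\{2t-1,2t\}$ and $a=-t$ give exactly $m\in\{2a+1,2a\}$ with $a\equiv i$. With the shift and fiber sequence as you wrote them, one instead gets $m\in\{2a-1,2a\}$ with $a\equiv -i$.

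Your proposed normalization check against \cref{computationatodd} would not catch the sign. $P_{2k+1}$ is the sum over all $i$, so it cannot distinguish $a\equiv i$ from $a\equiv -i$. A check that does work is $k=0$. There $Y\simeq\Sigma^{2a-1}\sphere/p$ with $a\equiv i$, and $\pi_*\sphere_{\K(1)}/p\cong\wedge_{\mathbb{F}_p}[\delta]\otimes\mathbb{F}_p[v_1^{\pm1}]$ gives the answer directly. Inducting over the $k+1$ Moore pieces also shows the groups vanish outside degrees $2a,2a+1$ with $a\equiv i$. So only the orders need the Adams-operation input.
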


\begin{proof}
    First assume $n=0$, so we can omit the lower index since it is just the $\left(kq+2i\right)$-skeleton of $X_i$. From the Atiyah-Hirzebruch spectral sequence, it is easy to see that
    \begin{equation*}
        \KU_p^*\left((X_i)^{kq+2i} \right) \cong  
                \left\{
        \begin{aligned}
           & \mathbb{Z}/p^{k+1}  &*=2m;\\
          &0, &\text{otherwise.}\\
        \end{aligned}
        \right.
    \end{equation*}
    The cofiber sequence $(X_i)^{kq+2i} \rightarrow X_i \rightarrow (X_i)_{kq+2i+1}^{\infty}$ implies that the Adams operation on $\KU_p^{2a}((X_i)^{kq+2i})$ is scalar multiplication by $\omega^ig^{-a}$. 
   It is not an isomorphism if and only if $a \equiv i, \mod{p-1}$, and at this degree, we have    
    \begin{equation*}
        0\rightarrow \sphere_{\K(1)}^{2a}((X_i)^{kq+2i})\rightarrow \mathbb{Z}/p^{k+1} \stackrel{\times p^{1+v_p(a)}}{\longrightarrow}\mathbb{Z}/p^{k+1} \rightarrow \sphere_{\K(1)}^{2a+1}((X_i)^{kq+2i}) \rightarrow 0.
    \end{equation*}
    Hence, we have 
    \begin{equation*}
        \sphere_{\K(1)}^m\left((X_i)^{kq+2i}\right) \cong
         \left\{
        \begin{aligned}
           & \mathbb{Z}/p^{\min\{k+1, v_p(a)+1 \} }, \quad & \text{if } m= 2a+1  \text{ or } 2a \text{ with } a \equiv i,\mod{p-1};\\
          &0, &\text{otherwise.}\\
        \end{aligned}
        \right.
    \end{equation*} 
    For any $n \geq 0$, we just need to use the long exact sequence induced by the cofiber sequence $(X_i)^{(n-1)q+2i} \rightarrow (X_i)^{(n+k)q+2i} \rightarrow (X_i)_{nq+2i-1}^{(n+k)q+2i}$. When $n <0$, the computation is carried out by James periodicity. 
\end{proof}

\begin{corollary}\label{Xinoextension}
    Let $n \in \mathbb{Z}$ and $k$ be a non-negative integer. Using the stable decomposition of $P_{2n+1}^{2n+q(k+1)}$, we see that
    \begin{equation*}
        \sphere_{\K(1)}^m\left(P^{2n+(k+1)q}_{2n+1}\right) \cong
           \mathbb{Z}/p^{\min\{k+1, v_p(a)+1 \} }, \quad \text{for } m= 2a+1  \text{ or } 2a.\\
    \end{equation*}
\end{corollary}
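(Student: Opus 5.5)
The plan is to reduce the statement directly to the preceding proposition via the $p$-local stable splitting of stunted lens spectra.

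First I will use the decomposition \eqref{Pndecom} and its finite, James-periodic version from the proof of \cref{decomposeatnegative}. This gives $p$-locally
\[
P_{2n+1}^{2n+(k+1)q}\simeq \bigvee_{i=1}^{p-1} (X_i)_{2n+1}^{2n+(k+1)q},
\]
valid for all $n\in\mathbb{Z}$. Since $\sphere_{\K(1)}$ is $p$-local, $\sphere_{\K(1)}$-cohomology turns this wedge into a direct sum. It therefore suffices to identify each summand with one of the spectra appearing in the previous proposition.

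Next comes the cell bookkeeping. The cells of $P_{2n+1}^{2n+(k+1)q}$ occur in dimensions $2n+1,\dots,2n+(k+1)q$. They group into pairs $(2j-1,2j)$ for $n+1\le j\le n+(k+1)(p-1)$, and the pair $(2j-1,2j)$ lies in $X_i$ exactly when $j\equiv i \pmod{p-1}$. In a window of $(k+1)(p-1)$ consecutive values of $j$, each residue class mod $p-1$ occurs exactly $k+1$ times. For the class of $i$, these are $j=n'(p-1)+i,\dots,(n'+k)(p-1)+i$ for a suitable $n'\in\mathbb{Z}$. Hence
\[
(X_i)_{2n+1}^{2n+(k+1)q}=(X_i)_{n'q+2i-1}^{(n'+k)q+2i}.
\]
This uses that the cellular idempotents give $X_i$ the minimal cell structure described in the proof of \cref{decomposeatpositive}, and for $n'<0$ the James-periodic definition.

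Applying the previous proposition to each summand gives $\mathbb{Z}/p^{\min\{k+1,v_p(a)+1\}}$ in degrees $m=2a,2a+1$ with $a\equiv i \pmod{p-1}$, and zero otherwise. For a given $a$ there is exactly one $i\in\{1,\dots,p-1\}$ with $a\equiv i$, so exactly one summand contributes in degrees $2a$ and $2a+1$. This yields the claimed group in every degree, with odd and even degrees both covered.

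The only real obstacle is the index matching: I must check that the window $[2n+1,2n+(k+1)q]$ starts on an odd cell $2j-1$ and ends on an even cell. This holds because both $2n+1$ and $(k+1)q$ have the right parities. I must also check that each $X_i$-piece has precisely $k+1$ periods, which is what makes the exponent $k+1$ rather than $k$ or $k+2$. I must further confirm that, for $n<0$, the splitting of the finite stunted piece is compatible with the previous proposition's James-periodicity convention. That compatibility follows by choosing $r\equiv n \pmod{p^{k+1}}$ large and positive and suspending.
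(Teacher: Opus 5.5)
Your proposal is correct and matches the paper's argument: the paper proves the corollary only by citing the $p$-local splitting $P_{2n+1}^{2n+(k+1)q}\simeq\bigvee_i (X_i)_{2n+1}^{2n+(k+1)q}$ and the preceding proposition, and your bookkeeping (each summand is $(X_i)_{n'q+2i-1}^{(n'+k)q+2i}$ with exactly $k+1$ pairs of cells, and exactly one $i$ contributes in degrees $2a,2a+1$) fills in what the paper leaves implicit. Your treatment of $n<0$ also works: suspending along $r\equiv n \pmod{p^{k+1}}$ replaces $a$ by $a-(n-r)$, which leaves $\min\{k+1,v_p(a)+1\}$ unchanged, and any relabeling of the $X_i$ is harmless once you sum over $i$.
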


We remark that a similar calculation is done in \cite{thompson1990}, where he calculated the homolog groups $J_*(( B\Sigma_p)_{nq-1}^{mq})$.

\begin{prop} \label{Mark-toda}
    Let $\bar{\alpha}_m \in \pi_{mq-1}(\sphere_{\K(1)})$ be a generator and let $\alpha_m\in \pi_{mq-1}(\sphere_{\K(1)})$ be an element of order $p$ in $\pi_{mq-1}(\sphere_{\K(1)})$ with the relation $\alpha_m =p^{v_p(m)} \bar{\alpha}_m$ ($\alpha_0$ is not defined since $\pi_{-1} (\sphere_{\K(1)})$ is torsion free). Then we have
    \begin{equation*}
        \langle \bar{\alpha}_n, \alpha_m, p\rangle=\bar{\alpha}_{m+n}+p \pi_{(n+m)q-1}(\sphere_{\K(1)}), \quad m\neq0.
    \end{equation*}
\end{prop}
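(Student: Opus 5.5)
The plan is to compute the Toda bracket $\langle \bar{\alpha}_n, \alpha_m, p\rangle$ in $\pi_*\sphere_{\K(1)}$ through the $\KU_p$-based (equivalently, $\psi^g$-fixed point) description of $\sphere_{\K(1)}$. We use the fiber sequence
\begin{equation*}
\sphere_{\K(1)} \to \KU_p \xrightarrow{\psi^g-1} \KU_p .
\end{equation*}
Through it, the torsion class $\bar{\alpha}_m$ in degree $mq-1$ is the image under the connecting map $\delta\colon \Sigma^{-1}\KU_p \to \sphere_{\K(1)}$ of a suitable element of $\pi_{mq}\KU_p$. More precisely, it comes from the cokernel of $\psi^g-1$ on $\pi_{mq}\KU_p = \mathbb{Z}_p\{u^{m(p-1)}\}$. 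Since $\psi^g - 1$ acts there by multiplication by $g^{m(p-1)}-1$, which has valuation $v_p(m)+1$, this cokernel is cyclic of the correct order.

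\textbf{Step 1: reduce to the Moore spectrum.} The bracket $\langle \bar{\alpha}_n, \alpha_m, p\rangle$ is computed by extending $\alpha_m$ over the Moore spectrum $\Sigma^{mq-1}\sphere/p$ (possible since $p\alpha_m=0$), composing with the top-cell map, and then applying $\bar{\alpha}_n$. Equivalently, we choose a lift $\widetilde{\alpha}_m\colon \Sigma^{mq}\sphere/p \to \sphere_{\K(1)}$ whose restriction to the bottom cell is $\alpha_m$. The bracket is then the set of composites $\bar{\alpha}_n\circ \widetilde{\alpha}_m$ restricted along the top cell $\sphere^{mq}\to \Sigma^{mq}\sphere/p$. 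Its indeterminacy is $\bar{\alpha}_n\pi_{mq}+p\,\pi_{(n+m)q-1}$. Since $\pi_{mq}\sphere_{\K(1)}=0$ for $m\neq 0$, the indeterminacy is exactly $p\,\pi_{(n+m)q-1}$, as claimed. Some care is needed in the edge case $n=0$ (where $\bar{\alpha}_0\in\pi_{-1}$) and in the degree conventions, but it is routine.

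\textbf{Step 2: identify $\widetilde{\alpha}_m$ as a $v_1$-periodicity map.} In $\K(1)$-local homotopy, $\pi_*(L_{\K(1)}\sphere/p)=\mathbb{F}_p[v_1^{\pm1}]\otimes\Lambda(\alpha)$, and the Adams self-map $v_1^m$ on $\sphere/p$ (suitably $\K(1)$-localized) produces $\alpha_m$ as the composite bottom cell, then $v_1^m$, then pinch. We would therefore take $\widetilde{\alpha}_m$ to be $\text{pinch}\circ v_1^m$. The bracket then becomes the composite
\begin{equation*}
\sphere^{(n+m)q-1}\to \Sigma^{mq-1}\sphere/p\xrightarrow{v_1^m}\Sigma^{-1}\sphere/p\xrightarrow{\text{pinch}}\sphere^{0}\xrightarrow{\bar{\alpha}_n}\cdots .
\end{equation*}
Put differently, it is the image of $\bar{\alpha}_n$ reduced mod $p$, multiplied by $v_1^m$ and pushed back to the sphere. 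Under the identification $\pi_*\sphere_{\K(1)}\to\pi_*\KU_p$ via $\delta$, this amounts to the following. Take the $\KU_p$-class $u^{n(p-1)}$ representing $\bar{\alpha}_n$, reduce it mod $p$, and multiply by $v_1^m=u^{m(p-1)}$. This gives $u^{(n+m)(p-1)}$ mod $p$, which maps under $\delta$ to a generator of $\pi_{(n+m)q-1}\sphere_{\K(1)}$ modulo $p$. We will make this precise by working in $\KU_p$-modules. The fiber sequence for $\sphere_{\K(1)}$ tensored with $\sphere/p$ is compatible with multiplication by $u^{m(p-1)}$, since $\psi^g$ acts on $u^{m(p-1)}$ by $g^{m(p-1)}\equiv 1$ mod $p$. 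Hence $u^{m(p-1)}$ is a $\psi^g$-fixed unit mod $p$.

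\textbf{Main obstacle.} The main obstacle is Step 2 when $v_p(m)>0$, or when $v_p(n+m)$ differs from $v_p(n)$. We must check that the reduction mod $p$ followed by lifting does not lose a factor of $p$. In other words, we must show the result is a generator $\bar{\alpha}_{m+n}$ rather than a multiple $p^j\bar{\alpha}_{m+n}$. I would handle this by an explicit connecting-homomorphism computation in the long exact sequence for $\psi^g-1$ with $\mathbb{Z}/p^N$ coefficients for large $N$, comparing the elements $u^{k(p-1)}/(g^{k(p-1)}-1)$ and tracking valuations. The hypothesis $m\neq 0$ enters precisely here: it ensures that $\alpha_m$ exists and that the Moore-spectrum extension is a genuine $v_1^m$-multiplication.
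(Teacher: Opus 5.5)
Your choice of extension $\widetilde{\alpha}_m=\rho\circ v_1^m$ and your use of the mod $p$ reduction $\iota\circ\bar{\alpha}_n=v_1^n\delta$ are exactly the ingredients of the paper's proof, and your indeterminacy computation is correct. But the proposal does not prove the statement. The only nontrivial point is that the bracket is a generator rather than $p^j$ times one. You leave this as a ``main obstacle'' with an unexecuted plan: tracking valuations of $g^{k(p-1)}-1$ with $\mathbb{Z}/p^N$ coefficients.

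Part of the trouble is that your description of the bracket runs the wrong way. Take $\widetilde{\alpha}_m\colon\Sigma^{mq-1}\sphere_{\K(1)}/p\to\sphere_{\K(1)}$, with cells in dimensions $mq-1$ and $mq$, not $mq$ and $mq+1$. The bracket element $x\in\pi_{(n+m)q-1}\sphere_{\K(1)}$ is then characterized by $x\circ\rho=\bar{\alpha}_n\circ\widetilde{\alpha}_m$, suitably suspended. That is, $x$ is obtained by \emph{factoring} through the pinch map $\rho$. It is not obtained by precomposing with a map from the top-cell sphere into the Moore spectrum. The top cell is a quotient, and $K(1)$-locally $[\sphere^{mq},\Sigma^{mq-1}\sphere_{\K(1)}/p]=\pi_1(\sphere_{\K(1)}/p)=0$, so restricting along such a map could only give zero. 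Likewise, ``pushed back to the sphere'' is not an operation you have defined.

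The missing idea is to \emph{detect} $x$ modulo $p$ rather than to lift anything. Since $\ker(\iota_*)=p\,\pi_*\sphere_{\K(1)}$ and $\pi_{(n+m)q-1}\sphere_{\K(1)}$ is cyclic, $x$ is a generator if and only if $\iota\circ x\neq 0$. For this it suffices that $\iota\circ x\circ\rho\neq 0$.

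Now $\iota x\rho=\iota\bar{\alpha}_n\rho\, v_1^m$. Two facts finish the argument:
\begin{itemize}
\item $v_1^m$ is a $\K(1)$-local equivalence;
\item precomposition with $\rho$ is injective on maps into $\sphere_{\K(1)}/p$, because its kernel is $p$ times a group of maps into $\sphere_{\K(1)}/p$, which vanishes since the mod $p$ Moore spectrum has exponent $p$ at odd primes.
\end{itemize}
Hence $\iota x\rho\neq 0$ as soon as $\iota\bar{\alpha}_n\neq 0$, which holds because $\bar{\alpha}_n$ is a generator.

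The paper phrases the same computation inside $\pi_*\Map(\sphere_{\K(1)}/p,\sphere_{\K(1)}/p)\cong\Lambda_{\mathbb{F}_p}[\zeta,\beta]\otimes\mathbb{F}_p[v_1^{\pm1}]$. There it writes $\iota x\rho=v_1^n\delta\rho\, v_1^m=\beta\zeta v_1^{n+m}\neq 0$. No $p$-adic valuation of $n$, $m$ or $n+m$ ever enters. So the case distinction you flag as the obstacle does not arise, and the $\KU_p$ connecting-map bookkeeping is unnecessary. You should also note that $\rho v_1^m\iota$ is only a unit multiple of $\alpha_m$; it is nonzero and $p$-torsion because $\pi_{mq}\sphere_{\K(1)}=0$ for $m\neq 0$. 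This is harmless for the statement.
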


\begin{proof}
Let $\iota \colon \sphere_{\K(1)} \rightarrow \sphere_{\K(1)} /p$ be the bottom cell inclusion map and $\rho \colon \sphere_{\K(1)} /p \rightarrow \Sigma \sphere_{\K(1)}$ be the top cell projection map. From the construction of the Toda bracket, we have the following diagram
% https://q.uiver.app/#q=WzAsNyxbMCwwLCJcXG1hdGhiYntTfV97XFxLKDEpfV57KG4rbSlxLTJ9Il0sWzEsMCwiXFxtYXRoYmJ7U31fe1xcSygxKX1eeyhuK20pcS0yfSJdLFsxLDEsIlxcU2lnbWFeeyhuK20pcS0yfVxcbWF0aGJie1N9X3tcXEsoMSl9L3AiXSxbMSwyLCJcXG1hdGhiYntTfV97XFxLKDEpfV57KG4rbSlxLTF9Il0sWzIsMCwiXFxtYXRoYmJ7U31fe1xcSygxKX1ee25xLTF9Il0sWzMsMCwiXFxtYXRoYmJ7U31fe1xcSygxKX0iXSxbMywyLCJcXG1hdGhiYntTfV97XFxLKDEpfS9wIl0sWzAsMSwicCJdLFsxLDIsIlxcaW90YSJdLFsyLDMsIlxccmhvIl0sWzEsNCwiXFxhbHBoYV9tIl0sWzQsNSwiXFxiYXJ7XFxhbHBoYX1fbiJdLFszLDYsIlxcaW90YSBcXGNpcmMgXFxsYW5nbGUgXFxiYXJ7XFxhbHBoYX1fbiwgXFxhbHBoYV9tLCBwXFxyYW5nbGUiLDJdLFsyLDQsIlxcd2lkZXRpbGRle1xcYWxwaGFfbSBcXGNpcmMgXFxyaG99IiwxXSxbMyw1LCJcXGxhbmdsZSBcXGJhcntcXGFscGhhfV9uLCBcXGFscGhhX20sIHBcXHJhbmdsZSIsMl0sWzUsNiwiXFxpb3RhIl1d
\[\begin{tikzcd}
	{\mathbb{S}_{\K(1)}^{(n+m)q-2}} & {\mathbb{S}_{\K(1)}^{(n+m)q-2}} & {\mathbb{S}_{\K(1)}^{nq-1}} & {\mathbb{S}_{\K(1)}} \\
	& {\mathbb{S}^{(n+m)q-2}_{\K(1)}/p} \\
	& {\mathbb{S}_{\K(1)}^{(n+m)q-1}} && {\mathbb{S}_{\K(1)}/p}
	\arrow["p", from=1-1, to=1-2]
	\arrow["{\alpha_m}", from=1-2, to=1-3]
	\arrow["\iota", from=1-2, to=2-2]
	\arrow["{\bar{\alpha}_n}", from=1-3, to=1-4]
	\arrow["\iota", from=1-4, to=3-4]
	\arrow["{\widetilde{\alpha_m \circ p}}"{description}, from=2-2, to=1-3]
	\arrow["\rho", from=2-2, to=3-2]
	\arrow["{\langle \bar{\alpha}_n, \alpha_m, p\rangle}"', from=3-2, to=1-4]
	\arrow["{\iota \circ \langle \bar{\alpha}_n, \alpha_m, p\rangle}"', from=3-2, to=3-4]
\end{tikzcd}\]
To prove the Proposition, it is enough to show $\iota \circ \langle \bar{\alpha}_n, \alpha_m, p\rangle \neq 0$. Consider the following diagram 
% https://q.uiver.app/#q=WzAsNSxbMSwwLCJcXHBpXzBcXG9wZXJhdG9ybmFtZXtNYXB9KFxcbWF0aGJie1N9XntucS0yfV97XFxLKDEpfS9wLFxcbWF0aGJie1N9XntucS0xfV97XFxLKDEpfSkgIl0sWzEsMSwiXFxwaV8wXFxvcGVyYXRvcm5hbWV7TWFwfShcXG1hdGhiYntTfV57KG4rbSlxLTJ9X3tcXEsoMSl9L3AsXFxtYXRoYmJ7U31ee25xLTF9X3tcXEsoMSl9KSJdLFsyLDEsIlxccGlfMFxcb3BlcmF0b3JuYW1le01hcH0oXFxtYXRoYmJ7U31eeyhuK20pcS0yfV97XFxLKDEpfSxcXG1hdGhiYntTfV57bnEtMX1fe1xcSygxKX0pIl0sWzMsMSwiXFxwaV8wXFxvcGVyYXRvcm5hbWV7TWFwfShcXG1hdGhiYntTfV57KG4rbSlxLTJ9X3tcXEsoMSl9LFxcbWF0aGJie1N9XntucS0xfV97XFxLKDEpfSkiXSxbMCwxLCIwIl0sWzAsMSwiKHZfMV5tKV4qIiwyXSxbMSwyLCJcXGlvdGFeKiJdLFsyLDMsInBeKiJdLFs0LDFdXQ==
\[\begin{tikzcd}
	& {\pi_0\operatorname{Map}(\mathbb{S}^{nq-2}_{\K(1)}/p,\mathbb{S}^{nq-1}_{\K(1)}) } && \\
	0 & {\pi_0\operatorname{Map}(\mathbb{S}^{(n+m)q-2}_{\K(1)}/p,\mathbb{S}^{nq-1}_{\K(1)})} & {\pi_0\operatorname{Map}(\mathbb{S}^{(n+m)q-2}_{\K(1)},\mathbb{S}^{nq-1}_{\K(1)})} & {\pi_0\operatorname{Map}(\mathbb{S}^{(n+m)q-2}_{\K(1)},\mathbb{S}^{nq-1}_{\K(1)})}
	\arrow["{(v_1^m)^*}"', "{\cong}",from=1-2, to=2-2]
	\arrow[from=2-1, to=2-2]
	\arrow["{\iota^*}", from=2-2, to=2-3]
	\arrow["{p^*}", from=2-3, to=2-4]
\end{tikzcd}\] 
We see that one can choose a null-homotopy of $\alpha_m \circ p$ such that the following diagram commutes:
% https://q.uiver.app/#q=WzAsNSxbMCwwLCJcXG1hdGhiYntTfV97XFxLKDEpfV57KG4rbSlxLTJ9Il0sWzEsMCwiXFxtYXRoYmJ7U31fe1xcSygxKX1eeyhuK20pcS0yfSJdLFsxLDEsIlxcbWF0aGJie1N9XnsobittKXEtMn1fe1xcSygxKX0vcCJdLFsyLDAsIlxcbWF0aGJie1N9X3tcXEsoMSl9XntucS0xfSJdLFsyLDEsIlxcbWF0aGJie1N9XntucS0yfV97XFxLKDEpfS9wIl0sWzAsMSwicCJdLFsxLDIsIlxcaW90YSJdLFsxLDMsIlxcYWxwaGFfbSJdLFsyLDMsIlxcd2lkZXRpbGRle1xcYWxwaGFfbSBcXGNpcmMgXFxyaG99IiwxXSxbMiw0LCJ2XzFebSJdLFs0LDMsIlxccmhvIiwyXV0=
\[\begin{tikzcd}
	{\mathbb{S}_{\K(1)}^{(n+m)q-2}} & {\mathbb{S}_{\K(1)}^{(n+m)q-2}} & {\mathbb{S}_{\K(1)}^{nq-1}} \\
	& {\mathbb{S}^{(n+m)q-2}_{\K(1)}/p} & {\mathbb{S}^{nq-2}_{\K(1)}/p}
	\arrow["p", from=1-1, to=1-2]
	\arrow["{\alpha_m}", from=1-2, to=1-3]
	\arrow["\iota", from=1-2, to=2-2]
	\arrow["{\widetilde{\alpha_m \circ \rho}}"{description}, from=2-2, to=1-3]
	\arrow["{v_1^m}", from=2-2, to=2-3]
	\arrow["\rho"', from=2-3, to=1-3]
\end{tikzcd}\]
Recall that 
$\pi_{*} \sphere_{\K(1)}/p \cong \wedge_{\mathbb{F}_p}[\delta] \otimes \mathbb{F}_p[v_1^{\pm1}]$, where $|\delta|=-1$. The element $\delta$ can be constructed as $\iota \circ\zeta$ where $\zeta$ is a generator of $\pi_{-1}\sphere_{\K(1)}\cong\mathbb{Z}_p$ (notice that $\zeta$ is $\bar{\alpha}_0$ in our notation). From the relation $\bar{\alpha}_n \circ \iota \neq 0$, we can make a choice of $\zeta$ such that the following diagram commutes:
% https://q.uiver.app/#q=WzAsNCxbMCwwLCJcXG1hdGhiYntTfV97XFxLKDEpfV57bnEtMX0iXSxbMSwwLCJcXG1hdGhiYntTfV97XFxLKDEpfSJdLFswLDEsIlxcbWF0aGJie1N9X3tcXEsoMSl9XntucX0vcCJdLFsxLDEsIlxcbWF0aGJie1N9X3tcXEsoMSl9L3AiXSxbMCwxLCJcXGJhcntcXGFscGhhfV9uIl0sWzAsMiwiXFxkZWx0YSIsMl0sWzIsMywidl8xXm0iLDJdLFsxLDMsIlxcaW90YSJdXQ==
\[\begin{tikzcd}
	{\mathbb{S}_{\K(1)}^{nq-1}} & {\mathbb{S}_{\K(1)}} \\
	{\mathbb{S}_{\K(1)}^{nq}/p} & {\mathbb{S}_{\K(1)}/p}
	\arrow["{\bar{\alpha}_n}", from=1-1, to=1-2]
	\arrow["\delta"', from=1-1, to=2-1]
	\arrow["\iota", from=1-2, to=2-2]
	\arrow["{v_1^n}"', from=2-1, to=2-2]
\end{tikzcd}\]

As a ring, we have an isomorphism $
\pi_* \Map(\sphere_{\K(1)}/p, \sphere_{\K(1)}/p) \cong \wedge_{\mathbb{F}_p}[\zeta, \beta] \otimes \mathbb{F}_p[v_1^{\pm}]$, where $|\zeta|=|\beta|=-1$. The element $\zeta$ comes from the module structure of $\sphere_{\K(1)}/p$ over $\sphere_{\K(1)}$, and the element $\beta= \iota \circ \rho$. Hence, we have the following commutative diagram:
% https://q.uiver.app/#q=WzAsNSxbMCwxLCJcXG1hdGhiYntTfV97XFxLKDEpfV57bnEtMn0vcCJdLFsxLDAsIlxcbWF0aGJie1N9X3tcXEsoMSl9XntucS0xfSJdLFsyLDAsIlxcbWF0aGJie1N9X3tcXEsoMSl9XntucX0vcCJdLFsxLDEsIlxcbWF0aGJie1N9X3tcXEsoMSl9XntucX0iXSxbMCwyLCJcXG1hdGhiYntTfV97XFxLKDEpfV57bnEtMX0iXSxbMCwxLCJcXHJobyJdLFsxLDIsIlxcZGVsdGEiXSxbMSwzLCJcXHpldGEiLDJdLFszLDIsIlxcaW90YSJdLFswLDQsIlxcemV0YSIsMl0sWzQsMywiXFxyaG8iLDJdLFs0LDIsIlxcYmV0YSIsMix7ImN1cnZlIjozfV1d
\[\begin{tikzcd}
	& {\mathbb{S}_{\K(1)}^{nq-1}} & {\mathbb{S}_{\K(1)}^{nq}/p} \\
	{\mathbb{S}_{\K(1)}^{nq-2}/p} & {\mathbb{S}_{\K(1)}^{nq}} \\
	{\mathbb{S}_{\K(1)}^{nq-1}/p}
	\arrow["\delta", from=1-2, to=1-3]
	\arrow["\zeta"', from=1-2, to=2-2]
	\arrow["\rho", from=2-1, to=1-2]
	\arrow["\zeta"', from=2-1, to=3-1]
	\arrow["\iota", from=2-2, to=1-3]
	\arrow["\beta"', curve={height=18pt}, from=3-1, to=1-3]
	\arrow["\rho"', from=3-1, to=2-2]
\end{tikzcd}\]
Combining the preceding three diagrams, we obtain the following diagram
% https://q.uiver.app/#q=WzAsNSxbMCwxLCJcXG1hdGhiYntTfV97XFxLKDEpfV57KG4rbSlxLTF9Il0sWzIsMSwiXFxtYXRoYmJ7U31fe1xcSygxKX0vcCJdLFswLDAsIlxcbWF0aGJie1N9X3tcXEsoMSl9XnsobittKXEtMn0vcCJdLFsxLDAsIlxcbWF0aGJie1N9X3tcXEsoMSl9XntucS0yfS9wIl0sWzIsMCwiXFxtYXRoYmJ7U31fe1xcSygxKX1ee25xLTJ9L3AiXSxbMCwxLCJcXGlvdGEgXFxjaXJjIFxcbGFuZ2xlIFxcYmFye1xcYWxwaGF9X24sIFxcYWxwaGFfbSwgcFxccmFuZ2xlIl0sWzIsMCwiXFxyaG8iXSxbMiwzLCJ2XzFebSIsMl0sWzMsNCwiXFxiZXRhXFx6ZXRhIiwyXSxbNCwxLCJ2XzFebiIsMl1d
\[\begin{tikzcd}
	{\mathbb{S}_{\K(1)}^{(n+m)q-2}/p} & {\mathbb{S}_{\K(1)}^{nq-2}/p} & {\mathbb{S}_{\K(1)}^{nq-2}/p} \\
	{\mathbb{S}_{\K(1)}^{(n+m)q-1}} && {\mathbb{S}_{\K(1)}/p}
	\arrow["{v_1^m}"', from=1-1, to=1-2]
	\arrow["\rho", from=1-1, to=2-1]
	\arrow["{\beta\zeta}"', from=1-2, to=1-3]
	\arrow["{v_1^n}"', from=1-3, to=2-3]
	\arrow["{\iota \circ \langle \bar{\alpha}_n, \alpha_m, p\rangle}", from=2-1, to=2-3]
\end{tikzcd}\]
We finish the proof by observing that $\beta\zeta v_1^{n+m} \neq0$.
\end{proof}
This Toda bracket computation will be the main tool for solving extension problems. To begin, we compute the $\sphere_{\K(1)}$-cohomology of $C\bar{\alpha}_n$:
\begin{prop}\label{calphal}
  Let $n \geq 1$. We have the following isomorphism
    \begin{equation*}
          \sphere_{\K(1)}^*\left( C\bar{\alpha}_n\right) \cong
        \left\{
        \begin{aligned}
           & \mathbb{Z}_p, \quad &*=0, 1, 2n(p-1), 2n(p-1)+1;\\
          &  \mathbb{Z}/p^{v_p(m)+v_p(n+m)+2},  &*=2(m+n)(p-1)+1, \quad m \neq 0,-n ;\\
          &0, &\text{otherwise.}\\
        \end{aligned}
        \right.
    \end{equation*}
\end{prop}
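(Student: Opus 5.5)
The plan is to use the cofiber sequence
\begin{equation*}
\sphere^{nq-1} \xrightarrow{\bar{\alpha}_n} \sphere^0 \longrightarrow C\bar{\alpha}_n \longrightarrow \sphere^{nq}
\end{equation*}
and the long exact sequence it induces on $\sphere_{\K(1)}^*(-)$. Recall that $\pi_*\sphere_{\K(1)}$ is $\mathbb{Z}_p$ in degrees $0,-1$, cyclic of order $p^{v_p(m)+1}$ in degree $mq-1$ for $m\neq 0$, and zero otherwise. Hence $\sphere_{\K(1)}^{*}(\sphere^0)$ and $\sphere_{\K(1)}^{*}(\sphere^{nq})$ are each nonzero only in degrees congruent to $0,1 \bmod q$ relative to their bottom cell.

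The connecting map in the long exact sequence is precomposition (equivalently, multiplication) by $\bar{\alpha}_n$. Thus the sequence in degree $*$ reads
\begin{equation*}
\sphere_{\K(1)}^{*-1}(\sphere^0)\xrightarrow{\bar{\alpha}_n\cdot}\sphere_{\K(1)}^{*}(\sphere^{nq}) \to \sphere_{\K(1)}^*(C\bar{\alpha}_n)\to \sphere_{\K(1)}^*(\sphere^0)\xrightarrow{\bar{\alpha}_n\cdot}\sphere_{\K(1)}^{*+1}(\sphere^{nq}).
\end{equation*}
Products of two torsion classes $\alpha$'s vanish in $\pi_*\sphere_{\K(1)}$ for degree reasons: they would land in degree $\equiv -2 \bmod q$, which is zero. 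The only possibly nonzero multiplications by $\bar{\alpha}_n$ are on the classes $1$ and $\zeta=\bar\alpha_0$. Multiplication on $1$ hits $\bar{\alpha}_n$ in degree $nq-1$, which is torsion. Multiplication on $\zeta$ lands in degree $nq-2$, which is zero. So in the degrees where a $\mathbb{Z}_p$ sits in the source, the map $\mathbb{Z}_p\to \mathbb{Z}/p^{v_p(n)+1}$ is surjective. However, the relevant long-exact-sequence term then has the same kernel rank, giving the free $\mathbb{Z}_p$'s in degrees $0,1,nq,nq+1$. Here I must be careful: the free classes in degrees $0$ and $nq+1$ are untouched, and the surjection onto the torsion in degree $nq$ only changes the free class in degree $0$ to an index-$p^{v_p(n)+1}$ sublattice, which is still $\mathbb{Z}_p$.

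The remaining degrees are $*=(m+n)q+1$ with $m\neq 0,-n$. Here the sequence is a short exact sequence
\begin{equation*}
0\to \mathbb{Z}/p^{v_p(m+n)+1}\to \sphere_{\K(1)}^{(m+n)q+1}(C\bar{\alpha}_n)\to \mathbb{Z}/p^{v_p(m)+1}\to 0.
\end{equation*}
The left group comes from the top cell (the class $\bar\alpha_{m+n}$ shifted by $nq$); the right group comes from the bottom cell (the class $\bar\alpha_m$). In all other degrees both ends vanish, giving $0$.

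The main obstacle is showing that this extension is maximally nontrivial, i.e.\ that the middle group is cyclic of order $p^{v_p(m)+v_p(n+m)+2}$. The standard tool is that $p^{v_p(m)+1}$ times a lift of $\bar{\alpha}_m$ equals the Toda bracket $\langle \bar{\alpha}_n,\bar{\alpha}_m, p^{v_p(m)+1}\rangle$, modulo indeterminacy, viewed in the top-cell group. I would rewrite this bracket using $\bar{\alpha}_m\, p^{v_p(m)}=\alpha_m$ and juggling: $\langle \bar\alpha_n,\bar\alpha_m,p^{v_p(m)}\cdot p\rangle\supseteq\langle \bar\alpha_n,\alpha_m,p\rangle$. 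By \cref{Mark-toda} the latter is a generator $\bar{\alpha}_{m+n}$ up to $p$-multiples. The indeterminacy is $\bar\alpha_n\pi_*+p^{v_p(m)+1}\pi_*$. The first term vanishes by the product computation above, and the second is contained in $p\pi_*$ if $v_p(m)\ge 0$. So $p^{v_p(m)+1}\tilde{\alpha}_m$ generates the subgroup $\mathbb{Z}/p^{v_p(m+n)+1}$, and the middle group is cyclic of the claimed order. I expect the care needed to be in matching the juggling inclusion and the signs/choices of generators, but these do not affect the order.
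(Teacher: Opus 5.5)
Your strategy is the paper's: take the long exact sequence of $\sphere^{nq-1}\to\sphere^0\to C\bar{\alpha}_n$, then resolve the extensions by juggling into the bracket of \cref{Mark-toda}. However, you have missed one of the two extension problems.

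\textbf{The gap is in degree $nq+1$.} In cohomological degree $nq+1$ the bottom cell also contributes, namely $\sphere_{\K(1)}^{nq+1}(\sphere^0)=\pi_{-nq-1}\sphere_{\K(1)}\cong\mathbb{Z}/p^{v_p(n)+1}\{\bar{\alpha}_{-n}\}$. The connecting map into this degree comes from $\pi_{-nq}\sphere_{\K(1)}=0$, and the one out of it lands in $\pi_{-2}\sphere_{\K(1)}=0$. So you get
\[0\to\mathbb{Z}_p\{\bar{\alpha}_0\}\to\sphere_{\K(1)}^{nq+1}(C\bar{\alpha}_n)\to\mathbb{Z}/p^{v_p(n)+1}\{\bar{\alpha}_{-n}\}\to 0,\]
and the free class is not ``untouched''. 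Such an extension has middle group $\mathbb{Z}_p\oplus\mathbb{Z}/p^{c}$, where $c$ is the $p$-adic valuation of the extension class. Hence the claimed answer $\mathbb{Z}_p$ holds only if $p^{v_p(n)+1}$ times a lift of $\bar{\alpha}_{-n}$ is a unit multiple of $\bar{\alpha}_0$. Nothing in your argument shows this, so a split or partially split extension is not excluded.

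The fix is the same bracket argument you use in the torsion degrees, applied to the index you excluded. \cref{Mark-toda} with $m=-n\neq 0$ gives $\langle\bar{\alpha}_n,\alpha_{-n},p\rangle=\bar{\alpha}_0+p\,\pi_{-1}\sphere_{\K(1)}$, and every element of this coset generates $\pi_{-1}\sphere_{\K(1)}\cong\mathbb{Z}_p$. The indeterminacy is again $p$-divisible because $\pi_{-nq}\sphere_{\K(1)}=0$. This is exactly how the paper treats degree $nq+1$: it lists this sequence alongside the torsion ones and resolves it as the case $m=0$ of its bracket computation.

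\textbf{Two smaller slips, neither fatal.}

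First, the juggling inclusion goes the other way:
\[\langle\bar{\alpha}_n,\bar{\alpha}_m,p^{v_p(m)}\cdot p\rangle\subseteq\langle\bar{\alpha}_n,\bar{\alpha}_m p^{v_p(m)},p\rangle=\langle\bar{\alpha}_n,\alpha_m,p\rangle,\]
since moving a factor into the middle entry enlarges the bracket. This direction is in fact what you want. The left side is nonempty, so it consists of generators, and the indeterminacy computation becomes unnecessary.

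Second, your assignment of the two ends is swapped. In degree $(m+n)q+1$ the top cell contributes the subgroup $\pi_{-mq-1}\sphere_{\K(1)}\cong\mathbb{Z}/p^{v_p(m)+1}\{\bar{\alpha}_{-m}\}$. The bottom cell contributes the quotient $\pi_{-(m+n)q-1}\sphere_{\K(1)}\cong\mathbb{Z}/p^{v_p(m+n)+1}\{\bar{\alpha}_{-m-n}\}$. So the bracket you wrote governs degree $1-mq$ rather than $(m+n)q+1$. Since $m\mapsto -m-n$ permutes the index set and leaves the order $p^{v_p(m)+v_p(m+n)+2}$ unchanged, the final answer is unaffected.
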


\begin{proof}
  The cofiber sequence $\sphere_{K(1)}^{nq-1} \stackrel{\bar{\alpha}_n}{\longrightarrow}\sphere_{K(1)} \rightarrow C\bar{\alpha}_n$ induces a long exact sequence that breaks into the following short exact sequences $(m \in \mathbb{Z}\backslash \{0,-n\})$:
    \begin{gather*}
0 \rightarrow \mathbb{Z}_p \rightarrow\sphere_{\K(1)}^{nq}(C\bar{\alpha}_n) \rightarrow  0;\\
0 \rightarrow \sphere_{\K(1)}^{0}(C\bar{\alpha}_n) \rightarrow \mathbb{Z}_p \doublearrow{} \mathbb{Z}/p^{v_p(n)+1} \stackrel{0}{\longrightarrow} \sphere_{\K(1)}^{1}(C\bar{\alpha}_n) \rightarrow \mathbb{Z}_p \rightarrow 0;\\
0 \rightarrow  \mathbb{Z}_p\{\bar{\alpha}_0\}\rightarrow \sphere_{\K(1)}^{nq+1}(C\bar{\alpha}_n) \rightarrow \mathbb{Z}/p^{v_p(n)+1}\{\bar{\alpha}_{-n}\} \rightarrow 0;\\
0 \rightarrow \mathbb{Z}/p^{v_p(m)+1}\{\bar{\alpha}_{-m}\} \rightarrow\sphere_{\K(1)}^{(m+n)q+1}(C\bar{\alpha}_n) \rightarrow \mathbb{Z}/p^{v_p(m+n)+1}\{\bar{\alpha}_{-m-n}\} \rightarrow 0.
\end{gather*}
We need to determine the extension class of the last two short exact sequences. This is equivalent to asking what $p \bar{\alpha}_{-m-n} \in \sphere_{\K(1)}^*(C\bar{\alpha}_n)$ is. Unwinding the definition of the Toda bracket (or by comparing with the proof of \cite[Proposition 4.3]{hou2025c3equivariantstablestems}), this is equivalent to asking for $\langle \bar{\alpha}_n, \alpha_{-m-n}, p \rangle$. From \cref{Mark-toda}, we see immediately that this is $\bar{\alpha}_{-m}$, and this finishes the proof.
\end{proof}

For ease of reference, we recall the correspondence between $\operatorname{Ext}^1$ group and isomorphism classes of short exact sequences:

\begin{remark}
    Let $a, b$ be positive integers, and consider an extension of the form
    \begin{equation*}
        0 \rightarrow \mathbb{Z}/p^a \xrightarrow[]{i} E \rightarrow \mathbb{Z}/p^b \rightarrow 0.
    \end{equation*}
    We let $x=i(1)$ and $y$ be a lift of $1$ in $\mathbb{Z}/p^b$. Then there exists $\gamma$ such that $\gamma x=p^b y$. Changing the lift $y$ changes $\gamma$ by an element of $p^b(\mathbb{Z}/p^a)$; Therefore $[\gamma]$ is well-defined as a class in the image of $i$ modulo $p^b\mathbb{Z}/p^a$. This determines the isomorphism class of the short exact sequence as
    \begin{equation*}
        [\gamma] \in \frac{\mathbb{Z}/p^a}{p^b \mathbb{Z}/p^a} \cong \mathbb{Z}/p^{\min\{a,b\}} \cong \operatorname{Ext}^1(\mathbb{Z}/p^b, \mathbb{Z}/p^a).
    \end{equation*}
    We write $\gamma= p^c \theta$ for some unit $\theta$ and $0 \leq c \leq \min\{a,b\}$. When $c=\min\{a,b\}$, the above short exact sequence splits. If $c < \min\{a,b\}$ then we have the following non-canonical isomorphism 
    \begin{equation*}
        E \cong \{x,y| p^ax=0, p^by=p^c \theta x\} \cong  \mathbb{Z}/p^c \{ \theta x- p^{b-c}y \}\oplus \mathbb{Z}/p^{a+b-c}\{y\}.
    \end{equation*}
    Notice that two different classes in $\operatorname{Ext}^1(\mathbb{Z}/p^b, \mathbb{Z}/p^a)$ can induce isomorphic middle groups when they have the same $p$-adic valuation.
\end{remark}

\begin{prop} \label{finite2ncoskeleton}
    Let $n$ be a nonzero integer, and write $2n= p^ls$ where $(p,s)=1$. Then we have
    \begin{equation*}
        \sphere_{\K(1)}^m\left(P_{2n}^{2n+(l+1)q}\right) \cong
          \left\{
        \begin{aligned}
           & \mathbb{Z}/p^{\min\{l+1,v_p(k)+1\}},   \quad \text{if } m=2k, 2k+1 \text{ and } 2k \not\equiv  2n \mod{q};\\
        &\mathbb{Z}_p,   \quad \text{if } m=2n;\\
           & \mathbb{Z}_p \oplus \mathbb{Z}/p^l,   \quad\text{if } m=2n+1;\\
              & \mathbb{Z}/p^{\min\{l+1,v_p(rq+2n)+1\}},  \quad\text{if } m=rq+2n, r \neq0; \\
               &\mathbb{Z}/p^{l} \oplus \mathbb{Z}/p^{v_p(rq)+2},   \quad\text{if } m=rq+2n+1, r \neq 0 \text{ and } l \leq v_p(rq);\\
           & \mathbb{Z}/p^{\min\{l,v_p(rq+2n)\}+1} \oplus \mathbb{Z}/p^{v_p(rq)+1},  \quad\text{if } m=rq+2n+1, r \neq 0 \text{ and } l > v_p(rq);\\
          &0,  \quad\text{otherwise.}\\
        \end{aligned}
        \right.
    \end{equation*}
\end{prop}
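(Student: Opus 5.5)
The plan is to split off everything the bottom cell does not touch, and then solve the remaining extension problems by comparing with the three-cell model of \cref{attachcell}.

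\textbf{Splitting.} Let $i$ be the index with $1\le i\le p-1$ and $i\equiv n \pmod{p-1}$. Using the $p$-local decomposition (\ref{Pndecom}), made compatible with skeleta by cellular idempotents and James periodicity as in \cref{decomposeatnegative}, we write
\[
P_{2n}^{2n+(l+1)q}\simeq (X_i)_{2n}^{2n+(l+1)q}\vee\bigvee_{j\neq i}(X_j)_{2n}^{2n+(l+1)q}.
\]
The bottom cell $\mathbb{S}^{2n}$ lies only in the $X_i$-summand. For $j\neq i$ we have $(X_j)_{2n}^{2n+(l+1)q}=(X_j)_{2n+1}^{2n+(l+1)q}$. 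Hence the proposition (and \cref{Xinoextension}) computing $\sphere_{\K(1)}^*\bigl((X_j)^{\cdots}_{\cdots}\bigr)$ gives $\mathbb{Z}/p^{\min\{l+1,v_p(k)+1\}}$ in degrees $2k,2k+1$ with $k\not\equiv n \pmod{p-1}$. This is the first line of the statement, and only the $X_i$-summand remains.

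\textbf{The $X_i$-summand.} Here we use the cofiber sequence $\mathbb{S}^{2n}\to (X_i)_{2n}^{2n+(l+1)q}\to (X_i)_{2n+1}^{2n+(l+1)q}$. The cohomology of the right-hand term is known from the same proposition, and by \cref{attachcell} its top cell is $\mathbb{S}^{2n+(l+1)q}$. Its nonzero groups are in degrees $2a,2a+1$ with $2a=2n+rq$, namely $\mathbb{Z}/p^{\min\{l+1,v_p(rq+2n)+1\}}$. The long exact sequence breaks up exactly as in the three displayed sequences at the start of \cref{sec: evencom}.
\begin{itemize}
\item In degrees $rq+2n$ with $r\neq 0$ the group $\sphere_{\K(1)}^{rq+2n}(\mathbb{S}^{2n})$ vanishes, so $\sphere_{\K(1)}^{rq+2n}$ of the summand equals $\mathbb{Z}/p^{\min\{l+1,v_p(rq+2n)+1\}}$.
\item In degrees $2n$ and $2n+1$ we need the connecting map $\mathbb{Z}_p=\sphere^{2n}_{\K(1)}(\mathbb{S}^{2n})\to \sphere_{\K(1)}^{2n+1}\bigl((X_i)_{2n+1}^{2n+(l+1)q}\bigr)\cong\mathbb{Z}/p^{l+1}$. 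Through the diagram of \cref{attachcell}, this map factors through the top-cell projection to $\mathbb{S}^{2n+(l+1)q}$ and is precomposition with $\bar\alpha_{l+1}$.
\end{itemize}
Comparing with \cref{calphal}, where $\mathbb{Z}_p\to\mathbb{Z}/p^{v_p(l+1)+1}$ is the surjection, and with the fact that the top cell map $\sphere^{2n+1}_{\K(1)}(\mathbb{S}^{2n+(l+1)q})\to\mathbb{Z}/p^{l+1}$ has image the order-$p$ socle, we conclude that the connecting map has image of order $p$. Its kernel is $\mathbb{Z}_p$, which gives the $\mathbb{Z}_p$ in degree $2n$, and its cokernel is $\mathbb{Z}/p^l$. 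The remaining short exact sequence $0\to\mathbb{Z}/p^l\to E\to\mathbb{Z}_p\to0$ splits, which gives $\mathbb{Z}_p\oplus\mathbb{Z}/p^l$ in degree $2n+1$.

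\textbf{Degrees $rq+2n+1$ with $r\ne0$.} Here we have a short exact sequence
\[
0\to\mathbb{Z}/p^{\min\{l+1,v_p(rq+2n)+1\}}\to E\to\mathbb{Z}/p^{v_p(r)+1}\to0,
\]
where the quotient is generated by $\bar\alpha_{-r}$ on the bottom cell. To identify the extension class $[\gamma]$, we map the whole situation to the bottom row $\mathbb{S}^{2n}\to\Sigma^{2n}C\bar\alpha_{l+1}\to\mathbb{S}^{2n+(l+1)q}$ of \cref{attachcell}. This gives a morphism of short exact sequences. By \cref{calphal} and \cref{Mark-toda}, $p\bar\alpha_{-r}$ in $\Sigma^{2n}C\bar\alpha_{l+1}$ is detected by $\langle\bar\alpha_{l+1},\alpha_{-r},p\rangle$. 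This is a generator $\bar\alpha_{l+1-r}$ of the top-cell group, up to the indeterminacy $p\cdot\pi_*$.

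Pushing this class forward along the top-cell inclusion $\sphere_{\K(1)}^{*}(\mathbb{S}^{2n+(l+1)q})\to \sphere^*_{\K(1)}\bigl((X_i)_{2n+1}^{2n+(l+1)q}\bigr)$, which is identified by the same proposition's long exact sequence with the skeleton, we obtain $\gamma=p^c\theta$. Here $c$ is the index of the image of that inclusion, so that $c=l$ when the target is $\mathbb{Z}/p^{l+1}$, that is, when $v_p(rq+2n)\ge l$ (which happens exactly when $v_p(rq)\ge l$). Then the Remark with $a=\min\{l+1,v_p(rq+2n)+1\}$ and $b=v_p(rq)+1$ gives the two cases:
\begin{itemize}
\item if $l\le v_p(rq)$, then $c=l<\min\{a,b\}$ and the extension is nonsplit, so $E\cong\mathbb{Z}/p^l\oplus\mathbb{Z}/p^{a+b-l}$ with $a+b-l=v_p(rq)+2$;
\item if $l>v_p(rq)$, then $v_p(rq+2n)=v_p(rq)<l$ and the pushed-forward class is divisible by $p^b$, so the sequence splits.
\end{itemize}

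\textbf{Main obstacle.} The delicate step is the last one: computing precisely the image of the Toda bracket class under the top-cell inclusion into the skeleton $(X_i)_{2n+1}^{2n+(l+1)q}$, including its $p$-adic valuation, and making sure the indeterminacy does not change $c$. I would handle this by writing the inclusion explicitly as the dual of multiplication by $p^{\,\cdot}$ in $\KU_p$-cohomology, with Adams eigenvalue bookkeeping as in the proof of that proposition.
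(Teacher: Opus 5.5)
\textbf{Your degree-$2n$ step fails, and so does that line of the statement.} Write $N=2n+(l+1)q$. You say the long exact sequence for $\sphere^{2n}\to(X_i)_{2n}^{N}\to(X_i)_{2n+1}^{N}$ breaks up as at the start of \cref{sec: evencom}. Those sequences rely on $\sphere_{\K(1)}^{2n}(P_{2n+1})=0$, which holds only for the infinite tail. For the finite piece, your own description of $\sphere_{\K(1)}^{*}\bigl((X_i)_{2n+1}^{N}\bigr)$ at $r=0$ gives $\sphere_{\K(1)}^{2n}\bigl((X_i)_{2n+1}^{N}\bigr)\cong\mathbb{Z}/p^{l+1}$, since $v_p(n)=l$. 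Moreover $\sphere_{\K(1)}^{2n-1}(\sphere^{2n})=\pi_1\sphere_{\K(1)}=0$, so the sequence actually reads
\[
0\to\mathbb{Z}/p^{l+1}\to\sphere_{\K(1)}^{2n}\bigl((X_i)_{2n}^{N}\bigr)\to\ker(\star)\to 0 .
\]
Since $\ker(\star)\cong\mathbb{Z}_p$ is free, the degree-$2n$ group is $\mathbb{Z}_p\oplus\mathbb{Z}/p^{l+1}$, not $\mathbb{Z}_p$.

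Two quick checks confirm this. For $l=0$ we have $(X_i)_{2n+1}^{2n+q}\simeq\Sigma^{2n+q-1}\sphere/p$, whose $\sphere_{\K(1)}^{2n}$ is $\pi_{q-1}(\sphere_{\K(1)})[p]\cong\mathbb{Z}/p$, and this group injects. Alternatively, $\psi^g-1$ vanishes on the torsion subgroup $\mathbb{Z}/p^{l+1}$ of $\KU_p^{2n}\bigl((X_i)_{2n}^{N}\bigr)\cong\mathbb{Z}_p\oplus\mathbb{Z}/p^{l+1}$, so that torsion lies in the kernel.

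So the $m=2n$ line cannot be proved as stated. The paper's own proof makes the same omission at the left end of its sequence (\ref{second}). The slip does not propagate: \cref{P2ncalc} only uses the map $(\star)$ and the degree-$(2n+1)$ group, and for the infinite tail $\sphere_{\K(1)}^{2n}(P_{2n+1})=0$ really does hold.

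\textbf{Apart from this, your argument is the paper's.} You split off the summands $(X_j)$ with $j\neq i$; the paper does this implicitly via \cref{Xinoextension} and the vanishing of $(\ast)$. You compare with $\Sigma^{2n}C\ba_{l+1}$ through \cref{attachcell}, take the extension class there from \cref{calphal} and \cref{Mark-toda}, and push it forward along $\rho^*$.

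The step you flag as the main obstacle needs no Adams-eigenvalue bookkeeping. The map $\rho$ factors through the top Moore spectrum $P_{N-1}^{N}\simeq\Sigma^{N-1}\sphere/p$, so in degree $rq+2n+1$ the map $\rho^*$ factors through a group $\mathbb{Z}/p$, namely the reduction mod $p$ of $\pi_{(l+1-r)q-1}\sphere_{\K(1)}$. Hence the indeterminacy $p\,\pi_*$ of the Toda bracket dies, and $c$ is well defined. Now compare the orders of $\sphere_{\K(1)}^{rq+2n+1}$ on $(X_i)_{2n+1}^{N}$ and on $(X_i)_{2n+1}^{N-2}$; the first surjects onto the second. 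This shows $\rho^*$ is injective onto the order-$p$ subgroup when $v_p(rq+2n)\ge l$, and zero otherwise. With that, your conclusions in degrees $2n+1$ and $rq+2n+1$ are correct.
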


\begin{proof}
  Consider the cofiber sequence 
  \begin{equation} \label{coffor3.7}
      \sphere^{2n} \rightarrow P_{2n}^{2n+(l+1)q} \rightarrow P_{2n+1}^{2n+(l+1)q}.
  \end{equation} By \cref{Xinoextension}, the calculation is reduced to determine the extension type of the following exact sequences
\begin{gather}
0 \rightarrow \mathbb{Z}/p^{\min\{l+1,v_p(rq+2n)+1\}} \rightarrow\sphere_{\K(1)}^{rq+2n+1}(P_{2n}^{2n+(l+1)q}) \rightarrow  \mathbb{Z}/p^{v_p(rq)+1}\stackrel{(\ast)}{\longrightarrow} \mathbb{Z}/p^{\min\{l+1,v_p(rq+2n)+1\}} , \quad r \neq 0; \label{first} \\
0 \rightarrow \sphere_{\K(1)}^{2n}(P_{2n}^{2n+(l+1)q}) \rightarrow \mathbb{Z}_p \stackrel{(\star)}{\longrightarrow} \mathbb{Z}/p^{l+1} \rightarrow \sphere_{\K(1)}^{2n+1}(P_{2n}^{2n+(l+1)q}) \rightarrow \mathbb{Z}_p \rightarrow 0. \label{second}
\end{gather}
 Notice that the map $(\ast)$ is the zero map because it is induced by $P_{2n+1}^{2n+(l+1)q} \rightarrow \Sigma \sphere^{2n}$, which factors through $(X_i)_{2n+1}^{2n+(l+1)q}$ for $i \equiv n \pmod{p-1}$ and $\sphere_{\K(1)}^{rq+2n+2}((X_i)_{2n+1}^{2n+(l+1)q})=0$.
 
From \cref{attachcell}, we know the cofiber sequence (\ref{coffor3.7}) maps to the cofiber sequence $\sphere^{2n} \rightarrow \Sigma^{2n}C\bar{\alpha}_{l+1} \rightarrow \sphere^{2n+(l+1)q}$. At the level of short exact sequences, it implies that the top cell projection
$\rho \colon P_{2n+1}^{2n+(l+1)q} \rightarrow \sphere^{2n+(l+1)q} $ 
determines the extension type of $(\ref{first})$ and $(\ref{second})$.

Because the map $\rho$ factors through
$P^{2n+(l+1)q}_{2n+(l+1)q-1} \cong \Sigma^{2n+(l+1)q-1} \sphere/p$, its induced map on $\sphere_{\K(1)}$-cohomology groups at cohomological degree $rq+2n+1$ has two cases: 
\begin{gather*}
   \text{If } v_p(rq+2n) < l, \text{ then } \rho^* \colon \mathbb{Z}/p^{v_p(r-l-1)+1}  \twoheadrightarrow  \mathbb{Z}/p  \stackrel{0}{\rightarrow} \mathbb{Z}/p^{v_p(rq+2n)+1}, \quad 1 \mapsto 0; \\
      \text{If } v_p(rq+2n) \geq l, \text{ then } \rho^* \colon \mathbb{Z}/p^{v_p(r-l-1)+1}  \twoheadrightarrow  \mathbb{Z}/p  \hookrightarrow \mathbb{Z}/p^{l+1},\quad  1 \mapsto p^l;
\end{gather*}
At cohomological degree $2n+1$, we have $v_p(2n)+1=l+1$ which shows the top cell projection maps $1$ to $\alpha_{l+1}$. From the commutative diagram linking the two cofiber sequences, we see the map $(\star)$ in the exact sequence (\ref{second}) has image equal to the $p$-torsion subgroup. Hence, we learn that $ \sphere_{\K(1)}^{2n}(P_{2n}^{2n+(l+1)q}) \cong \mathbb{Z}_p$ and  $ \sphere_{\K(1)}^{2n+1}(P_{2n}^{2n+(l+1)q}) \cong \mathbb{Z}_p \oplus \mathbb{Z}/p^l$.

It remains to determine the extension class of (\ref{first}). If $l>v_p(rq)$, then we have $v_p(rq+2n)=\min\{v_p(rq),l\}=v_p(rq) <l$. Hence, the top cell projection is the zero map and the short exact sequence (\ref{first}) must split. If $l \leq v_p(rq)$, then $v_p(rq+2n) \geq \min\{l, v_p(rq)\}=l$. In this situation, the top cell projection map induces the following map of $\operatorname{Ext}^1$-groups
\begin{equation*}
    \mathbb{Z}/p^{\min\{v_p(r-l-1),v_p(rq)\}+1} \rightarrow \mathbb{Z}/p^{l+1}, \quad 1 \mapsto [p^l].
\end{equation*}
By \cref{calphal}, the extension for $C\bar{\alpha}_{l+1}$ represents the unit class in $\operatorname{Ext}^1$, which implies the extension class of (\ref{first}) is $[p^l]$, and the corresponding short exact sequence gives us 
$\sphere_{K(1)}^{rq+2n+1}(P_{2n}^{2n+(l+1)q})\cong \mathbb{Z}/p^{l} \oplus \mathbb{Z}/p^{v_p(rq)+2}.$
\end{proof}
 
Finally, we are able to calculate $\sphere_{\K(1)}^*(P_{2n})$ as follows:
\begin{prop} \label{P2ncalc}
    Let $n$ be a nonzero integer and write $2n= p^ls$ where $(p,s)=1$. Then we have
    \begin{equation*}
        \sphere_{\K(1)}^m\left(P_{2n}\right) \cong
          \left\{
        \begin{aligned}
           & \mathbb{Z}/p^{v_p(a)+1},  \quad \text{if } m= 2a+1 \text{ with } a\neq  0 \text{ and } a \not\equiv  n \mod{p-1};\\
           & \mathbb{Z}_p,   \quad \text{if } m=0, 2n;\\
           & \mathbb{Z}_p,   \quad \text{if } m=1 \text{ and } n \not\equiv 0 \text{ mod }p-1;\\
           &\mathbb{Z}_p \oplus \mathbb{Z}/p^{l},  \quad \text{if } m=1 \text{ and } n \equiv 0 \text{ mod }p-1, \text{ or }  m=2n+1;\\
           & \mathbb{Z}/p^{v_p(kq+2n)+1} \oplus \mathbb{Z}/p^{v_p(kq)+1},  \quad \text{if } m=kq+2n+1 \text{ with } k \neq 0, -2n/q \text{ and } l > v_p(kq);\\
           &\mathbb{Z}/p^l \oplus \mathbb{Z}/p^{v_p(kq)+v_p(kq+2n)+2-v_p(2n)},  \quad \text{if } m=kq+2n+1 \text{ with }  k \neq 0, -2n/q \text{ and } l \leq v_p(kq);\\
          &0,  \quad\text{otherwise.}\\
        \end{aligned}
        \right.
    \end{equation*}
\end{prop}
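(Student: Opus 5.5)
Our plan is to run the long exact sequence of the cofiber sequence $\sphere^{2n} \to P_{2n} \to P_{2n+1}$, using \cref{computationatodd} for $\sphere_{\K(1)}^*(P_{2n+1})$ and $\pi_*\sphere_{\K(1)}$ for the bottom cell. By the three families of short exact sequences at the start of \cref{sec: evencom}, together with the vanishing of $(\ast)$, it remains only to solve extension problems. In degrees $m=2a+1$ with $a\not\equiv n \pmod{p-1}$, $\sphere_{\K(1)}^{2a+1}(\sphere^{2n})=0$ because $\pi_{2n-2a-1}\sphere_{\K(1)}=0$ unless $q \mid 2n-2a$. This gives $\mathbb{Z}/p^{v_p(a)+1}$ directly. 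Degree $0$ and degree $2n$ come from the $\mathbb{Z}_p$ in $\sphere_{\K(1)}^0(P_{2n+1})$ and from the bottom cell, once the boundary is shown to be injective or zero as appropriate. In degrees $m$ even otherwise, everything vanishes.

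For the remaining degrees $m=kq+2n+1$ (including $m=1$ when $n\equiv 0 \pmod{p-1}$, i.e. $k=-2n/q$, and $m=2n+1$, i.e. $k=0$), we compare with the finite coskeleton. The inclusion $P_{2n}^{2n+(l+1)q} \to P_{2n}$ is compatible with the bottom-cell cofiber sequences, as in the diagram of \cref{attachcell}. This gives a map of short exact sequences from
\[0\to \sphere_{\K(1)}^{m}(P_{2n+1})\to \sphere_{\K(1)}^{m}(P_{2n})\to \sphere_{\K(1)}^m(\sphere^{2n})\to 0\]
to the corresponding sequence for $P_{2n}^{2n+(l+1)q}$, which is identified in \cref{finite2ncoskeleton}. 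On the bottom-cell terms this map is the identity. On the left terms it is the restriction $\sphere_{\K(1)}^m(P_{2n+1})\cong \mathbb{Z}/p^{v_p(kq+2n)+1} \to \mathbb{Z}/p^{\min\{l+1,v_p(kq+2n)+1\}}$, which is surjective; we would check this via the $(X_i)$-decomposition and the skeletal computation behind \cref{Xinoextension}. Naturality of $\operatorname{Ext}^1$ then implies that the extension class of the infinite sequence maps to the class found in \cref{finite2ncoskeleton}: zero if $l>v_p(kq)$, and $[p^l]$ (times a unit) if $l\le v_p(kq)$. Moreover, $\operatorname{Ext}^1(\mathbb{Z}/p^{v_p(kq)+1},\mathbb{Z}/p^{v_p(kq+2n)+1})\to \operatorname{Ext}^1(\mathbb{Z}/p^{v_p(kq)+1},\mathbb{Z}/p^{\min\{\cdot\}})$ is the reduction map. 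So if $l>v_p(kq)$ the class has valuation at least $v_p(kq)+1$, which makes it split, and we obtain $\mathbb{Z}/p^{v_p(kq+2n)+1}\oplus\mathbb{Z}/p^{v_p(kq)+1}$.

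The main obstacle is the case $l\le v_p(kq)$, where reduction only pins the class down modulo $p^{l+1}$. There the class is $p^l\theta$ with $\theta$ a unit, because $v_p(kq+2n)\ge l$ and the reduced class is exactly $[p^l]$ with unit coefficient. The remark on extensions then gives $E\cong \mathbb{Z}/p^l\oplus \mathbb{Z}/p^{a+b-l}$ with $a=v_p(kq+2n)+1$ and $b=v_p(kq)+1$. This matches $\mathbb{Z}/p^l\oplus\mathbb{Z}/p^{v_p(kq)+v_p(kq+2n)+2-v_p(2n)}$, using $v_p(2n)=l$. Care is needed when $l=0$: the class is then a unit, giving a cyclic group, which is consistent since $\mathbb{Z}/p^0=0$.

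For $k=0$ and $k=-2n/q$, the left or right term is $\mathbb{Z}_p$. In degree $2n+1$, the map $\mathbb{Z}_p\to\mathbb{Z}/p^{l+1}$ from \cref{finite2ncoskeleton} (image equal to the $p$-torsion) lifts to the infinite case. Comparing, we get that $\sphere_{\K(1)}^{2n+1}(P_{2n})$ surjects onto $\mathbb{Z}_p\oplus\mathbb{Z}/p^l$ with kernel computed from the sequence, yielding $\mathbb{Z}_p\oplus\mathbb{Z}/p^l$. We also obtain $\sphere_{\K(1)}^{2n}(P_{2n})\cong \mathbb{Z}_p$. Degree $1$ with $n\equiv 0\pmod{p-1}$ is handled symmetrically: the extension of $\mathbb{Z}/p^{v_p(2n)+1}$ by $\mathbb{Z}_p$ is detected by the same top-cell map and has class $p^l$, giving $\mathbb{Z}_p\oplus\mathbb{Z}/p^l$.
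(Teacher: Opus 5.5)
Your proposal is correct and is essentially the paper's own argument. You map the bottom-cell short exact sequences for $P_{2n}$ to those for the skeleton $P_{2n}^{2n+(l+1)q}$ of \cref{finite2ncoskeleton}, observe that the induced map on $\operatorname{Ext}^1$ is reduction, and conclude that the class is split when $l>v_p(kq)$ and has valuation exactly $l$ when $l\le v_p(kq)$; degrees $1$, $2n$ and $2n+1$ are handled by the same comparison. One piece of wording in degree $2n+1$ should be tidied: the comparison map there is actually an isomorphism, as the paper notes, so the ``kernel'' you mention is zero.
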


\begin{proof}
    As discussed at the beginning of the section, we want to determine the extension type of the following exact sequences:
\begin{gather}
0 \rightarrow \mathbb{Z}_p \rightarrow \sphere_{\K(1)}^0(P_{2n}) \rightarrow  0 \rightarrow \mathbb{Z}_p \rightarrow \sphere_{\K(1)}^1(P_{2n}) \rightarrow  \sphere_{\K(1)}^1(\sphere^{2n}) \rightarrow 0; \label{first2}\\
0 \rightarrow \sphere_{\K(1)}^{2n}(P_{2n}) \rightarrow \mathbb{Z}_p \stackrel{(\ast)}{\longrightarrow} \mathbb{Z}/p^{l+1} \rightarrow \sphere_{\K(1)}^{2n+1}(P_{2n}) \rightarrow \mathbb{Z}_p \rightarrow 0;\label{second2}\\
0 \rightarrow \mathbb{Z}/p^{v_p(kq+2n)+1}\rightarrow \sphere_{\K(1)}^{kq+2n+1}(P_{2n}) \rightarrow \mathbb{Z}/p^{v_p(kq)+1} \rightarrow 0, \quad k \in \mathbb{Z} \setminus \{0,-2n/q\}.\label{third2}
\end{gather}  

For the sequence (\ref{second2}), it admits a map from the exact sequence $(\ref{second})$ in the proof of \cref{finite2ncoskeleton} where every map is an isomorphism, so we learn that $ \sphere_{\K(1)}^{2n}(P_{2n}) \cong \mathbb{Z}_p$ and  $ \sphere_{\K(1)}^{2n+1}(P_{2n}) \cong \mathbb{Z}_p \oplus \mathbb{Z}/p^l$.

For the sequence (\ref{first2}), we have $\sphere_{\K(1)}^1(\sphere^{2n}) =0$ when $ n \not\equiv 0 \text{ mod }p-1 $, so the group and the adjacent maps are determined. If $ n \equiv 0 \text{ mod }p-1 $, we have $\sphere_{\K(1)}^1(\sphere^{2n}) \cong \mathbb{Z}/p^{l+1}$, and since it maps to $0\rightarrow \mathbb{Z}/p^{l+1} \rightarrow \mathbb{Z}/p^l \oplus \mathbb{Z}/p^{l+2} \rightarrow \mathbb{Z}/p^{l+1} \rightarrow 0$ by \cref{attachcell}, this implies that $ \sphere_{\K(1)}^1( P_{2n}) \cong \mathbb{Z}_p \oplus \mathbb{Z}/p^{l}, \quad \text{if } n \equiv 0\mod{p-1}.$

For the sequence (\ref{third2}), \cref{attachcell} implies there is a map
\begin{equation*}
    \operatorname{Ext}^1(\mathbb{Z}/p^{v_p(kq)+1},\mathbb{Z}/p^{v_p(kq+2n)+1})  \rightarrow \operatorname{Ext}^1(\mathbb{Z}/p^{v_p(kq)+1},\mathbb{Z}/p^{\min\{v_p(kq+2n)+1,l+1\}}) 
\end{equation*}
induced by the natural surjection. The undetermined extension type $[\gamma]$ maps to the extension type of $\sphere_{\K(1)}^{kq+2n+1}(P_{2n}^{2n+(l+1)q})$. The map of $\operatorname{Ext}^1$ groups is the natural projection
$$\mathbb{Z}/p^{\min\{v_p(kq+2n), v_p(kq)\}+1}  \doublearrow{} \mathbb{Z}/p^{\min\{l, v_p(kq)\}+1}$$
with $[\gamma]$ mapping to $[p^l]$. 

If $l > v_p(kq) $, we have $[p^l]=0$ and $v_p(kq+2n)=v_p(kq)$, so the above map becomes $\mathbb{Z}/p^{v_p(kq)+1} \stackrel{\operatorname{id}}{\longrightarrow}\mathbb{Z}/p^{v_p(kq)+1}$. Hence we have $[\gamma]=0$.

If $l \leq v_p(kq) $, we see the extension type $[\gamma]$ is in the pre-image of $[p^{l}]$ as
$$\{p^{l} + p^{l+1}a \mid 0 \leq a < p^{\min\{v_p(kq+2n), v_p(kq)\}-l} \}.$$

Although the extension type is not uniquely determined, the different extension types in the pre-image give isomorphic middle groups as
    $\mathbb{Z}/p^{l} \oplus  \mathbb{Z}/p^{v_p(kq)+2+v_p(kq+2n)-l}$, and this completes the computation.
\end{proof}

The full description of the additive structure of $\pi_{i,j} b(\sphere_{\K(1)})$ is presented as follows:

\begin{theorem} \label{Main result}
The  $\ROS(C_p)$-graded homotopy groups of $b(\KS)$, as abelian groups, are as follows: 
\begin{center}
\begin{tabular}{ |c|c|c|c|} 
\hline
 $\pi_{i,j}b(\sphere_{\K(1)}), \quad j=2k+1$ & $i=2k, 2k+1$ & $2m, m\neq k$ & otherwise \\
\hline
 & $\mathbb{Z}_p$ & $\mathbb{Z}/p^{v_p(k-m)+1}$ & $0$\\ 
\hline
\end{tabular}

\begin{tabular}{ |c|c|c|c|} 
\hline
 $\pi_{i,j}b(\sphere_{\K(1)}), \quad j=0$ & $i=-1,0$ & $i=2m-1$ & otherwise \\
\hline
&$\mathbb{Z}_p \oplus \mathbb{Z}_p$ && 0\\
\hline
$m \neq 0$ and $m \not\equiv 0$ mod $p-1$ & & $\mathbb{Z}/p^{v_p(m)+1}$ &\\
\hline
$m\neq 0$ and $m\equiv 0$ mod $p-1$ && $ \mathbb{Z}/p^{v_p(m)+1} \oplus \mathbb{Z}/p^{v_p(m)+1}$&\\
\hline
\end{tabular}

\begin{tabular}{ | m{10em} | m{2cm}| m{6cm} | m{2cm}| } 
\hline
 $\pi_{i,j}b(\sphere_{\K(1)}), \quad j=2k$ with $ k\neq 0$ & $i=0,2k$ & $i=2m-1$ & otherwise \\
\hline
 & $\mathbb{Z}_p$ & &0\\ 
 \hline
$m \neq 0, k$ and $m \not\equiv 0$ mod $p-1$ & & $\mathbb{Z}/p^{v_p(k-m)+1}$ &\\
\hline
$m=0$ &&$\mathbb{Z}_p \oplus \mathbb{Z}/p^{v_p(k)}$&\\
\hline
$m=k$ with $k \not\equiv 0$ mod $p-1$&& $\mathbb{Z}_p$&\\
\hline
$m=k$ with $k \equiv 0$ mod $p-1$&& $\mathbb{Z}_p \oplus \mathbb{Z}/p^{v_p(k)}$&\\
\hline
$m \neq 0,k$ and $m\equiv 0$ mod $p-1$ with  $v_p(k) > v_p(m)$ && $ \mathbb{Z}/p^{v_p(k-m)+1} \oplus \mathbb{Z}/p^{v_p(m)+1}$&\\
\hline
$m \neq 0,k$ and $m\equiv 0$ mod $p-1$ with  $v_p(k) \leq v_p(m)$ && $ \mathbb{Z}/p^{v_p(k)} \oplus \mathbb{Z}/p^{v_p(m)+v_p(k-m)+2-v_p(k)}$&\\
\hline
\end{tabular}
\end{center}
\end{theorem}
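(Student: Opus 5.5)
The plan is to assemble the theorem from the cohomological computations already in hand, using \cref{spoketocohomo}:
\[
\pi_{i,j}b(\KS)\cong \KS^{\,j-i}(P_j).
\]
The rest is to reindex each earlier answer into the $(i,j)$ grading, splitting into three cases: $j$ odd, $j=0$, and $j=2k$ with $k\neq 0$.

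\textbf{Odd weights.} For $j=2k+1$ there is nothing new to prove. The table entry is exactly \cref{computationatodd}, which was itself obtained from \cref{decomposeatpositive} and \cref{decomposeatnegative}.

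\textbf{Weight $j=0$.} Here $P_0=\Sigma^\infty_+BC_p$, so $\pi_{i,0}b(\KS)\cong \KS^{-i}(\Sigma^\infty_+BC_p)$.
\begin{itemize}
\item By \cref{fixpoint}, this group is $\pi_i$ of $\KS\oplus\bigvee_{r=1}^{p-1}\KS[\omega^r]$.
\item I would compute $\pi_*\KS[a]$ from the fiber sequence $\KS[a]\to \KU_p\otimes\KS[a]\xrightarrow{\psi^g-1}\KU_p\otimes\KS[a]$. On $\pi_{2t}$ the map $\psi^g-1$ acts by $a g^{t}-1$, up to the sign convention for the twist.
\item For $a=\omega^r$, the element $\omega^r g^t-1$ is a unit unless $t\equiv -r\pmod{p-1}$. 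In that case its valuation is $v_p(t)+1$, giving $\pi_{2t-1}\cong\mathbb Z/p^{v_p(t)+1}$; when $t=0$ and $r\equiv0$ one instead gets $\mathbb Z_p$ in degrees $-1$ and $0$.
\item Since $\omega^{p-1}=1$, the summand $\KS[\omega^{p-1}]$ is a second copy of $\KS$. So for $m\equiv0\pmod{p-1}$ two summands contribute, giving $\mathbb Z/p^{v_p(m)+1}\oplus\mathbb Z/p^{v_p(m)+1}$ in degree $2m-1$ and $\mathbb Z_p\oplus\mathbb Z_p$ in degrees $-1,0$.
\item For $m\not\equiv0$ exactly one $r$ contributes, giving $\mathbb Z/p^{v_p(m)+1}$.
\end{itemize}

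\textbf{Even weights $j=2k$, $k\neq0$.} I apply \cref{P2ncalc} with $n=k$, so $l=v_p(2k)=v_p(k)$, and cohomological degree $2k-i$.
\begin{itemize}
\item The degrees $i=0$ and $i=2k$ correspond to cohomological degrees $2k$ and $0$, both $\mathbb Z_p$.
\item For $i=2m-1$ the degree is $2(k-m)+1$. Setting $a=k-m$, the condition $a\not\equiv n\pmod{p-1}$ becomes $m\not\equiv 0$, which gives $\mathbb Z/p^{v_p(k-m)+1}$.
\item $m=k$ corresponds to degree $1$, and $m=0$ corresponds to degree $2k+1$.
\item For $m\equiv 0\pmod{p-1}$ with $m\neq 0,k$, I write the degree as $k'q+2k+1$ with $k'q=-2m$. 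Then $v_p(k'q)=v_p(m)$ and $v_p(k'q+2n)=v_p(k-m)$. The two subcases $l>v_p(m)$ and $l\le v_p(m)$ of \cref{P2ncalc} then produce the last two rows of the table.
\item All other degrees vanish by \cref{P2ncalc}.
\end{itemize}

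The only real obstacle is bookkeeping: matching the sign conventions for the twists $\KS[\omega^r]$ in the $j=0$ case, and correctly translating the case conditions of \cref{P2ncalc} into conditions on $m$. All genuine extension problems were already settled in \cref{finite2ncoskeleton} and \cref{P2ncalc}.
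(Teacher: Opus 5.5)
Your proposal is correct and is essentially the paper's argument. The theorem is stated there without a separate proof, as the assembly via \cref{spoketocohomo} of \cref{computationatodd} for odd $j$, the splitting of \cref{fixpoint} for $P_0=\Sigma^\infty_+BC_p$, and \cref{P2ncalc} with $n=k$, $l=v_p(k)$; your reindexing of the case conditions (e.g.\ $k'q=-2m$, $v_p(k'q)=v_p(m)$, $v_p(k'q+2n)=v_p(k-m)$) and your eigenvalue computation of $\pi_*\KS[\omega^r]$ both check out.
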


\cref{fig:p3-spoke-chart} displays the calculation at $p=3$:

\begin{figure}[H] 
  \centering
  \newcommand{\Z}{\mathbb{Z}}
\newcommand{\Zthree}{\Z_3}

% Display window
\def\xmin{-20}
\def\xmax{20}
\def\ymin{-10}
\def\ymax{10}

\makeatletter
% 3-adic valuation of a nonzero integer; v_3(0) is set to 99 as a dummy.
\newcount\vt@n
\newcount\vt@v
\newcommand{\vthree}[2]{%
  \pgfmathtruncatemacro{\vtarg}{#1}%
  \vt@n=\vtarg\relax
  \ifnum\vt@n<0 \multiply\vt@n by -1 \fi
  \vt@v=0
  \ifnum\vt@n=0
    \vt@v=99
  \else
    \loop\ifnum\vt@n>0
      \pgfmathtruncatemacro{\vtr}{mod(\the\vt@n,3)}%
      \ifnum\vtr=0
        \divide\vt@n by 3
        \advance\vt@v by 1
      \else
        \vt@n=0
      \fi
    \repeat
  \fi
  \xdef#2{\the\vt@v}%
}
\makeatother

\tikzset{
  zthree/.style={circle,fill=black,minimum size=4.8pt,inner sep=0pt},
  zthreethree/.style={circle,draw=black,thick,fill=black!12,minimum size=8.2pt,inner sep=0pt},
  tor/.style={circle,draw=blue!70!black,fill=blue!8,minimum size=8.2pt,inner sep=0pt},
  tortwo/.style={rectangle,draw=red!75!black,fill=red!7,rounded corners=.7pt,minimum height=5.8pt,inner xsep=1.2pt,inner ysep=0pt,font=\fontsize{5}{5}\selectfont},
  tortwoTriv/.style={rectangle,draw=red!90!black,fill=red!18,rounded corners=.7pt,minimum height=5.8pt,inner xsep=1.2pt,inner ysep=0pt,font=\fontsize{5}{5}\selectfont},
  tortwoNontriv/.style={rectangle,draw=purple!80!black,fill=purple!10,rounded corners=.7pt,minimum height=5.8pt,inner xsep=1.2pt,inner ysep=0pt,font=\fontsize{5}{5}\selectfont},
  zthreetor/.style={diamond,draw=green!50!black,fill=green!10,minimum size=9pt,inner sep=0pt},
  glabel/.style={font=\scriptsize,inner sep=1pt},
  axislabel/.style={font=\small},
  ticklabel/.style={font=\scriptsize},
  legendtext/.style={font=\footnotesize}
}

\newcommand{\putZthree}[2]{\node[zthree] at (#1,#2) {};}
\newcommand{\putZthreeTwo}[2]{\node[zthreethree] at (#1,#2) {}; \node[glabel] at (#1,#2) {$2$};}
\newcommand{\putTor}[3]{\node[tor] at (#1,#2) {}; \node[glabel] at (#1,#2) {$#3$};}
\newcommand{\putTorTwo}[4]{\node[tortwo] at (#1,#2) {$#3,#4$};}
\newcommand{\putTorTwoTriv}[4]{\node[tortwoTriv] at (#1,#2) {$#3,#4$};}
\newcommand{\putTorTwoNontriv}[4]{\node[tortwoNontriv] at (#1,#2) {$#3,#4$};}
\newcommand{\putZthreeTor}[3]{\node[zthreetor] at (#1,#2) {}; \node[glabel] at (#1,#2) {$#3$};}

\begin{tikzpicture}[x=.33cm,y=.43cm]
  % grid and axes
  \draw[step=1,gray!16,very thin] (\xmin,\ymin) grid (\xmax,\ymax);
  \draw[->,thick] (\xmin-.8,0) -- (\xmax+1.0,0) node[right,axislabel] {$i$};
  \draw[->,thick] (0,\ymin-.7) -- (0,\ymax+1.0) node[above,axislabel] {$j$};
  \foreach \x in {-20,-15,-10,-5,0,5,10,15,20} {
    \draw[black!50] (\x,0.13) -- (\x,-0.13);
    \node[below,ticklabel] at (\x,0) {$\x$};
  }
  \foreach \y in {-10,-8,-6,-4,-2,0,2,4,6,8,10} {
    \draw[black!50] (0.13,\y) -- (-0.13,\y);
    \node[left,ticklabel] at (0,\y) {$\y$};
  }

  % Main plotting loop, specialized to p=3, so p-1=2 and q=4.
  % This version follows the updated Theorem 3.9 in a_newer_version (4).pdf.
  \foreach \J in {-10,-9,...,10}{
    \foreach \I in {-20,-19,...,20}{
      \ifodd\J
        % Odd row: j=2k+1.
        % Updated rule: i=2k and i=2k+1 give Z_3;
        % all other even i=2m with m != k give Z/3^{v_3(k-m)+1}.
        \pgfmathtruncatemacro{\K}{(\J-1)/2}
        \pgfmathtruncatemacro{\TwoK}{2*\K}
        \pgfmathtruncatemacro{\TwoKPlusOne}{2*\K+1}
        \ifnum\I=\TwoK
          \putZthree{\I}{\J}%
        \else\ifnum\I=\TwoKPlusOne
          \putZthree{\I}{\J}%
        \else
          \ifodd\I
          \else
            \pgfmathtruncatemacro{\M}{\I/2}
            \ifnum\M=\K
            \else
              \vthree{\K-\M}{\A}
              \pgfmathtruncatemacro{\E}{\A+1}
              \putTor{\I}{\J}{\E}% represents Z/3^E
            \fi
          \fi
        \fi\fi
      \else
        \ifnum\J=0
          % Middle row: j=0.
          % Updated rule: i=-1 and i=0 give Z_3 \oplus Z_3.
          \ifnum\I=-1
            \putZthreeTwo{\I}{\J}%
          \else\ifnum\I=0
            \putZthreeTwo{\I}{\J}%
          \else
            \ifodd\I
              \pgfmathtruncatemacro{\M}{(\I+1)/2}
              \ifnum\M=0\else
                \vthree{\M}{\A}
                \pgfmathtruncatemacro{\E}{\A+1}
                \ifodd\M
                  \putTor{\I}{\J}{\E}%
                \else
                  % Middle row j=0: this direct-sum case is a split/trivial extension.
                  \putTorTwoTriv{\I}{\J}{\E}{\E}%
                \fi
              \fi
            \fi
          \fi\fi
        \else
          % Even row: j=2k, k not zero.
          \pgfmathtruncatemacro{\K}{\J/2}
          \pgfmathtruncatemacro{\TwoK}{2*\K}
          \ifnum\I=0
            \putZthree{\I}{\J}%
          \else\ifnum\I=\TwoK
            \putZthree{\I}{\J}%
          \else
            \ifodd\I
              \pgfmathtruncatemacro{\M}{(\I+1)/2}
              \ifnum\M=0
                \vthree{\K}{\C}
                \ifnum\C=0 \putZthree{\I}{\J}\else \putZthreeTor{\I}{\J}{\C}\fi
              \else\ifnum\M=\K
                \vthree{\K}{\C}
                \ifodd\K
                  \putZthree{\I}{\J}%
                \else
                  \ifnum\C=0 \putZthree{\I}{\J}\else \putZthreeTor{\I}{\J}{\C}\fi
                \fi
              \else
                \ifodd\M
                  \vthree{\K-\M}{\A}
                  \pgfmathtruncatemacro{\E}{\A+1}
                  \putTor{\I}{\J}{\E}%
                \else
                  \vthree{\K-\M}{\A}% A = v_3(k-m)
                  \vthree{\M}{\B}% B = v_3(m)
                  \vthree{\K}{\C}% C = v_3(k)
                  % Updated Theorem 3.9 in a_newer_version (4):
                  % split if v3(k)>v3(m), with exponents v3(k-m)+1 and v3(m)+1;
                  % nonsplit if v3(k)<=v3(m), with exponents v3(k) and
                  % v3(m)+v3(k-m)+2-v3(k).
                  \ifnum\C>\B
                    \pgfmathtruncatemacro{\Eone}{\A+1}
                    \pgfmathtruncatemacro{\Etwo}{\B+1}
                    \putTorTwoTriv{\I}{\J}{\Eone}{\Etwo}%
                  \else
                    \pgfmathtruncatemacro{\Eone}{\C}
                    \pgfmathtruncatemacro{\Etwo}{\B+\A+2-\C}
                    \putTorTwoNontriv{\I}{\J}{\Eone}{\Etwo}%
                  \fi
                \fi
              \fi\fi
            \fi
          \fi\fi
        \fi
      \fi
    }
  }

  % legend, moved above the picture with wider spacing
  \node[anchor=west,legendtext] at (-19.8,14.4) {Legend};

  % first row
  \node[zthree] at (-16,14.4) {};
    \node[anchor=west,legendtext] at (-16.0,14.4) {$\Z_3$};

  \node[zthreethree] at (-12.9,14.4) {};
    \node[glabel] at (-12.9,14.4) {$2$};
    \node[anchor=west,legendtext] at (-12.0,14.4) {$\Z_3\oplus\Z_3$};

  \node[tor] at (-6.2,14.4) {};
    \node[glabel] at (-6.2,14.4) {$a$};
    \node[anchor=west,legendtext] at (-5.4,14.4) {$\Z/3^a$};

  \node[tortwo] at (0.9,14.4) {$a,b$};
    \node[anchor=west,legendtext] at (2.4,14.4) {$\Z/3^a\oplus\Z/3^b$};

  % second row
  \node[tortwoTriv] at (-11.0,13.0) {$a,b$};
    \node[anchor=west,legendtext] at (-9.5,13.0) {split extension};

  \node[tortwoNontriv] at (-1.0,13.0) {$a,b$};
    \node[anchor=west,legendtext] at (0.5,13.0) {nonsplit extension};

  \node[zthreetor] at (10.2,13.0) {};
    \node[glabel] at (10.2,13.0) {$a$};
    \node[anchor=west,legendtext] at (11.0,13.0) {$\Z_3\oplus\Z/3^a$};
\end{tikzpicture}
  \caption{The spoke-graded homotopy groups of $b(S_{\K(1)})$ for $p=3$.}
  \label{fig:p3-spoke-chart}
\end{figure}

\section{Equivariant structure of the spoke homotopy groups}\label{equivariantstructure}
 Let $X$ be an arbitrary $C_p$-equivariant spectrum. We use $C_{\sigma}$ to denote the underlying spectrum of $\sphere^{\sigma}$ equipped with a residual $C_p$-action.  The spoke-graded groups carry three basic families of structure maps:
 \begin{itemize}
     \item The $a_\sigma$-map $\pi_{i,j} X\rightarrow \pi_{i-1,j-1}X$ induced by $\sphere \rightarrow \sphere^{\sigma}$.
     \item The restriction maps 
     $$\res: \pi_{i,2k}X \rightarrow \pi_i^e X \text{ and } \pi_{i,2k+1}X \rightarrow \pi_{i-1} \Map(C_{\sigma}, X^e) \cong \oplus_{p-1} \pi_i X, $$
   which arise naturally from the Mackey functor structure.     \item The transfer maps
     $$
     \tr: \pi_i^eX\rightarrow \pi_{i,2k} X
     \text{ and } \pi_{i-1} \Map(C_{\sigma}, X^e) \rightarrow \pi_{i,2k+1} X,
     $$
     which also arise from the Mackey functor structure.
     \end{itemize}
 The structure maps are connected to each other by the following relations:
 \begin{itemize}
     \item For $j=2k$, we have $\operatorname{Ker}(\res\colon \pi_{i,2k} X \rightarrow \pi_{i} X^e) = \operatorname{Im}(a_{\sigma} \colon \pi_{i+1, 2k+1}X \rightarrow \pi_{i,2k} X).$
     \item For $j=2k+1$, we have 
     $$\operatorname{Ker}(\res\colon \pi_{i,2k+1} X \rightarrow \pi_{i-1} \Map(C_{\sigma}, X^e)) = \operatorname{Im}( \pi_{i-1, 2k}F(\sphere^{\sigma} \wedge \sphere^{\sigma}, X^e) \rightarrow \pi_{i,2k+1} X),$$
     which contains $\operatorname{Im}(a_{\sigma}: \pi_{i+1,2k+2} X \rightarrow \pi_{i,2k+1} X)$ as a subgroup.
     \item For $j=2k$, we have $\operatorname{Im} (\tr \colon \pi_i X^e \rightarrow \pi_{i, 2k}X) = \operatorname{Ker}(a_{\sigma} \colon \pi_{i,2k} X \rightarrow \pi_{i-1, 2k-1} X).$
     \item For $j=2k+1$, we have 
     $$
     \operatorname{Im} (\tr \colon \pi_{i-1} \Map(C_{\sigma}, X^e) \rightarrow \pi_{i,2k+1}X) = \operatorname{Ker}(\pi_{i, 2k+1} X \rightarrow \pi_{i-1,2k} F(\sphere^{\sigma} \wedge \sphere^{-\sigma}, X) )
     $$
     which is a subgroup of $\operatorname{Ker}(a_{\sigma} \colon \pi_{i,2k+1}  X \rightarrow \pi_{i-1,2k} X)$.
     \item The composites of restriction and transfer satisfy the relations
     \begin{equation*}
         \res \circ \tr = \sum_{g \in C_p} g, \quad \tr \circ \res =[C_p/e] \cdot (-),
     \end{equation*}
     where $[C_p/e]$ means the action of $[C_p/e]$ on $\pi_{*,*}X$ as a $\pi_{0,0} \sphere \cong A(C_p)$-module.   
 \end{itemize}
We will describe these structure maps up to multiplication by a unit in this section.
\subsection{The $a_{\sigma}$-map} In our case, the $\mathbb{Z}[a_{\sigma}]$-module structure is described by the map $$\pi_{i,j}b(\sphere_{\K(1)})\cong \sphere_{\K(1)}^{j-i}(P_j)\rightarrow \pi_{i-1,j-1} b(\sphere_{\K(1)}) \cong \sphere_{\K(1)}^{j-i}(P_{j-1})$$
induced by the natural map $P_{j-1} \rightarrow P_j$. When $j=2k+1$, this map is part of the cofiber sequence $\sphere^{2k} \rightarrow P_{2k} \rightarrow P_{2k+1}.$ When $k=0$, this cofiber sequence splits and the $a_{\sigma}$-map is an injection. When $k \neq 0$, the maps can be read off directly from the calculation in \cref{sec: evencom} : 
\begin{center}
\begin{tabular}{ | m{8cm} | m{10cm}| } 
\hline
 $\mathbf{j=2k+1}$ \bf{with} $ \mathbf{k\neq 0}$ & \bf{The $\mathbf{a_{\sigma}}$-map} \\
\hline
$i=2k+1$  & $\mathbb{Z}_p \stackrel{\cong}{\rightarrow}\mathbb{Z}_p$   \\ 
 \hline
 $i=1$  & $0 \rightarrow \mathbb{Z}_p$   \\ 
 \hline
 $i=0$ &$\mathbb{Z}/p^{v_p(k)+1} \stackrel{(0,1)}{\rightarrow}\mathbb{Z}_p \oplus \mathbb{Z}/p^{v_p(k)}$\\
\hline
$i=2k$ with $k \not\equiv 0$ mod $p-1$& $\mathbb{Z}_p \stackrel{\cong}{\rightarrow}\mathbb{Z}_p$\\
\hline
$i=2k$ with $k \equiv 0$ mod $p-1$& $\mathbb{Z}_p \stackrel{(p,1)}{\longrightarrow}\mathbb{Z}_p \oplus \mathbb{Z}/p^{v_p(k)}$\\
\hline
$i=2m, m \neq 0, k$ and $m \not\equiv 0$ mod $p-1$  & $\mathbb{Z}/p^{v_p(k-m)+1}\stackrel{\cong}{\rightarrow}\mathbb{Z}/p^{v_p(k-m)+1}$ \\
\hline
$i=2m, m \neq 0,k$ and $m\equiv 0$ mod $p-1$ with  $v_p(k) > v_p(m)$ & $ \mathbb{Z}/p^{v_p(k-m)+1}\stackrel{(1,0)}{\longrightarrow} \mathbb{Z}/p^{v_p(k-m)+1} \oplus \mathbb{Z}/p^{v_p(m)+1}$\\
\hline
$i=2m, m \neq 0,k$ and $m\equiv 0$ mod $p-1$ with  $v_p(k) \leq v_p(m)$ & $ \mathbb{Z}/p^{v_p(k-m)+1}\xrightarrow{(1,p^{v_p(m)-v_p(k)+1})}\mathbb{Z}/p^{v_p(k)} \oplus \mathbb{Z}/p^{v_p(m)+v_p(k-m)+2-v_p(k)}$\\
\hline
\end{tabular}
\end{center}
When $j=2k$, the $a_{\sigma}$ map is restricted by the following information:
\begin{itemize}
    \item For the composite map $P_{2k-1} \to P_{2k} \rightarrow P_{2k+1}$, it induces the multiplication by $p$ map on the $\sphere_{\K(1)}$-cohomology groups at degree $rq+2k+1$ because its fiber is $\Sigma^{2k-1}\sphere/p$.
    \item The long exact sequence induced by the cofiber sequence $P_{2k-1} \rightarrow P_{2k} \rightarrow \sphere^{2k}$.
\end{itemize}
With these observations, the two exact sequences determine the entries in the following tables:
\begin{center}
\begin{tabular}{ | m{8cm} | m{10cm}| } 
\hline
 $\mathbf{j=0}$ & \bf{The $\mathbf{a_{\sigma}}$-map} \\
\hline
 $i=0,-1$ &$\mathbb{Z}_p \oplus \mathbb{Z}_p \xrightarrow{(1,p)}  \mathbb{Z}_p$\\
\hline
$i=2m-1$,  $m \not\equiv 0$ mod $p-1$  & $\mathbb{Z}/p^{v_p(m)+1}\stackrel{\cong}{\rightarrow}\mathbb{Z}/p^{v_p(m)+1}$ \\
\hline
$i=2m-1$ and $m\equiv 0$ mod $p-1$ & $  \mathbb{Z}/p^{v_p(m)+1} \oplus \mathbb{Z}/p^{v_p(m)+1} \xrightarrow{(1,p)}\mathbb{Z}/p^{v_p(m)+1}$\\
\hline
\hline
\end{tabular}
\end{center}

\begin{center}
\begin{tabular}{ | m{8cm} | m{10cm}| } 
\hline
 $\mathbf{j=2k}$ \bf{with} $ \mathbf{k\neq 0}$ & \bf{The $\mathbf{a_{\sigma}}$-map} \\
\hline
$i=2k$ with $k \not\equiv 0$ mod $p-1$& $\mathbb{Z}_p \stackrel{\cong}{\rightarrow}\mathbb{Z}_p$\\
\hline
$i=2k$ with $k \equiv 0$ mod $p-1$ & $\mathbb{Z}_p \stackrel{p}{\rightarrow}\mathbb{Z}_p$\\
\hline
 $i=0$  & $\mathbb{Z}_p \rightarrow 0$   \\ 
\hline
 $i=-1$ &$\mathbb{Z}_p \oplus \mathbb{Z}/p^{v_p(k)} \xrightarrow{(1,p)}  \mathbb{Z}/p^{v_p(k)+1}$\\
\hline
$i=2k-1$ with $k \not\equiv 0$ mod $p-1$& $\mathbb{Z}_p \stackrel{\cong}{\rightarrow}\mathbb{Z}_p$\\
\hline
$i=2k-1$ with $k \equiv 0$ mod $p-1$& $\mathbb{Z}_p \oplus \mathbb{Z}/p^{v_p(k)} \xrightarrow{(1,0)} \mathbb{Z}_p$\\
\hline
$i=2m-1, m \neq 0, k$ and $m \not\equiv 0$ mod $p-1$  & $\mathbb{Z}/p^{v_p(k-m)+1}\stackrel{\cong}{\rightarrow}\mathbb{Z}/p^{v_p(k-m)+1}$ \\
\hline
$i=2m-1, m \neq 0,k$ and $m\equiv 0$ mod $p-1$ with  $v_p(k) > v_p(m)$ & $  \mathbb{Z}/p^{v_p(k-m)+1} \oplus \mathbb{Z}/p^{v_p(m)+1} \xrightarrow{(p,1)}\mathbb{Z}/p^{v_p(k-m)+1}$\\
\hline
$i=2m-1, m \neq 0,k$ and $m\equiv 0$ mod $p-1$ with  $v_p(k) \leq v_p(m)$ & $ \mathbb{Z}/p^{v_p(k)} \oplus \mathbb{Z}/p^{v_p(m)+v_p(k-m)+2-v_p(k)}\xrightarrow{(p^{v_p(k-m)+1-v_p(k)},1)}\mathbb{Z}/p^{v_p(k-m)+1}$\\
\hline
\end{tabular}
\end{center}
All cases follow from the same kernel-and-cokernel calculation. We give the final case, where both summands contribute, since it is the only case not immediate from cyclicity. Suppose the map can be written as $(A,B)$. Because the map is surjective and $v_p(k-m)+1 >v_p(k)$, we see $B$ must be a unit and we can choose it to be $1$. Then $(1,-A)$ generates the kernel of the map, and has order $p^{v_p(m)+1}$ from the short exact sequence. This implies that
\begin{equation*}
    v_p(A)= v_p(k-m)+v_p(m)+2 -v_p(k)-v_p(m)-1 =v_p(k-m)-v_p(k)+1.
\end{equation*}

\subsection{The restriction maps} We next determine the restriction maps. We divide the computation into two cases. When $j=2k$, the restriction map is simply
\begin{equation*}
    \res \colon \pi_{i,2k} b(\sphere_{\K(1)}) \cong \sphere_{\K(1)}^{2k-i} (P_{2k}) \rightarrow \pi_i(\sphere_{K(1)}) \cong \sphere_{K(1)}^{2k-i}(\sphere^{2k}),
\end{equation*}
which is induced by the bottom inclusion map $\sphere^{2k} \rightarrow P_{2k}$. This was already calculated in the previous section. When $k=0$, it is the natural surjection. For $k \neq 0$, we have the following table
\begin{center}
\begin{tabular}{ | m{8cm} | m{10cm}| } 
\hline
 $\mathbf{j=2k}$ \bf{with} $ \mathbf{k\neq 0}$ & \bf{The restriction map} \\
 \hline
 $i=0$  & $\mathbb{Z}_p \xrightarrow{p} \mathbb{Z}_p$   \\ 
 \hline
 $i=-1$ &$\ \mathbb{Z}_p \oplus \mathbb{Z}/p^{v_p(k)} \xrightarrow[]{(1,0)} \mathbb{Z}_p$ \\
\hline
$i=2k-1$ with $k \not\equiv 0$ mod $p-1$& $\mathbb{Z}_p \rightarrow  0$\\
\hline
$i=2k-1$ with $k \equiv 0$ mod $p-1$& $\mathbb{Z}_p \oplus \mathbb{Z}/p^{v_p(k)} \xrightarrow[]{(1,-p)} \mathbb{Z}/p^{v_p(k)+1}$\\
\hline
$i=2m-1, m \neq 0,k$ and $m\equiv 0$ mod $p-1$ with  $v_p(k) > v_p(m)$ & $  \mathbb{Z}/p^{v_p(k-m)+1} \oplus \mathbb{Z}/p^{v_p(m)+1} \xrightarrow[]{(0,-1)} \mathbb{Z}/p^{v_p(m)+1}$\\
\hline
$i=2m-1, m \neq 0,k$ and $m\equiv 0$ mod $p-1$ with  $v_p(k) \leq v_p(m)$ & $ \mathbb{Z}/p^{v_p(k)} \oplus \mathbb{Z}/p^{v_p(m)+v_p(k-m)+2-v_p(k)} \xrightarrow[]{(-p^{v_p(m)-v_p(k)+1},1)} \mathbb{Z}/p^{v_p(m)+1}$\\
\hline
For all other $i$ & $\pi_{i,j}b(\sphere_{K(1)}) \rightarrow 0$\\
\hline
\end{tabular}
\end{center}
For the odd tails $j=2k+1$,  we have the following proposition:
\begin{prop}\label{resodd}
 For any $i,k \in \mathbb{Z}$, the restriction map
 \begin{equation*}
    \res \colon \pi_{i,2k+1} b(\sphere_{K(1)}) \rightarrow \pi_{i-1} \Map(C_{\sigma}, \sphere_{K(1)})
 \end{equation*}
 is the zero map except at degrees $(rq-1, rq-1)$ for some $r \neq0$. In this case, we have the restriction map
    \begin{equation*}
              \res \colon 
        \mathbb{Z}_p \twoheadrightarrow \mathbb{Z}/p  \xhookrightarrow{[(0, \alpha_l, 2\alpha_l, \cdots, (p-1)\alpha_l)]} \mathbb{Z}/p^{v_p(r)+1} \otimes \mathcal{Q}.
    \end{equation*}
\end{prop}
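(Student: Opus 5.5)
Write $j=2k+1$. The restriction on odd spoke weights is induced by the inclusion $C_\sigma\to \sphere^\sigma$ of underlying spectra, i.e.\ by the cofiber map $\sphere^\sigma\to \Sigma {C_p}_+$ smashed with $\sphere^{k\lambda}$. For the Borel-complete $b(\sphere_{\K(1)})$ I would translate it into cohomology, as in \cref{spoketocohomo}. The cofiber sequence ${C_p}_+\wedge \sphere^{k\lambda}\to \sphere^{k\lambda}\to \sphere^{k\lambda+\sigma}$ induces on homotopy orbits the bottom-cell sequence $\sphere^{2k}\to P_{2k}\to P_{2k+1}$. The underlying map $C_\sigma\to \sphere^\sigma$ is $C_p$-equivariant, so its homotopy orbits give a map
\[ (C_\sigma\wedge \sphere^{k\lambda})_{hC_p}\to P_{2k+1}. \]
Here $C_\sigma=\Sigma\,\mathrm{cofib}({C_p}_+\to S^0)$ carries the permutation action, so $(C_\sigma\wedge\sphere^{k\lambda})_{hC_p}\simeq \Sigma^{2k}\bigvee_{p-1}\sphere$ up to the augmentation ideal $\mathcal Q$ of $\mathbb{Z}[C_p]$ (the source of the factor $\otimes\mathcal Q$ in the statement). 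Thus $\res$ becomes the map $\sphere_{\K(1)}^{2k+1-i}(P_{2k+1})\to \sphere_{\K(1)}^{2k+1-i}(\Sigma^{2k}\sphere)\otimes\mathcal Q$ induced by a map
\[ \phi\colon \Sigma^{2k}\sphere\otimes\mathcal Q\to P_{2k+1}. \]
Concretely, $\phi$ is the composite of the bottom $2k$-cell of $P_{2k}$, viewed through the $C_p$-cells $\sphere^{2k}\wedge {C_p}_+$, with the map $P_{2k}\to P_{2k+1}$. It therefore factors through the two-cell complex $P_{2k+1}^{2k+1}\simeq \Sigma^{2k+1}\mathbb{S}$, or more precisely through the bottom cell of $P_{2k+1}$ hit with the degree-$p$ cell.

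For the vanishing, I would compare degrees. \cref{computationatodd} says $\sphere_{\K(1)}^{2k+1-i}(P_{2k+1})$ is nonzero only for $i=2m$, $2k$, or $2k+1$, while the target $\pi_{i-1}\sphere_{\K(1)}\otimes\mathcal Q$ is nonzero only for $i-1\in\{0,-1\}$ or $i-1\equiv -1 \bmod q$. This leaves $i=2k+1$ with $k=0$, $i=0$, and $i=rq$. The first two cases come from $\mathbb{Z}_p$-classes. I would kill them using $\res\circ\tr=\sum_g g$ together with the fact that $\phi$ factors through the bottom cell of $P_{2k+1}$, which lies in the summand $(X_i)^\infty_{2k+1}$ with $i\equiv k+1$ mod $p-1$. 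By \cref{decomposeatpositive} and \cref{decomposeatnegative} that summand is $\sphere_{\K(1)}[\omega^{-i}]$, and its cohomology is concentrated in degrees $\equiv 2i$ mod $q$. Matching this eigenvalue against the degree of the target cell forces the case $2k+1-i\equiv 0$ mod $q$, i.e.\ $i=rq-1+\dots$. After correcting the indices this should pin down exactly $(i,j)=(rq-1,rq-1)$ with $r\neq 0$, where source and target are both $\mathbb{Z}_p$-ish and torsion. I have to recheck this indexing carefully against the table.

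For the nonzero case, I would compute $\phi^*$ on the bottom cell. The bottom cell of $P_{2k+1}$ has a Moore-space attaching map to the $2k$-cell, degree $p$, so the composite $\Sigma^{2k}\sphere\otimes\mathcal Q\to \Sigma^{2k+1}\sphere$ is zero and the restriction only sees the map through $\Sigma^{2k}\sphere/p$. Then $\res$ factors as
\[ \mathbb{Z}_p\twoheadrightarrow \sphere_{\K(1)}^{*}(\sphere/p)\text{-class of order }p\hookrightarrow \pi_{rq-2}\sphere_{\K(1)}. \]
This class is $\alpha_r$, i.e.\ $p^{v_p(r)}\bar\alpha_r$, as in the proof of \cref{attachcell} and \cref{Mark-toda} via $v_1^r$ on $\sphere/p$. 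The vector $(0,\alpha,2\alpha,\dots,(p-1)\alpha)$ records the fact that the generator $g$ of $C_p$ acts on the cell $\sphere^{2k}\wedge g$ through the Thom class of $\lambda$, which is multiplication by $j$ on the $j$-th summand modulo $p$. I would check this by computing $\KU_p$-Euler classes $t^j-1\equiv j(t-1)$. The main obstacle is this last identification: showing that $\phi$ is nonzero on $\K(1)$-cohomology, as opposed to just being allowed to be nonzero, and getting the coefficient vector. I would first try to verify it by restricting the entire computation to $\KU_p$ and taking $\psi^g$-fixed points.
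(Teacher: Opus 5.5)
\textbf{The gap.} The content of the statement is the nonvanishing at $(i,j)=(rq-1,rq-1)$. You must show that $\ker\bigl(\res\colon \sphere^0_{\K(1)}(P_{rq-1})\cong\mathbb{Z}_p\to\pi_{rq-1}\sphere_{\K(1)}\otimes\mathcal{Q}\bigr)$ is $p\mathbb{Z}_p$, not all of $\mathbb{Z}_p$. You flag this as the main obstacle and leave it open.

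Your proposed check in $\KU_p$ with $\psi^g$-fixed points does not see it directly. We have $\KU_p^0(\sphere^{rq-1})=0$, and $\alpha_r$ comes from the cokernel of $\psi^g-1$ on $\KU_p^{-1}(\sphere^{rq-1})$. So the map you need jumps filtration and vanishes on $\KU_p$-cohomology.

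The missing step is exactness of $\sphere^{rq-1}\to P_{rq-1}\to P_{rq}$. It identifies the kernel of restriction to the bottom cell with the image of $\sphere^0_{\K(1)}(P_{rq})\to\sphere^0_{\K(1)}(P_{rq-1})$, i.e.\ of $a_\sigma\colon\pi_{rq,rq}\to\pi_{rq-1,rq-1}$. That map is multiplication by $p$, for two reasons:
\begin{itemize}
\item $\sphere^0_{\K(1)}(P_{rq+1})\to\sphere^0_{\K(1)}(P_{rq})$ is an isomorphism, since $\pi_{rq}\sphere_{\K(1)}=\pi_{rq+1}\sphere_{\K(1)}=0$ for $r\neq0$;
\item the composite $P_{rq-1}\to P_{rq+1}$ has fibre $\Sigma^{rq-1}\sphere/p$, so on $\sphere^0_{\K(1)}$ it is injective with cokernel $\pi_{rq}(\sphere_{\K(1)}/p)\cong\mathbb{Z}/p$ (the next term $\sphere^1_{\K(1)}(P_{rq+1})\cong\mathbb{Z}_p$ is torsion-free).
\end{itemize}
Since your degree vector is nonzero mod $p$, this gives $\ker(\res)=p\mathbb{Z}_p$.

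This is exactly the paper's argument. There it is phrased as $\ker(\res)=\operatorname{Im}\bigl(\pi_{i-1,2k}F(\sphere^{\sigma}\wedge\sphere^{\sigma},X)\to\pi_{i,2k+1}X\bigr)$. The paper reduces this to $\operatorname{Im}(a_\sigma)$ via $\sphere^\sigma\wedge\sphere^\sigma\simeq\sphere^\lambda\oplus\bigvee_{p-2}\Sigma^2{C_p}_+$ and $\pi_{rq}\sphere_{\K(1)}=0$, and reads it off the $a_\sigma$-table. If you carry out your $\KU_p$ computation via the snake lemma, it reduces to the same non-surjectivity.

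\textbf{Smaller points.}

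First, the paper observes that $\res$ lands in the $C_p$-invariants of $\pi_i\sphere_{\K(1)}\otimes\mathcal{Q}$. These are $\pi_i(\sphere_{\K(1)})[p]$, embedded by $\alpha\mapsto[(0,\alpha,2\alpha,\dots,(p-1)\alpha)]$. This kills every case where $\pi_i\sphere_{\K(1)}$ is torsion-free and forces the coefficient vector, so no Euler-class computation is needed.

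Second, your eigenvalue argument with the summand $(X_{i'})^\infty_{2k+1}$ is a legitimate alternative for cutting down degrees. However, it does not exclude $r=0$, i.e.\ $(i,j)=(-1,-1)$, where that summand is $\sphere_{\K(1)}$ itself. There you need the torsion observation ($\pi_{-1}\sphere_{\K(1)}\cong\mathbb{Z}_p$ has no $p$-torsion); the relation $\res\circ\tr=\sum_g g$ plays no role.

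Third, your description of $\phi$ is wrong as written: the bottom cell of $P_{2k}$ followed by $P_{2k}\to P_{2k+1}$ is null. What is true is the following.
\begin{itemize}
\item $\phi\colon\Sigma^{2k}C_\sigma\simeq\bigvee_{p-1}\sphere^{2k+1}\to P_{2k+1}$ factors through the bottom cell $\sphere^{2k+1}$, because $\pi_{2k+1}P_{2k+1}\cong\mathbb{Z}/p$.
\item Its degree vector is the coinvariant map $I\to I/I^2\cong\mathbb{Z}/p$, $\sigma^j-1\mapsto j$, on the augmentation ideal $I=H_1(C_\sigma)$.
\item The target is then $\pi_i\sphere_{\K(1)}\otimes\mathcal{Q}$, not $\pi_{i-1}$, and the relevant group in the nonzero case is $\pi_{rq-1}$, not $\pi_{rq-2}$.
\end{itemize}
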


\begin{proof}
Recall that there is a residual $C_p$-action on $\pi_{i-1}\Map(C_{\sigma}, \sphere_{\K(1)})$, and the restriction map factors through the fixed points of this action. Consider the cofiber sequence $ C_{\sigma} \rightarrow \sphere^1 \wedge {C_p}_+ \rightarrow \sphere^1$, applying $F(-, \sphere_{\K(1)})$ yields a $C_p$-equivariant long exact sequence that breaks as
\begin{equation*}
    0 \rightarrow \pi_i \sphere_{\K(1)} \xrightarrow[]{\Delta} \pi_i\sphere_{\K(1)} \otimes C_p \rightarrow \pi_{i-1}\Map(C_{\sigma},\sphere_{\K(1)}) \rightarrow 0.
\end{equation*}
Let $\mathcal{Q}$ be a $C_p$-module $ \mathbb{ Z}[C_p]/(1+\sigma + \cdots + \sigma^{p-1})$. We identify $\pi_{i-1}\Map(C_{\sigma},\sphere_{\K(1)})$ as $\pi_i \sphere_{K(1)} \otimes \mathcal{Q}$, and the $C_p$-action is of the form
\begin{equation*}
    \sigma [(a_0, \cdots, a_{p-1})]=[(a_1, \cdots, a_{p-1}, a_0)].
\end{equation*}
Its $C_p$-invariants can be identified via the isomorphism
\begin{equation*}
    \pi_i(\sphere_{\K(1)})[p] \xrightarrow[]{\cong} (\pi_{i-1}\Map(C_{\sigma},\sphere_{\K(1)}))^{C_p}, \quad \alpha_l \mapsto [(0, \alpha_l, 2\alpha_l, \cdots, (p-1)\alpha_l)].
\end{equation*}
For degree reasons, the restriction maps can be non-zero only in the following cases:
\begin{itemize}
    \item When $k=0$ and $i=0$, $\pi_{0,1} b(\sphere_{\K(1)}) \rightarrow  \pi_{-1} \Map(C_{\sigma}, \sphere_{\K(1)}), \quad \mathbb{Z}_p \rightarrow  \mathbb{Z}_p \otimes \mathcal{Q}$;
    \item When $k \neq 0$ and $i=0$, $\pi_{0,2k+1} b(\sphere_{\K(1)}) \rightarrow \pi_{-1} \Map(C_{\sigma}, \sphere_{\K(1)}), \quad \mathbb{Z}/p^{v_p(k)+1} \rightarrow\mathbb{Z}_p \otimes \mathcal{Q} $;
    \item  When $2k+1 = mq-1$ for some integer $m$ and $i = 2k+1$, 
    $$\pi_{2k+1,2k+1} b(\sphere_{\K(1)}) \rightarrow \pi_{2k} \Map(C_{\sigma}, \sphere_{\K(1)}), \quad \mathbb{Z}_p \rightarrow \mathbb{Z}/p^{v_p(m)+1} \otimes \mathcal{Q} .$$
\end{itemize}
The first two cases are zero maps because the targets have no $p$-torsion elements. For the last case, when $m\neq 0$, the target also has no $p$-torsion. When $m\neq 0$, we use the relation
$$\operatorname{Ker}(\res\colon \pi_{i,2k+1} X \rightarrow \pi_{i-1} \Map(C_{\sigma}, X^e) )\cong \operatorname{Im}( \pi_{i-1, 2k}F(\sphere^{\sigma} \wedge \sphere^{\sigma}, X) \rightarrow \pi_{i,2k+1} X).$$
By \cref{Ssigma}, we have $\sphere^{\sigma} \wedge  \sphere^{\sigma} \cong \sphere^{\lambda} \oplus \bigvee_{p-2} \Sigma^2 {C_p}_+$, which implies
\begin{equation*}
    \pi_{i-1, 2k}F(\sphere^{\sigma} \wedge \sphere^{\sigma}, b(\sphere_{\K(1)})) \cong \pi_{i+1,j+1} b(\sphere_{\K(1)}) \oplus (\pi_{i+1}\sphere_{\K(1)})^{p-2}.
\end{equation*}
Notice that $\pi_{i+1}\sphere_{K(1)} = \pi_{mq} \sphere_{K(1)} =0$ because $m\neq 0$. We see
\begin{equation*}
    \operatorname{Ker}(\res) \cong \operatorname{Im} (a_{\sigma} \colon \pi_{mq, mq} b(\sphere_{K(1)}) \rightarrow \pi_{mq-1, mq-1} b(\sphere_{K(1))})).
\end{equation*}
Our computation of $a_{\sigma}$ shows that this map is the times $p$ map with image being $p\mathbb{Z}$, which implies that the restriction map 
\end{proof}

\subsection{The transfer maps} Finally, we determine the transfer maps. For the even tails, the transfer map is
\begin{equation*}
   \tr \colon \pi_{i} \sphere_{K(1)} \rightarrow \pi_{i,2k} b(\sphere_{\K(1)}).
\end{equation*}
Using the relation that the image of the transfer equals the kernel of $a_{\sigma}$, we have the following tables:
\begin{center}
\begin{tabular}{ | m{8cm} | m{10cm}| } 
\hline
 $\mathbf{j=0}$ & \bf{The transfer map} \\
\hline
 $i=0,-1$ &$\mathbb{Z}_p  \xrightarrow{(p,-1)}  \mathbb{Z}_p \oplus \mathbb{Z}_p$\\
\hline
$i=2m-1$,  $m \not\equiv 0$ mod $p-1$  & $0{\rightarrow}\mathbb{Z}/p^{v_p(m)+1}$ \\
\hline
$i=2m-1$ and $m\equiv 0$ mod $p-1$ & $ \mathbb{Z}/p^{v_p(m)+1} \xrightarrow[]{(p,-1)} \mathbb{Z}/p^{v_p(m)+1} \oplus \mathbb{Z}/p^{v_p(m)+1} $\\
\hline
\hline
\end{tabular}
\end{center}

\begin{center}
\begin{tabular}{ | m{8cm} | m{10cm}| } 
\hline
 $\mathbf{j=2k}$ \bf{with} $ \mathbf{k\neq 0}$ & \bf{The transfer map} \\
 \hline
 $i=0$  & $\mathbb{Z}_p \xrightarrow{\cong} \mathbb{Z}_p$   \\ 
 \hline
$i=-1$ &$\ \mathbb{Z}_p  \xrightarrow[]{(p,-1)} \mathbb{Z}_p \oplus \mathbb{Z}/p^{v_p(k)}$ \\
\hline
$i=2k-1$ with $k \not\equiv 0$ mod $p-1$& $0 \rightarrow \mathbb{Z}_p $\\
\hline
$i=2k-1$ with $k \equiv 0$ mod $p-1$& $ \mathbb{Z}/p^{v_p(k)+1} \xrightarrow[]{(0,-1)} \mathbb{Z}_p \oplus \mathbb{Z}/p^{v_p(k)} $\\
\hline
$i=2m-1, m \neq 0,k$ and $m\equiv 0$ mod $p-1$ with  $v_p(k) > v_p(m)$ & $  \mathbb{Z}/p^{v_p(m)+1} \xrightarrow[]{(-1,p)} \mathbb{Z}/p^{v_p(k-m)+1} \oplus \mathbb{Z}/p^{v_p(m)+1} $\\
\hline
$i=2m-1, m \neq 0,k$ and $m\equiv 0$ mod $p-1$ with  $v_p(k) \leq v_p(m)$ & $ \mathbb{Z}/p^{v_p(m)+1} \xrightarrow[]{(p^{v_p(m)-v_p(k)}+p^{v_p(k-m)-v_p(k)})^{-1}(-1, p^{v_p(k-m)+1-v_p(k)})}\mathbb{Z}/p^{v_p(k)} \oplus \mathbb{Z}/p^{v_p(m)+v_p(k-m)+2-v_p(k)}$\\
\hline
All other $i$ & $0 \rightarrow \pi_{i,j}b(\sphere_{K(1)}) $\\
\hline
\end{tabular}
\end{center}
The displayed unit is chosen so that $\res \tr =p$ holds exactly. The odd-weight transfer is almost always zero; the sole exception is described below.
\begin{prop}
    For any $i,k \in \mathbb{Z}$, the transfer map 
    \begin{equation*}
        \tr \colon \pi_{i-1} \Map(C_{\sigma}, \sphere_{K(1)}) \rightarrow \pi_{i, 2k+1} b(\sphere_{K(1)})
    \end{equation*}
    is the zero map except when $k \neq 0$ and $i=0$. In this case, we have the transfer map
    \begin{equation*}
              \tr \colon 
        \mathbb{Z}_p \otimes \mathcal{Q} \twoheadrightarrow \mathbb{Z}/p  \xrightarrow[]{1\mapsto p^{v_p(k)}} \mathbb{Z}/p^{v_p(k)+1}.
    \end{equation*}
\end{prop}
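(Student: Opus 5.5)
First I locate the possible nonzero transfers by degree. The source is $\pi_{i-1}\Map(C_\sigma,\sphere_{\K(1)})\cong \pi_i\sphere_{\K(1)}\otimes\mathcal{Q}$, using the short exact sequence $0\to\pi_i\sphere_{\K(1)}\xrightarrow{\Delta}\pi_i\sphere_{\K(1)}\otimes C_p\to\pi_{i-1}\Map(C_\sigma,\sphere_{\K(1)})\to 0$ from the proof of \cref{resodd}. This group vanishes unless $i=0$, $i=-1$, or $i=rq-1$ with $r\neq 0$. The target $\pi_{i,2k+1}b(\sphere_{\K(1)})$ is computed in \cref{computationatodd}; it is nonzero only for $i=2m$ or $i=2k+1$. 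Since $-1$ and $rq-1$ are odd, the only candidates are:
\begin{itemize}
\item $i=0$, with target $\mathbb{Z}/p^{v_p(k)+1}$ for $k\neq0$, or $\mathbb{Z}_p$ for $k=0$;
\item $i=2k+1=rq-1$, with target $\mathbb{Z}_p$.
\end{itemize}
In the second case the source is the torsion group $\mathbb{Z}/p^{v_p(r)+1}\otimes\mathcal{Q}$ and the target is torsion free, so the transfer vanishes.

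For $k=0$ and $i=0$ I use the splitting noted for the $a_\sigma$-map: when $k=0$ the cofiber sequence $\sphere^0\to P_0\to P_1$ splits, so $a_\sigma\colon\pi_{0,1}\to\pi_{-1,0}$ is injective. The image of the transfer is contained in $\Ker(a_\sigma)$ by the listed relations, so it is zero.

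The main case is $k\neq0$, $i=0$. First I show the transfer is nonzero. I would use the double coset formula $\tr\circ\res=[C_p/e]\cdot(-)$ on $\pi_{0,2k+1}$. The restriction there is zero by \cref{resodd}, which only says that $\tr\circ\res=0$, so this alone gives no information. Instead I would use the description of the image of the transfer as the kernel of $\pi_{0,2k+1}X\to\pi_{-1,2k}F(\sphere^\sigma\wedge\sphere^{-\sigma},X)$. By \cref{Ssigma}, $\sphere^\sigma\wedge\sphere^{-\sigma}\cong\sphere\oplus\bigvee_{p-2}{C_p}_+$, so this map is $(a_\sigma,\ \text{restriction-type components})$ into $\pi_{-1,2k}\oplus(\pi_{-1}\sphere_{\K(1)})^{p-2}$. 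From the $a_\sigma$ table at $(i,j)=(0,2k+1)$, the map $a_\sigma\colon\mathbb{Z}/p^{v_p(k)+1}\to\mathbb{Z}_p\oplus\mathbb{Z}/p^{v_p(k)}$ is $(0,1)$, whose kernel is exactly $p^{v_p(k)}\mathbb{Z}/p^{v_p(k)+1}\cong\mathbb{Z}/p$. The remaining components go to the torsion-free group $\mathbb{Z}_p^{p-2}$ and must vanish on a torsion group. Hence the image of the transfer is $\Ker(a_\sigma)$, the cyclic subgroup of order $p$ generated by $p^{v_p(k)}$.

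Finally, the transfer factors as $\mathbb{Z}_p\otimes\mathcal{Q}\twoheadrightarrow\mathbb{Z}/p\hookrightarrow\mathbb{Z}/p^{v_p(k)+1}$, with $1\mapsto p^{v_p(k)}$ up to a unit. It is surjective onto an order-$p$ group and so factors through the mod $p$ reduction of $\mathcal{Q}$ followed by a functional. The transfer also factors through $C_p$-coinvariants, and $\mathcal{Q}_{C_p}\cong\mathbb{Z}/p$ since $\mathcal{Q}/(\sigma-1)=\mathbb{Z}[\sigma]/(\sigma-1,\,1+\dots+\sigma^{p-1})=\mathbb{Z}/p$. This yields the stated surjection onto $\mathbb{Z}/p$.

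I expect the main obstacle to be justifying that the image of the transfer is exactly $\Ker(a_\sigma)$ here, rather than only a subgroup of it, since the paper states the finer kernel relation only as a containment. If the $\sphere^\sigma\wedge\sphere^{-\sigma}$ argument is too delicate, my fallback is to compute the transfer directly. Dually to the restriction, it is the map $\Sigma^{-1}C_\sigma\wedge{C_p}_+\to\ldots$ corresponding to the cofiber sequence $P_{2k}\to P_{2k+1}$ together with the bottom cell $\sphere^{2k}$. That cell carries the attaching map $\bar\alpha_{l+1}$ from \cref{attachcell}, and its image detects $p^{v_p(k)}$.
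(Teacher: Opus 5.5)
There is a hole in your degree bookkeeping. You correctly list $i=-1$ among the degrees where the source $\pi_i\sphere_{\K(1)}\otimes\mathcal{Q}$ is nonzero, but then drop it. Since $i=-1$ is odd and equals $2k+1$ for $k=-1$, the pair $(i,k)=(-1,-1)$ is a candidate. Its source is $\pi_{-2}\Map(C_\sigma,\sphere_{\K(1)})\cong\pi_{-1}\sphere_{\K(1)}\otimes\mathcal{Q}\cong\mathbb{Z}_p\otimes\mathcal{Q}$, and its target is $\pi_{-1,-1}b(\sphere_{\K(1)})\cong\mathbb{Z}_p$. This is the $r=0$ instance of your second bullet. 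Your reason there (torsion source, torsion-free target) does not apply, because $\mathbb{Z}_p\otimes\mathcal{Q}$ is free of rank $p-1$.

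The repair is a fact you only use at the very end: the transfer factors through the coinvariants $(\pi_i\sphere_{\K(1)}\otimes\mathcal{Q})_{C_p}\cong\pi_i\sphere_{\K(1)}/p$. These are $p$-torsion in every degree, so the transfer vanishes whenever its target is torsion-free. This is how the paper disposes of all the non-main cases at once: $k=0$ with $i=0$, and $i=2k+1=mq-1$ for every $m$, including $m=0$. Alternatively, $\operatorname{Im}(\tr)\subseteq\Ker(a_\sigma)$, and the $a_\sigma$ table says $a_\sigma\colon\pi_{2k+1,2k+1}\to\pi_{2k,2k}$ is an isomorphism for $k\neq0$, in particular for $k=-1$. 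That is the same device you already use for $k=0$.

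Apart from this omission, your proof is the paper's. For $k\neq0$, $i=0$ you do the following:
\begin{itemize}
\item use $\operatorname{Im}(\tr)=\Ker\bigl(\pi_{0,2k+1}X\to\pi_{-1,2k}F(\sphere^{\sigma}\wedge\sphere^{-\sigma},X)\bigr)$;
\item split off the $(\pi_{-1}\sphere_{\K(1)})^{p-2}$ summands by torsion-freeness;
\item read $\Ker(a_\sigma)=p^{v_p(k)}\mathbb{Z}/p^{v_p(k)+1}$ from the $(0,1)$ entry of the $a_\sigma$ table;
\item factor through $\mathcal{Q}_{C_p}\cong\mathbb{Z}/p$.
\end{itemize}
The obstacle you anticipate is not real. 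The paper states that relation as an equality; only its comparison with $\Ker(a_\sigma)$ is a containment. Your torsion-freeness step is exactly what upgrades it to $\operatorname{Im}(\tr)=\Ker(a_\sigma)$, so the fallback is unnecessary.
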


\begin{proof}
    Dually to the restriction calculation, the transfer map factors through the $C_p$-coinvariants of $\pi_{i-1} \Map(C_{\sigma}, \sphere_{K(1)})$. As in the proof of \cref{resodd}, we can identify the $C_p$-coinvariant as 
    \begin{equation*}
        \pi_i(\sphere_{\K(1)})/p \xrightarrow[]{\cong} (\pi_{i-1} \Map(C_{\sigma}, \sphere_{\K(1)}))_{C_p}, \quad \bar{\alpha}_l \mapsto [(\bar{\alpha}_l, 0,\cdots, 0 )] 
    \end{equation*}
    Again, by degree reasons, the transfer map can be non-zero only in the following cases:
    \begin{itemize}
    \item When $k=0$ and $i=0$, $\pi_{-1} \Map(C_{\sigma}, \sphere_{\K(1)})\rightarrow \pi_{0,1} b(\sphere_{\K(1)}), \quad   \mathbb{Z}_p \otimes \mathcal{Q}\rightarrow \mathbb{Z}_p$;
    \item When $k \neq 0$ and $i=0$, $  \pi_{-1} \Map(C_{\sigma}, \sphere_{\K(1)}) \rightarrow \pi_{0,2k+1} b(\sphere_{\K(1)}), \quad \mathbb{Z}_p \otimes \mathcal{Q}  \rightarrow \mathbb{Z}/p^{v_p(k)+1} $;
    \item  When $2k+1 = mq-1$ for some integer $m$ and $i = 2k+1$, 
    $$ \pi_{2k} \Map(C_{\sigma}, \sphere_{\K(1)}) \rightarrow \pi_{2k+1,2k+1} b(\sphere_{\K(1)}), \quad \mathbb{Z}/p^{v_p(m)+1} \otimes \mathcal{Q} \rightarrow \mathbb{Z}_p .$$
\end{itemize}
For the first and the third case, the transfer maps factor through $\mathbb{Z}/p$ and map into torsion free groups, so they vanish. For the second case, we use the relation 
 $$
     \operatorname{Im} (\tr \colon \pi_{-1} \Map(C_{\sigma}, X) \rightarrow \pi_{0,2k+1}X) \cong \operatorname{Ker}(\pi_{0, 2k+1} X \rightarrow \pi_{-1,2k} F(\sphere^{\sigma} \wedge \sphere^{-\sigma}, X) )
$$
Similarly, by \cref{Ssigma}, we learn that 
\begin{equation*}
    \pi_{-1, 2k}F(\sphere^{\sigma} \wedge \sphere^{-\sigma}, b(\sphere_{\K(1)})) \cong \pi_{-1,2k} b(\sphere_{\K(1)}) \oplus (\pi_{-1}\sphere_{\K(1)})^{p-2}.
\end{equation*} 
Because $\pi_{-1} \sphere_{\K(1)} \cong \mathbb{Z}_p$ and the transfer map factors through $\mathbb{Z}/p$, we learn again that 
\begin{equation*}
    \operatorname{Im}(\tr) \cong \operatorname{Ker}(a_{\sigma} \colon \pi_{0,2k+1} b(\sphere_{\K(1)}) \rightarrow \pi_{-1,2k} b(\sphere_{\K(1)})),
\end{equation*}
From the computation table of $a_{\sigma}$-maps, the $a_{\sigma}$ table therefore forces the transfer to have the stated form.
\end{proof}

\section{Differentials in the Atiyah-Hirzebruch Spectral Sequence}
By \cref{spoketocohomo}, the spoke-graded groups are $\sphere_{\K(1)}$-cohomology groups of the $P_j$, so they can be studied by the cohomological Atiyah-Hirzebruch spectral sequences. In this section, we establish the differentials in the spectral sequence. The differentials are essentially discussed in section $1.5$ of \cite{Ravenel2003}. We provide a detailed proof here using filtered spectra and the lambda-Bockstein formalism.

We start with a quick overview of the construction of the cohomological Atiyah-Hirzebruch spectral sequence as a $\lambda$-torsion Bockstein spectral sequence. The objects are considered in the category $\operatorname{Tow}(\Sp) =\operatorname{Fun}(\mathbb{Z}^{\operatorname{op}}, \Sp)$, which is categorically the same as $\operatorname{Fil}(\Sp)$.  We use tower notation to emphasize that the spectral sequence is built from a tower rather than from an increasing filtration.

Let us consider a right-bounded tower in $\operatorname{Tow}(\Sp) =\operatorname{Fun}(\mathbb{Z}^{\operatorname{op}}, \Sp)$:
\begin{equation*}
    X_{\star} = \{\cdots \rightarrow X_{n+k} \rightarrow X_{n+k-1} \rightarrow \cdots \rightarrow X_{n} \rightarrow 0 \rightarrow \cdots \}
\end{equation*}
We denote $\operatorname{F}_s X_{\star} = \fib(X_s \rightarrow X_{s-1})$. The tower gives a spectral sequence with signature:
\begin{equation*}
    E_1^{t,s} \cong \pi_{t} (\operatorname{F}_sX) \Rightarrow \pi_t \lim X_{\star}
\end{equation*}
with differentials $d_r \colon E_r^{t,s} \rightarrow E_r^{t-1,s+r}$. Convergence requires a separate argument; below we assume strong convergence. Let $\sphere^{u, v} \in \operatorname{Fil}(\Sp)$ be a filtered spectrum starting at level $v$ as follows
\begin{equation*}
    \cdots \rightarrow 0 \rightarrow \sphere^u \xrightarrow[]{\id} \sphere^u \xrightarrow[]{\id} \cdots.
\end{equation*}
We have the filtered homotopy group defined as
\begin{equation*}
    \pi_{u,v} X_{\star} \colon = \pi_0 \operatorname{Map}(\sphere^{u,v}, X_{\star}) \cong \pi_u(X_v).
\end{equation*}
There is the canonical map $\lambda \colon \sphere^{0,-1} \rightarrow \sphere^{0,0}$ which equips filtered homotopy groups with a canonical $\mathbb{Z}[\lambda]$-structure, and we define
\begin{equation*}
    X_{\star}[\lambda^n] = \fib(X_{\star} \xrightarrow[]{\lambda^n} \Sigma^{0,n}X_{\star}).
\end{equation*}
The above spectral sequence can be rebuilt as a $\lambda$-torsion Bockstein spectral sequence by considering the following filtration:
\begin{equation*}
    0 \rightarrow X_{\star}[\lambda] \rightarrow X_{\star}[\lambda^2] \rightarrow X_{\star}[\lambda^3] \rightarrow \cdots 
\end{equation*}
Consider a $\mathbb{Z}$-module $T_{\lambda}=\frac{\mathbb{Z}[\lambda^{\pm}]}{\lambda \mathbb{Z[\lambda]}}$ and define $\Gamma_{\lambda}(X_{\star}) = \colim_{\lambda} X_{\star}[\lambda^n]$. The associated spectral sequence has signature
\begin{equation*}
    E_1^{t,s,w} \cong E_1^{t,s} \otimes T_{\lambda} \Rightarrow \pi_{t,w} (\Gamma_{\lambda}(X_{\star})) \cong \pi_{t,w}(X_{\star}),
\end{equation*}
where the last equivalence is true assuming that $\lim^1$ of the tower is $0$. The element $x\in E_1^{t,s}$ has tridegree $(t,s,s)$, the element $\lambda^{-1}$ has tridegree $(0,0,1)$ and the Bockstein differentials have the form $d_r^{\Gamma} : E_r^{t,s,w} \rightarrow E_r^{t-1, s+r, w}$. 

\begin{remark} \label{bocksteindiff}
The differentials in this spectral sequence are rigid in the sense that a differential $d_r(x) =y$ in the classical spectral sequence exists if and only if there is a Bockstein differential $d_r^{\Gamma}(\lambda^{-r}x)=y$.
By construction, an element $x \in E_1^{t,s}$ has an outgoing non-zero $d_r$-differential if and only if the image of $x$ along the natural map $\operatorname{F}_s X_{\star} \rightarrow X_s$ is $\lambda^{r-1}$-divisible but not $\lambda^r$-divisible.    
\end{remark}
 For the spectrum $P_{2n+1}$, we consider the filtered spectrum induced by its minimal skeletal filtration $P_{2n+1}^{\star}$. Applying $F(-, \sphere_{\K(1)})$, we obtain a decreasing filtration as follows:
% https://q.uiver.app/#q=WzAsMTEsWzEsMCwiRihQX3sybisxfV57Mm4raysxfSxFKSJdLFsyLDAsIkYoUF97Mm4rMX1eezJuK2t9LEUpIl0sWzMsMCwiXFxjZG90cyJdLFs0LDAsIkYoUF97Mm4rMX1eezJuKzJ9LEUpIl0sWzUsMCwiRihcXHNwaGVyZV57Mm4rMX0sRSkiXSxbNiwwLCIwIl0sWzEsMSwiRihcXHNwaGVyZV57Mm4raysxfSxFKSJdLFsyLDEsIkYoXFxzcGhlcmVeezJuK2t9LEUpIl0sWzQsMSwiRihcXHNwaGVyZV57Mm4rMn0sRSkiXSxbNSwxLCJGKFxcc3BoZXJlXnsybisxfSxFKSJdLFswLDAsIlxcY2RvdHMiXSxbMCwxXSxbMSwyXSxbMiwzXSxbMyw0XSxbNCw1XSxbNiwwXSxbNyw2LCJkXzEiLDAseyJzdHlsZSI6eyJib2R5Ijp7Im5hbWUiOiJkYXNoZWQifX19XSxbOCwzXSxbOSw0XSxbOSw4LCJkXzEiLDAseyJzdHlsZSI6eyJib2R5Ijp7Im5hbWUiOiJkYXNoZWQifX19XSxbMTAsMF0sWzcsMV0sWzEsNiwiIiwxLHsic3R5bGUiOnsiYm9keSI6eyJuYW1lIjoiZGFzaGVkIn19fV0sWzQsOCwiIiwyLHsic3R5bGUiOnsiYm9keSI6eyJuYW1lIjoiZGFzaGVkIn19fV1d
\[\begin{tikzcd}
	\cdots & {F(P_{2n+1}^{2n+k+1},\sphere_{\K(1)})} & {F(P_{2n+1}^{2n+k},\sphere_{\K(1)})} & \cdots & {F(P_{2n+1}^{2n+2},\sphere_{\K(1)})} & {F(\sphere^{2n+1},\sphere_{\K(1)})} & 0 \\
	& {F(\sphere^{2n+k+1},\sphere_{\K(1)})} & {F(\sphere^{2n+k},\sphere_{\K(1)})} && {F(\sphere^{2n+2},\sphere_{\K(1)})} & {F(\sphere^{2n+1},\sphere_{\K(1)})}
	\arrow[from=1-1, to=1-2]
	\arrow[from=1-2, to=1-3]
	\arrow[from=1-3, to=1-4]
	\arrow[dashed, from=1-3, to=2-2]
	\arrow[from=1-4, to=1-5]
	\arrow[from=1-5, to=1-6]
	\arrow[from=1-6, to=1-7]
	\arrow[dashed, from=1-6, to=2-5]
	\arrow[from=2-2, to=1-2]
	\arrow[from=2-3, to=1-3]
	\arrow["{d_1}", dashed, from=2-3, to=2-2]
	\arrow[from=2-5, to=1-5]
	\arrow[from=2-6, to=1-6]
	\arrow["{d_1}", dashed, from=2-6, to=2-5]
\end{tikzcd}\]
The associated spectral sequence has $E_1$-page
\begin{equation*}
    E_1^{t,s} \cong
    \pi_{t+s} \sphere_{\K(1)} \Rightarrow \pi_{t}F(P_{2n+1}, \sphere_{\K(1)}) \cong \pi_t L_{K(1)}(P_{2n+1}).
\end{equation*}
with differentials $d_r: E_r^{t,s} \rightarrow E_r^{t-1, s+r}$. The classes in the $E_1$-page are generated as abelian groups by $1[k] \in E_1^{-k,k}$ and $\bar{\alpha}_l[k] \in E_1^{lq-1-k,k}$. The even cells of $P_{2n+1}$ are attached to the lower cell by the map $p \in [\sphere, \sphere] \cong \mathbb{Z}$. This gives the $d_1$-differentials of the spectral sequence as $$d_1(\ba_l[2k-1])= p\ba_l[2k], \quad d_1(1[2k-1]) =p 1[2k]$$ 
and produces the $E_2$-page as
\begin{equation*}
    E_2^{t,s} \cong \mathbb{Z}/p \{\ba_{l}[2k], \alpha_l[2k-1], 1[2k] | \text{ for } k \geq n+1, l\in \mathbb{Z} \text{ except for } {\alpha}_0[2k-1] \}.
\end{equation*}
We will show the following theorem:
\begin{theorem} \label{BCPAHSS}
    The surviving cycles in this spectral sequence are
    \begin{itemize}
        \item At stem $t=0, -1$, all the classes ${\alpha}_{l}[lq-1]$, $\ba_l[lq]$ for $lq \geq 2n+2$. If $n \leq -1$, there is also the class $1[0]$.
        \item At stem $t= 2k-1$ with some integer $k \neq 0$, the lowest $v_p(k)+1$ classes are permanent cycles. To be more specific, we let $d \in \{1, \cdots, p-1\}$ be the integer such that $d \equiv -k-n \mod{p-1}$. Then the permanent classes are $ \ba_{\frac{2k+2n+2d}{q}+i}[2n+2d+iq]$ for $0 \leq i \leq v_p(k)$.
    \end{itemize}
   The differentials in this spectral sequence have the following two types:
    \begin{itemize}
        \item For the classes $1[2k]$ with $k \neq0$, they support differentials of length $r= (v_p(k)+1)q$ as
        \begin{equation*}
            d_{r}(1[2k]) = \ba_{v_p(k)+1} [2k+r].
        \end{equation*}
        \item  At stem $t=2k$ with $k \neq 0$, all the other classes except for $1[-2k]$ support differentials of length $r = (v_p(k)+1)q+1$. Namely, for a class ${\alpha}_l[lq-2k-1]$ in stem $2k$, it supports a differential
        \begin{equation*}
            d_{r} ({\alpha}_l[lq-2k-1]) = \ba_{l+v_p(k)+1}[lq-2k-1+r]
        \end{equation*}
    \end{itemize}
\end{theorem}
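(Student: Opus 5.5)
Our plan is to use the divisibility criterion of \cref{bocksteindiff}. We feed in the finite-stage computations of $\sphere_{\K(1)}^*$ of stunted lens spectra (the proposition preceding \cref{Xinoextension}, \cref{Xinoextension} itself, and \cref{computationatodd}), which fix the lengths of differentials completely. A class $x\in E_1^{t,s}$ supports a $d_r$ exactly when its image in $F(P^{s}_{2n+1},\sphere_{\K(1)})$ lifts through $r-1$ but not $r$ further skeleta. So the question becomes: in which skeleton $P_{2n+1}^{N}$ does the class first die in cohomology, and in which does it first become nonzero?

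\textbf{Step 1 (sparseness and reduction).} Split $P_{2n+1}=\bigvee_i (X_i)_{2n+1}^\infty$, as in \cref{decomposeatpositive} and \cref{decomposeatnegative}; the spectral sequence splits the same way. Each summand has cells only in dimensions $\equiv 2i-1,2i \bmod q$, and $\pi_*\sphere_{\K(1)}$ is concentrated in degrees $\equiv 0,-1 \bmod q$. Hence the $E_2$-page is sparse, and $d_r$ can be nonzero only for $r\equiv 0,1 \bmod q$. In stem $t$, \cref{computationatodd} tells us the abutment: $\mathbb{Z}_p$ in stems $0,-1$; $\mathbb{Z}/p^{v_p(k)+1}$ in stem $2k-1$; and $0$ in every other stem, in particular in stem $2k$ with $k\neq 0$.

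\textbf{Step 2 (permanent cycles via finite skeleta).} Using the proposition before \cref{Xinoextension}, we have $\sphere_{\K(1)}^{m}((X_i)^{(n'+K)q+2i}_{n'q+2i-1})\cong\mathbb{Z}/p^{\min\{K+1,v_p(a)+1\}}$. Let $K$ grow one cell pair at a time. In stem $2k-1$ the order of the group increases by exactly $p$ at each step, up to $K=v_p(k)$, and then stabilises. The associated graded piece added at each step is the $E_2$ class $\ba_{\ast}[2n+2d+iq]$ sitting in the new cells. So these $v_p(k)+1$ bottom classes are exactly the permanent cycles of stem $2k-1$, and every higher class in that stem must be hit.

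Stems $0,-1$ are handled the same way, using the $\mathbb{Z}_p$ in \cref{Xinoextension} with $a=0$. The $1[0]$ class for $n\le -1$ is the bottom $\sphere^0$ cell, which survives because $\sphere_{\K(1)}^0(P_{2n+1})\cong\mathbb{Z}_p$ in the summand $i=p-1$.

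\textbf{Step 3 (differentials and their lengths).} In stem $2k$ with $k\neq0$, the abutment vanishes, so every class must support or receive a differential. In the finite stage, the group in stem $2k$ is $\mathbb{Z}/p^{\min\{K+1,v_p(k)+1\}}$ and is nonzero. The bottom class $1[2k]$, or more generally $\alpha_l[lq-2k-1]$, therefore remains nonzero in $F(P^{N},\sphere_{\K(1)})$ for $N<2k+(v_p(k)+1)q$. It becomes zero precisely when the next cell pair beyond $v_p(k)+1$ steps is added. This is the $\lambda^{r-1}$- but not $\lambda^r$-divisibility statement, with $r=(v_p(k)+1)q$ for $1[2k]$ (an even-cell class) and $r=(v_p(k)+1)q+1$ for $\alpha_l$ (an odd-cell class). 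Sparseness then forces the targets. The target must lie in stem $2k-1$ at filtration $s+r$, and there the unique $E_2$ generator is $\ba_{l+v_p(k)+1}[\cdots]$, which is nonzero mod the permanent classes found in Step 2. The required identification $d_r(1[2k])=\ba_{v_p(k)+1}[2k+r]$ is consistent with \cref{attachcell}: the co-attaching map from the bottom cell is $\ba_{l+1}$ after James periodicity. We use this to pin down the targets up to units. Finally, counting confirms the picture: the sources in stem $2k$ exactly pair with the non-surviving classes of stem $2k-1$.

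\textbf{Main obstacle.} The delicate point is Step 3: converting the finite-skeleton cohomology orders into the precise $\lambda$-divisibility, and ensuring the differential does not occur at an earlier sparse length $r'\equiv 0,1 \bmod q$. We plan to handle this by induction on $K$, using the long exact sequences of $(X_i)^{N-2}\to (X_i)^{N}\to \Sigma^{N-1}\sphere/p$. At each step we check whether $\sphere_{\K(1)}^{*}(\Sigma^{N-1}\sphere/p)$ is absorbed or detects a new class, by comparing orders with $\min\{K+1,v_p(k)+1\}$.
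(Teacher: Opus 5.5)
Your proposal is correct in outline and uses the same method as the paper: the divisibility criterion of \cref{bocksteindiff}, fed by the finite-skeleton groups of \cref{Xinoextension}. The organization differs. The paper gets the family $d_r(1[2k])$ from the co-attaching map in \cref{attachcell}. It proves the divisibility claim only for the bottom class $\alpha_l[2n+1]$, transports it to higher classes by naturality along $P_{2n+1}\to P_{2n'+1}$, and reads off the survivors once the differentials are known. You instead prove the survivors directly, by truncating to finite skeleta and counting. You also treat both families, at every position, by one divisibility argument, and use \cref{attachcell} only to corroborate the targets. Your uniform route avoids a check that is tacit in the paper's naturality step: that the transported target is not already killed by a shorter differential from the cells between $2n+1$ and $2n'$. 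The paper's route is shorter and ties the first family directly to the geometric attaching map $\ba_{v_p(k)+1}$.

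For Step 3 to go through, you must make the transition maps explicit. Knowing that the finite-stage group is nonzero does not by itself imply that a given class lifts, so the ``therefore'' as written does not follow.

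In the summand $i\equiv -k$, adding one cell pair gives an exact sequence $0\to\mathbb{Z}/p\to A_{K+1}\to A_K\xrightarrow{\partial}\mathbb{Z}/p\to B_{K+1}\to B_K\to 0$. The neighbouring stems vanish in this summand because $p$ is odd. Here $A_K$ and $B_K$ are the stem-$2k$ and stem-$(2k-1)$ groups of the skeleton with $K+1$ pairs, both cyclic of order $p^{\min\{K+1,v_p(k)+1\}}$. Comparing orders shows that $A_{K+1}\to A_K$ is onto for $K<v_p(k)$ and is multiplication by $p$ up to a unit for $K\ge v_p(k)$; this is exactly what your planned induction produces.

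A class at pair $j$ maps to a generator of $\ker(A_j\to A_{j-1})$. Composing the transition maps shows it lifts through exactly $v_p(k)$ further full pairs, for every $j$. It also lifts over the odd cell of the next pair, because that obstruction group is a homotopy group of $\sphere_{\K(1)}$ in degree $\equiv -2 \pmod q$, hence zero. This gives lengths exactly $(v_p(k)+1)q$ for the even-cell class and $(v_p(k)+1)q+1$ for odd-cell classes, and rules out the earlier sparse lengths.

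Also fix two indexing slips. In stem $2k$ the exceptional class is $1[-2k]$; the class $1[2k]$ lies in stem $-2k$ and is generally not the bottom class. In stems $0,-1$ the finite-stage groups are $\mathbb{Z}/p^{K+1}$, not $\mathbb{Z}_p$.
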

   \cref{fig:AHSS_chart} displays the spectral sequence for $P_{-3}$ at $p=3$:
\begin{figure}[H]
   \centering
   \scalebox{0.8}{\input{AHSS_chart}}
   \caption{Atiyah-Hirzebruch spectral sequence for $\sphere_{\K(1)}^*(P_{-3})$ at $p=3$, with $-20 \leq t \leq 20, -3 \leq s \leq 20$.}
   \label{fig:AHSS_chart}
\end{figure}

\begin{proof}
First, note that the surviving classes are forced once we prove the existence of the differentials. The first family of differentials follows directly from \cref{attachcell}. Because the primary coattaching map of cell $2k$ is an element in the image of $J$, they must be detected non-trivially by those differentials in the spectral sequence.

For the second type of differentials, we will first show the following claim:
  \begin{claim}
     For $l \in \mathbb{Z}$, set $r=v_p(lq-2n-2)+1$. If $r \neq \infty$, then the class ${\alpha}_l[2n+1] \in E_2^{lq-2n-2,2n+1}$ is $\lambda^{(r-1)q+1}$-divisible and not $\lambda^{rq+1}$-divisible.
  \end{claim}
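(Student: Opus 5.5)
Via \cref{bocksteindiff}, the Claim is equivalent to: the class $\alpha_l[2n+1]$ in the bottom cell of $P_{2n+1}$ survives to $E_{(r-1)q+1}$ and supports a nonzero differential of length exactly $(r-1)q+1$. To locate that length I would use the finite-stage cohomology computations (\cref{Xinoextension}) and compare the image of $\alpha_l$ under the bottom-cell map in two coskeleta.

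Concretely, the image of $\alpha_l[2n+1]$ in filtration $s$ is the restriction of the class $\alpha_l\in \sphere_{\K(1)}^{2n+2-lq}(\sphere^{2n+1})$ along the cofiber sequence $P_{2n+1}^{s} \to P_{2n+1}^{s'}\to\cdots$. Divisibility by $\lambda^{j}$ means the class lifts to $\sphere_{\K(1)}$-cohomology of the coskeleton $P_{2n+1}^{2n+1+j}$, i.e. it extends over the cells up to dimension $2n+1+j$. So the plan is to show two things.

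First, divisibility: $\alpha_l$ extends over $P_{2n+1}^{2n+1+(r-1)q+1}$. Using the $p$-local splitting, only the summand $(X_i)_{2n+1}^{\infty}$ containing the bottom cell matters, with $i\equiv n+1 \pmod{p-1}$. By \cref{Xinoextension}, $\sphere_{\K(1)}^{m}(P_{2n+1}^{2n+(k+1)q})\cong \mathbb{Z}/p^{\min\{k+1,v_p(a)+1\}}$ for $m=2a$ or $2a+1$. Here the relevant degree is $m=2n+2-lq$, so $a=n+1-lq/2$ and $v_p(a)=v_p(lq-2n-2)=r-1$. For $k+1\ge r$ this group is $\mathbb{Z}/p^{r}$. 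Its restriction to the bottom cell lands in $\pi_{lq-1}\sphere_{\K(1)}$. I would track how the restriction maps change as $k$ grows: they are the maps induced by the $\mathbb{Z}/p^{k+1}$ pieces of $\KU_p$-cohomology. Between coskeleta ending in dimensions $2n+1+(k+1)q$ and $2n+1+kq$, the restriction multiplies by $p$ until the group stabilizes at $\mathbb{Z}/p^r$. The element $\alpha_l$, of order $p$, therefore lies in the image from the $(2n+1+(r-1)q)$-coskeleton, or rather from that coskeleton plus one more cell. The extra cell is an even cell attached by degree $p$, and $p\alpha_l=0$ lets the extension pass over it.

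Second, non-divisibility: $\alpha_l$ does not extend to $P_{2n+1}^{2n+1+rq+1}$. Here I would compare orders. Once the group has stabilized at $\mathbb{Z}/p^r$, its generator restricts on the bottom cell to an element of order $p^r$, namely $\bar\alpha_l$ times a unit when $v_p(l)$ is at most $r-1$. The order-$p$ element $\alpha_l$ is then $p^{r-1}$ times the generator, and it would have to come from a class of filtration $r-1$ lower. This matches the claimed bound. Equivalently, the Toda bracket $\langle \bar\alpha,\alpha_l,p\rangle$ of \cref{Mark-toda} identifies the obstruction to extending over the next odd cell as a nonzero multiple of $\bar\alpha_{l+r}$.

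The main obstacle is making the identification of restriction maps precise, i.e. showing that the cohomology of successive coskeleta restricts to the bottom cell with exactly the expected $p$-power images. I would do this via the $\KU_p$ Adams-operation description in the proof of \cref{Xinoextension}. There the $\sphere_{\K(1)}$-cohomology is the kernel and cokernel of $\psi^g-1$, acting by multiplication by $p^{1+v_p(a)}$ on $\mathbb{Z}/p^{k+1}$, and the restriction maps are induced by the reductions $\mathbb{Z}/p^{k+1}\to\mathbb{Z}/p^{k}$.
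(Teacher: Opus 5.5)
Your setup matches the paper's. Divisibility by $\lambda^j$ means lifting $\alpha_l$ along $\sphere_{\K(1)}^{2a}(P_{2n+1}^{2n+1+j})\to\pi_{lq-1}\sphere_{\K(1)}$ with $2a=2n+2-lq$, and only the summand $(X_i)^{\infty}_{2n+1}$ with $i\equiv n+1$ matters. The mechanism in your last paragraph is exactly the one the paper uses. But the middle of the proposal, which is supposed to carry it out, is wrong in both halves.

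\emph{The translation via \cref{bocksteindiff}.} The Claim says the differential on $\alpha_l[2n+1]$ has length $d$ with $(r-1)q+2\le d\le rq+1$; degree reasons then force $d=rq+1$. A $\lambda^{(r-1)q+1}$-divisible class cannot support $d_{(r-1)q+1}$, so your restatement contradicts the Claim.

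\emph{Divisibility.} Put $G_k=\sphere_{\K(1)}^{2a}(P_{2n+1}^{2n+2+kq})=\sphere_{\K(1)}^{2a}((X_i)_{2n+1}^{2n+2+kq})$. These are the groups of \cref{Xinoextension}; they come from skeleta ending after the even cell $2n+2+kq$, not at $2n+1+kq$ as in your indexing. Odd $\KU_p$-cohomology vanishes, and $\psi^g-1$ acts on $\KU_p^{2a}((X_i)_{2n+1}^{2n+2+kq})\cong\mathbb{Z}/p^{k+1}$ as a unit times $p^r$. Hence $G_k$ is the kernel of $p^r$, and $G_k\to G_{k-1}$ is the reduction $\mathbb{Z}/p^{k+1}\to\mathbb{Z}/p^k$ restricted to these kernels. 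That map is surjective for $k\le r-1$, and it becomes $\times p$ on $\mathbb{Z}/p^r$ only for $k\ge r$. You state this backwards (``the restriction multiplies by $p$ until the group stabilizes''). If that were true, the composite to $G_0$ would vanish and, for $r\ge 2$, $\alpha_l$ would not even be $\lambda^{q+1}$-divisible.

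Your cell-level remark does not rescue the step. The obstruction to extending over the top even cell $e^{2n+2+(r-1)q}$ is the extended class composed with the full attaching map. That is an element of $\pi_{(l+r-1)q-1}\sphere_{\K(1)}$, not $p\alpha_l$. What actually gives divisibility is this: $G_{r-1}\to G_0\cong\mathbb{Z}/p$ is surjective, and $G_0\to\pi_{lq-1}\sphere_{\K(1)}$ is injective with image $\langle\alpha_l\rangle$ (here $l\neq 0$).

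\emph{Non-divisibility.} Your argument rests on a false statement. From any skeleton containing $e^{2n+2}$, restriction to the bottom cell factors through $G_0\cong\mathbb{Z}/p$. So its image lies in $\langle\alpha_l\rangle$, and no class can restrict to an element of order $p^r$ with $r\ge 2$. In particular no class restricts to a unit times $\bar{\alpha}_l$, whose order $p^{v_p(l)+1}$ is unrelated to $r$ anyway. The comparison of orders therefore proves nothing, and ``it would have to come from a class of filtration $r-1$ lower'' has no content here.

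The correct step is that the composite $G_r\cong\mathbb{Z}/p^r\xrightarrow{\times p}G_{r-1}\cong\mathbb{Z}/p^r\twoheadrightarrow G_0$ is zero. Hence $\alpha_l$ does not lift to $P_{2n+1}^{2n+2+rq}$, i.e.\ it is not $\lambda^{rq+1}$-divisible.

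The Toda-bracket aside via \cref{Mark-toda} is not needed for the Claim. It is also misplaced: the eventual target $\bar{\alpha}_{l+r}[2n+2+rq]$ sits on an even cell, not on ``the next odd cell''. Replace your two middle paragraphs with the kernel-of-$p^r$ bookkeeping you sketch at the end, and you obtain the paper's proof.
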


\begin{proof}[Proof of the claim]
Since we are considering the classes in the bottom cell, the map $F_sX_{\star} \rightarrow X_{s}$ in the Atiyah-Hirzebruch filtration is identity. The $\lambda^N$-map is concretely
\begin{equation*}
    \pi_{lq-2n-2} F(P_{2n+1}^{2n+1+N}, \sphere_{\K(1)}) \rightarrow \pi_{lq-2n-2} F(\sphere^{2n+1}, \sphere_{\K(1)})
\end{equation*}
When $N=1$, this map is $\mathbb{Z}/p \rightarrow \mathbb{Z}/p^{v_p(l)+1}$, and only ${\alpha}_l$ is $\lambda$-divisible. By \cref{Xinoextension}, the $\lambda^N$-map has the following concrete description:
\begin{itemize}
    \item For $1 \leq a \leq r-1$, the map is the natural surjection:
    $$\lambda^q \colon \pi_{lq-2n-2} F(P_{2n+1}^{2n+2+aq}, \sphere_{\K(1)}) \cong \mathbb{Z}/p^{a+1} \twoheadrightarrow \pi_{lq-2n-2} F(P_{2n+1}^{2n+2+(a-1)q}, \sphere_{\K(1)}) \cong \mathbb{Z}/p^{a}.$$
    \item For $a \geq r$, the map is as follows:
     $$\lambda^q \colon \pi_{lq-2n-2} F(P_{2n+1}^{2n+2+aq}, \sphere_{\K(1)}) \cong \mathbb{Z}/p^{r} \xrightarrow[]{\times p} \pi_{lq-2n-2} F(P_{2n+1}^{2n+2+(a-1)q}, \sphere_{\K(1)}) \cong \mathbb{Z}/p^{r}.$$
\end{itemize}
The first observation implies that the $\lambda^{(r-1)q}$-map mapping to $F(P_{2n+1}^{2n+2}, \sphere_{\K(1)})$ is a surjection, so the class ${\alpha}_l[2n+1]$ must be $\lambda^{(r-1)q+1}$-divisible. Combining the two observations above, the $\lambda^{rq+1}$-map factors through the zero map as follows:
\begin{equation*}
    \pi_{lq-2n-2} F(P_{2n+1}^{2n+2+rq}) \cong \mathbb{Z}/p^{r} \xrightarrow[]{\times p} \pi_{lq-2n-2}F(P_{2n+1}^{2n+2}) \cong \mathbb{Z}/p,
\end{equation*}
so the class $\alpha_l[2n+1]$ cannot be $\lambda^{rq+1}$-divisible. 
\end{proof}
By the claim above, we see ${\alpha}_l[2n+1]$ cannot have an outgoing differential of length $(r-1)q+1$, and must have a differential before page $rq+1$. For degree reasons, it forces us to have the differential:
\begin{equation*}
    d_{rq+1} ({\alpha}_l[2n+1]) = \ba_{l+r}[2n+rq+2].
\end{equation*}
For the remaining differentials in the second family, we notice that the above proof works for any $n \in \mathbb{Z}$, and in particular $P_{2n+1+k}$. The natural map $P_{2n+1}  \rightarrow P_{2n+1+k}$ induces an injection of the cohomological Atiyah-Hirzebruch spectral sequence on the $E_2$-page, and the differentials on the classes ${\alpha}_l [2n+1+k]$ are obtained by naturality.
\end{proof}

This spectral-sequence formulation provides another way to compute the cohomology groups $\pi_*(P_{2n},\sphere_{\K(1)})$. Most elements supported on the $2n$-cell survive to the $E_{\infty}$-page for degree reasons, so most stems in the cohomological AHSS calculating $\pi_{kq-2n-1}(P_{2n}, \sphere_{\K(1)})$ have the shape
  \begin{center}
   \begin{tikzpicture}[
    dot/.style ={circle, fill=black, inner sep=1.6pt},
    open/.style={circle, draw=black, fill=white,
                 inner sep=1.4pt, line width=1pt},
    every path/.style={draw=black, line width=1pt},
    x=1cm, y=1cm
]

% ---------- LEFT : S^{2n} ----------
\begin{scope}[xshift=0cm]
  \coordinate (Lt1) at (0, 5.0);  \coordinate (Lt2) at (0, 4.3);
  \coordinate (Lt3) at (0, 3.6);  \coordinate (Lt4) at (0, 2.9);
  \coordinate (Lb1) at (0, 0.6);  \coordinate (Lb2) at (0,-0.1);
  \draw (Lt1) -- (Lt2) -- (Lt3) -- (Lt4);
  \draw (Lb1) -- (Lb2);
  \draw (Lt2) .. controls +(2.0,-1.0) and +(2.0, 1.0) .. (Lb1);
  \foreach \p in {Lt1,Lt2,Lt3,Lt4,Lb1,Lb2} \node[dot] at (\p) {};
  \node[anchor=east, font=\large] at (-1.2, 4.65) {$P_{2n+1}$};
    \node[anchor=east, font=\large] at (-1.2, 1)
        {$\sphere^{2n}$};
           \node[anchor=east, font=\small] at (1.5, -1)
        {$v_p(kq) \geq v_p(2n)$};
\end{scope}

% ---------- RIGHT : P_{2n+1} ----------
\begin{scope}[xshift=5.5cm]
  \coordinate (Rt1) at (0, 5.0);  \coordinate (Rt2) at (0, 4.3);
  \coordinate (Rt3) at (0, 3.6);  \coordinate (Rt4) at (0, 2.9);
  \coordinate (Rb1) at (0, 0.6);  \coordinate (Rb2) at (0, -0.1);
  \draw (Rt3) -- (Rt4);
  \draw (Rb1) -- (Rb2); 
  \draw[dashed] (Rt1)-- (Rt2);
  \draw[dashed] (Rt2) -- (Rt3);
  \draw (Rt2) .. controls +(2.2,-1.2) and +(2.2, 1.2) .. (Rb1);
  \foreach \p in {Rt3,Rt4,Rb1,Rb2} \node[dot]  at (\p) {};
  \foreach \p in {Rt1,Rt2}         \node[open] at (\p) {};
        \node[anchor=east, font=\small] at (1.5, -1)
        {$v_p(kq) < v_p(2n)$};
         \node[anchor=east, font=\tiny] at (0, 0.6)
        {$\bar{\alpha}_k[2n]$};
            \node[anchor=east, font=\tiny] at (0, 4.3)
        {${\alpha}_{k+l+1}[2n+(l+1)q]$};
    \node[anchor=east, font=\tiny] at (0, 2.9)
        {${\alpha}_{k+1}[2n+q]$};
\end{scope}
\end{tikzpicture}
\end{center}
The Toda bracket calculation indicates a possible extension as the curved line, and depending on whether the relevant cycle survives, we can determine the extension class. This argument is made precise in the proof of \cref{P2ncalc}.

\bibliographystyle{amsalpha}
\bibliography{ref}

@article{hopkins1994constructions,
  title={Constructions of elements in {P}icard groups},
  author={Hopkins, Michael J and Mahowald, Mark and Sadofsky, Hal},
  journal={Contemporary Mathematics},
  volume={158},
  pages={89--126},
  year={1994},
  publisher={American Mathematical Society}
}

@inproceedings{harris1988stable,
  title={Stable decompositions of classifying spaces of finite abelian p-groups},
  author={Harris, John C and Kuhn, Nicholas J},
  booktitle={Mathematical Proceedings of the Cambridge Philosophical Society},
  volume={103},
  number={3},
  pages={427--449},
  year={1988},
  organization={Cambridge University Press}
}

@article{Balderrama2026C2K1LocalSphere,
  author  = {Balderrama, William},
  title   = {The {$C_2$}-Equivariant {$K(1)$}-Local Sphere},
  journal = {Mathematische Zeitschrift},
  volume  = {312},
  number  = {2},
  eid     = {52},
  year    = {2026},
  doi     = {10.1007/s00209-025-03925-1}
}

@book{bruner2006h,
  title={${H_{\infty}}$-ring spectra and their applications},
  author={Bruner, Robert R and May, J Peter and McClure, James E and Steinberger, Mark},
  volume={1176},
  year={2006},
  publisher={Springer}
}

@article{hou2025c3equivariantstablestems,
      title={${C_3}$-equivariant stable stems}, 
      author={Yueshi Hou and Shangjie Zhang},
      journal={arXiv preprint arXiv:2505.10745},
      year={2025},
      eprint={2505.10745},
      archivePrefix={arXiv},
      primaryClass={math.AT},
}

@article{angelini2025spoke,
  title={Spoke topological Hochschild homology},
  author={Angelini-Knoll, Gabriel and Zou, Foling},
  journal={arXiv preprint arXiv:2512.11338},
  year={2025}
}

@article{gonzalez1996classification,
  title={Classification of the stable homotopy types of stunted lens spaces for an odd prime},
  author={Gonzalez, Jesus},
  journal={Pacific Journal of Mathematics},
  volume={176},
  number={2},
  pages={325--343},
  year={1996},
  publisher={Mathematical Sciences Publishers}
}

@book{Ravenel2003,
  author    = {Douglas C. Ravenel},
  title     = {Complex Cobordism and Stable Homotopy Groups of Spheres},
  edition   = {2},
  series    = {AMS Chelsea Publishing},
  volume     = {347},
  publisher = {American Mathematical Society},
  address   = {Providence, RI},
  year      = {2003},
  isbn       = {9780821829677}
}

@article{HHR17,
  author  = {Hill, Michael A. and Hopkins, Michael J. and Ravenel, Douglas C.},
  title   = {The slice spectral sequence for certain
             {$RO(C_{p^n})$}-graded suspensions of
             {$H\underline{\mathbb{Z}}$}},
  journal = {Bolet{\'i}n de la Sociedad Matem{\'a}tica Mexicana},
  volume  = {23},
  number  = {1},
  pages   = {289--317},
  year    = {2017},
  doi     = {10.1007/s40590-016-0129-3},
  eprint  = {1510.06056},
  archivePrefix = {arXiv},
  primaryClass  = {math.AT}
}

@article{HSW23,
  author  = {Hahn, Jeremy and Senger, Andrew and Wilson, Dylan},
  title   = {Odd primary analogs of real orientations},
  journal = {Geometry \& Topology},
  volume  = {27},
  number  = {1},
  pages   = {87--129},
  year    = {2023},
  doi     = {10.2140/gt.2023.27.87},
  eprint  = {2009.12716},
  archivePrefix = {arXiv},
  primaryClass  = {math.AT}
}

@article{Bousfield1979Localization,
  author  = {A. K. Bousfield},
  title   = {The Localization of Spectra with Respect to Homology},
  journal = {Topology},
  volume  = {18},
  number  = {4},
  pages   = {257--281},
  year    = {1979},
  doi     = {10.1016/0040-9383(79)90018-1}
}

@article{Ravenel1984Localization,
  author  = {Douglas C. Ravenel},
  title   = {Localization with Respect to Certain Periodic Homology Theories},
  journal = {American Journal of Mathematics},
  volume  = {106},
  number  = {2},
  pages   = {351--414},
  year    = {1984},
  doi     = {10.2307/2374308}
}

@article{BehrensCarlisle2025,
  author  = {Behrens, Mark and Carlisle, Jack},
  title   = {Periodic Phenomena in Equivariant Stable Homotopy Theory},
  journal = {The Quarterly Journal of Mathematics},
  volume  = {76},
  number  = {4},
  pages   = {1033--1104},
  year    = {2025},
  doi     = {10.1093/qmath/haaf013}
}

@article{Li2026KUGLocalSphere,
  author        = {Yingxin Li},
  title         = {On the Equivariant ${KU_G}$-Local Sphere for Finite Abelian Groups},
  journal  =   {arXiv preprint arXiv:2605.30285},
  year          = {2026},
  eprint        = {2605.30285},
  archivePrefix = {arXiv},
  primaryClass  = {math.AT}
}

@article{Balderrama2024TotalPowerOperations,
  author  = {William Balderrama},
  title   = {Total Power Operations in Spectral Sequences},
  journal = {Transactions of the American Mathematical Society},
  volume  = {377},
  number  = {7},
  pages   = {4779--4823},
  year    = {2024},
  doi     = {10.1090/tran/9073}
}

@article{NikolausScholze2018,
  author  = {Thomas Nikolaus and Peter Scholze},
  title   = {On Topological Cyclic Homology},
  journal = {Acta Mathematica},
  volume  = {221},
  number  = {2},
  pages   = {203--409},
  year    = {2018},
  doi     = {10.4310/ACTA.2018.v221.n2.a1}
}

@article{guillou2024c_2,
  title={${C_2}$-Equivariant Stable Stems},
  author={Guillou, Bertrand J and Isaksen, Daniel C},
  journal={arXiv preprint arXiv:2404.14627},
  year={2024}
}

@article{carawan2023homotopy,
  title={The homotopy of the ${KU_G}$-local equivariant sphere spectrum},
  author={Carawan, Tanner N and Field, Rebecca and Guillou, Bertrand J and Mehrle, David and Stapleton, Nathaniel J},
  journal={Journal of Homotopy and Related Structures},
  volume={18},
  number={4},
  pages={543--561},
  year={2023},
  publisher={Springer}
}

@article{thompson1990,
  title={The $v_1$-periodic homotopy groups of an unstable sphere at odd primes},
  author={Thompson, Robert D},
  journal={Transactions of the American Mathematical Society},
  volume={319},
  number={2},
  pages={535--559},
  year={1990}
}

@article{BelmontGuillouIsaksen2021Comparison,
  author        = {Belmont, Eva and Guillou, Bertrand J. and Isaksen, Daniel C.},
  title         = {{$C_2$}-equivariant and {$\mathbb{R}$}-motivic stable stems {II}},
  journal       = {Proceedings of the American Mathematical Society},
  volume        = {149},
  number        = {1},
  pages         = {53--61},
  year          = {2021},
  doi           = {10.1090/proc/15167},
  eprint        = {2001.02251},
  archivePrefix = {arXiv},
  primaryClass  = {math.AT},
  url           = {https://arxiv.org/abs/2001.02251}
}

@article{BelmontIsaksen2022RealMotivicStems,
  author        = {Belmont, Eva and Isaksen, Daniel C.},
  title         = {{$\mathbb{R}$}-motivic stable stems},
  journal       = {Journal of Topology},
  volume        = {15},
  number        = {4},
  pages         = {1755--1793},
  year          = {2022},
  doi           = {10.1112/topo.12256},
  eprint        = {2001.03606},
  archivePrefix = {arXiv},
  primaryClass  = {math.AT},
  url           = {https://arxiv.org/abs/2001.03606}
}

\end{document}